\documentclass[12pt]{article}
\usepackage{BOONDOX-cal}
\usepackage{amssymb,a4}
\usepackage{amsmath,amsfonts,amssymb,amsthm, mathrsfs}

\allowdisplaybreaks[4]
\setlength{\topmargin}{0mm}
\setlength{\oddsidemargin}{0mm}
\setlength{\evensidemargin}{0mm}
\setlength{\textheight}{230mm}
\setlength{\textwidth}{160mm}

\newtheorem{thm}{Theorem}[section]
\newtheorem{lem}[thm]{Lemma}
\newtheorem{defi}[thm]{Definition}
\newtheorem{prop}[thm]{Proposition}
\newtheorem{rem}[thm]{Remark}
\newtheorem{cor}[thm]{Corollary}

\begin{document}
\begin{center}
{\Large \bf Affine vertex operator superalgebra $L_{\widehat{osp(1|2)}}(\mathcal{l},0)$ at admissible level}
\end{center}

\begin{center}
{Huaimin Li, Qing Wang
		\footnote{Supported by China NSF grants No.12071385 and No.12161141001.}\\
School of Mathematical Sciences, Xiamen University, Xiamen, China 361005}	
\end{center}

\begin{abstract}
Let $L_{\widehat{osp(1|2)}}(\mathcal{l},0)$ be the simple affine vertex operator superalgebra with admissible level $\mathcal{l}$.
We prove that the category of weak $L_{\widehat{osp(1|2)}}(\mathcal{l},0)$-modules
on which the positive part of $\widehat{osp(1|2)}$ acts locally nilpotent is semisimple.
Then we prove that $\mathbb{Q}$-graded vertex operator superalgebras $(L_{\widehat{osp(1|2)}}(\mathcal{l},0),\omega_\xi)$
with new Virasoro elements $\omega_\xi$ are rational
and the irreducible modules are exactly the admissible modules for $\widehat{osp(1|2)}$,
where $0<\xi<1$ is a rational number.
Furthermore, we determine the Zhu's algebras $A(L_{\widehat{osp(1|2)}}(\mathcal{l},0))$
and their bimodules $A(L(\mathcal{l},\mathcal{j}))$ for $(L_{\widehat{osp(1|2)}}(\mathcal{l},0),\omega_\xi)$,
where $\mathcal{j}$ is the admissible weight.
As an application, we calculate the fusion rules among the irreducible ordinary modules of $(L_{\widehat{osp(1|2)}}(\mathcal{l},0),\omega_\xi)$.
\end{abstract}

\section{Introduction}

Kac and Wakimoto introduced the notion of admissible highest weight representations for affine Lie (super)algebras to study the modular invariant representations of affine Lie (super)algebras in \cite{KW},
then they classified the admissible weights for all affine Lie algebras in \cite{KW2}.
Admissible modules also studied in the context of vertex operator algebras, in \cite{AM},
Adamovi\'{c} and Milas proved the simple affine vertex operator algebra
$L_{\widehat{sl_2}}(\mathcal{l},0)$ at admissible level is rational in the category $\mathcal{O}$,
and conjectured this holds for simple affine vertex operator algebras $L_{\widehat{\mathfrak{g}}}(\mathcal{l},0)$
associated to any simple finite-dimensional Lie algebra $\mathfrak{g}$.
This conjecture has been proved by Arakawa in \cite{A}.
However, simple affine vertex operator algebras $L_{\widehat{\mathfrak{g}}}(\mathcal{l},0)$
at admissible level $\mathcal{l}$ may not be rational if $\mathcal{l}$ is not a positive integer.
In \cite{DLM2}, Dong, Li and Mason showed that $L_{\widehat{sl_2}}(\mathcal{l},0)$ is a
rational $\mathbb{Q}$-graded vertex operator algebra
under a new Virasoro element and that the irreducible modules are exactly the admissible modules for $\widehat{sl_2}$.
Furthermore, these results has been proved by Lin for
$L_{\widehat{\mathfrak{g}}}(\mathcal{l},0)$ associated to any simple finite-dimensional Lie algebra $\mathfrak{g}$ in \cite{Lin}.
For superalgebra case,
Wood classified the irreducible relaxed highest-weight modules for the simple affine vertex operator superalgebra $L_{\widehat{osp(1|2)}}(\mathcal{l},0)$ at admissible level,
and also proved that $L_{\widehat{osp(1|2)}}(\mathcal{l},0)$ is rational in the category $\mathcal{O}$ in \cite{W}.
Gorelik and Serganova later
showed the rationality in the category $\mathcal{O}$ for
$L_{\widehat{osp(1|2n)}}(\mathcal{l},0)$ in \cite{GS}.

Let $\mathfrak{g}={osp}(1|2)=\mbox{span}_\mathbb{C}\{h,e,f,x,y\}$,
where $\{h,e,f,x,y\}$ is the standard Chevalley basis of $\mathfrak{g}$.
In our paper we study the simple affine vertex operator superalgebra $L_{\widehat{\mathfrak{g}}}(\mathcal{l},0)$ with admissible level $\mathcal{l}$.
Let $V_{\widehat{\mathfrak{g}}}(\mathcal{l},\mathbb{C})$ be the universal affine vertex operator superalgebra,
for a $\mathbb{C}h$-module $U$,
we construct the weak $V_{\widehat{\mathfrak{g}}}(\mathcal{l},\mathbb{C})$-module $L(\mathcal{l},U)$ in Subsection \ref{sec:3.1}
and determine the condition that
$L(\mathcal{l},U)$ is a weak $L_{\widehat{\mathfrak{g}}}(\mathcal{l},0)$-module.
We prove that the irreducible highest weight $\widehat{\mathfrak{g}}$-module
$L(\mathcal{l},\mathcal{j})$ is a weak $L_{\widehat{\mathfrak{g}}}(\mathcal{l},0)$-module if and only if $\mathcal{j}$ is admissible,
this result implies that the weak $L_{\widehat{\mathfrak{g}}}(\mathcal{l},0)$-modules in category $\mathcal{O}$ are complete reducible
and the irreducible modules are those $L(\mathcal{l},\mathcal{j})$ for $\mathcal{j}$ being admissible.
This also obtained in \cite{W} by computing explicit presentation for the Zhu's algebra associated to $L_{\widehat{\mathfrak{g}}}(\mathcal{l},0)$
and in \cite{GS} by establishing the correspondence between the weak $L_{\widehat{osp(1|2n)}}(\mathcal{l},0)$-modules
and the weak $L_{\widehat{sp_{2n}}}(\mathcal{l},0)$-modules.
Furthermore, we consider the category $\mathcal{C}_{\mathcal{l}}$ in Subsection \ref{sec:3.2},
where $\mathcal{C}_{\mathcal{l}}$ is the subcategory of the weak $L_{\widehat{\mathfrak{g}}}(\mathcal{l},0)$-module category
such that $M$ is an object in $\mathcal{C}_{\mathcal{l}}$ if and only if the sum of all positive root spaces of
$\widehat{\mathfrak{g}}$ acts locally nilpotently on $M$.
We prove that the category $\mathcal{C}_{\mathcal{l}}$ is semisimple
and there are finitely many irreducible weak
$L_{\widehat{\mathfrak{g}}}(\mathcal{l},0)$-modules belonging to $\mathcal{C}_{\mathcal{l}}$ up to isomorphism.
It implies that any ordinary
$L_{\widehat{\mathfrak{g}}}(\mathcal{l},0)$-module is completely reducible and
there are finitely many irreducible ordinary $L_{\widehat{\mathfrak{g}}}(\mathcal{l},0)$-modules up to isomorphism,
this result also established for $\mathfrak{g}=osp(1|2n)$ in \cite{CGL} by using the theory of vertex superalgebra extensions.
Moreover, we consider $\mathbb{Q}$-graded vertex operator superalgebras $(L_{\widehat{\mathfrak{g}}}(\mathcal{l},0),\omega_\xi)$
associated to a family of new Virasoro elements $\omega_\xi$ in Subsection \ref{sec:3.3}, where $0<\xi<1$ is a rational number.
We apply the semisimplicity of category $\mathcal{C}_{\mathcal{l}}$ to prove that
$(L_{\widehat{\mathfrak{g}}}(\mathcal{l},0),\omega_\xi)$
are rational $\mathbb{Q}$-graded vertex operator superalgebras.
In \cite{DK}, the $A(V)$-theory for $\mathbb{Q}$-graded vertex operator superalgebras has been studied.
In this paper we determine the Zhu's algebras $A(L_{\widehat{\mathfrak{g}}}(\mathcal{l},0))$ and their bimodules $A(L(\mathcal{l},\mathcal{j}))$
for $(L_{\widehat{\mathfrak{g}}}(\mathcal{l},0),\omega_\xi)$ in Subsection \ref{sec:4.2}. Then
we apply the Zhu's algebras and their bimodules associated to $(L_{\widehat{\mathfrak{g}}}(\mathcal{l},0),\omega_\xi)$
to calculate the fusion rules among the irreducible ordinary modules of $(L_{\widehat{\mathfrak{g}}}(\mathcal{l},0),\omega_\xi)$,
the fusion rules we obtained are also those with respect to the old vertex operator superalgebra $L_{\widehat{\mathfrak{g}}}(\mathcal{l},0)$.
The fusion rules among the irreducible relaxed highest-weight modules over $L_{\widehat{\mathfrak{g}}}(\mathcal{l},0)$
have been calculated in \cite{CKLR}
by viewing $L_{\widehat{\mathfrak{g}}}(\mathcal{l},0)$ as extensions of the tensor product of certain
simple Virasoro vertex operator algebra and simple affine vertex operator algebra associated to $sl_2$.

This paper is organized as follows.
In Section \ref{sec:2}, we recall some concepts about $\mathbb{Z}$-graded vertex operator superalgebras and
some facts about admissible modules of $\widetilde{\mathfrak{g}}$.
In Section \ref{sec:3}, we construct the weak $V_{\widehat{\mathfrak{g}}}(\mathcal{l},\mathbb{C})$-module $L(\mathcal{l},U)$
and give the condition for $L(\mathcal{l},U)$ being a weak $L_{\widehat{\mathfrak{g}}}(\mathcal{l},0)$-module.
Then we prove that $L_{\widehat{\mathfrak{g}}}(\mathcal{l},0)$ at admissible level is rational
in category $\mathcal{C}_{\mathcal{l}}$ and $(L_{\widehat{\mathfrak{g}}}(\mathcal{l},0),\omega_\xi)$
are rational $\mathbb{Q}$-graded vertex operator superalgebras with respect to a family of Virasoro elements $\omega_\xi$.
In Section \ref{sec:4}, we determine the Zhu's algebra $A(L_{\widehat{\mathfrak{g}}}(\mathcal{l},0))$ and their bimodules $A(L(\mathcal{l},\mathcal{j}))$
for $(L_{\widehat{\mathfrak{g}}}(\mathcal{l},0),\omega_\xi)$.
As an application, we calculate the fusion rules among the irreducible ordinary modules of $(L_{\widehat{\mathfrak{g}}}(\mathcal{l},0),\omega_\xi)$.
Throughout the paper,
 $\mathbb{Z}$, $\mathbb{N}$, $\mathbb{Z}_+$, $\mathbb{Q}$ and $\mathbb{C}$ are the sets of integers,
nonnegative integers, positive integers, rational numbers and complex numbers respectively.
\section{Preliminaries}
\label{sec:2}
	\def\theequation{2.\arabic{equation}}
	\setcounter{equation}{0}
In this section, we recall some concepts about $\mathbb{Z}$-graded vertex operator superalgebra and
some facts about admissible modules of $\widetilde{{osp}(1|2)}$.
\subsection{Vertex operator superalgebras and their modules}

Let $V=V_{\bar{0}}\oplus V_{\bar{1}}$ be a vector superspace,
the elements in $V_{\bar{0}}$ (resp. $V_{\bar{1}}$) are called {\em even} (resp. {\em odd}).
For any $v\in V_{\bar{i}}$ with $i=0,1$, define $|v|=i$.
First we recall the definitions of $\mathbb{Z}$-graded vertex operator superalgebra
and their various module categories following \cite{DZ2,L}.

\begin{defi}\label{defivosa}
{\em A {\em vertex superalgebra} is a quadruple $(V,\textbf{1},D,Y)$,
where $V=V_{\bar{0}}\oplus V_{\bar{1}}$ is a vector superspace,
$D$ is an endomorphism of $V$, $\textbf{1}$ is a specified even vector called the {\em vacuum vector} of $V$,
and $Y$ is a linear map
\begin{equation*}
\begin{aligned}
Y(\cdot,z):&V\rightarrow (\mbox{End} V)[[z,z^{-1}]]\\
&a\mapsto Y(a,z)=\sum_{n\in\mathbb{Z}}a_n z^{-n-1} ~~(a_n\in\mbox{End} V)
\end{aligned}
\end{equation*}
such that\\
(1) For any $a,b\in V$, $a_nb=0$ for $n$ sufficiently large;\\
(2) $[D, Y(a,z)]=Y(D(a),z)=\frac{d}{dz}Y(a,z)$ for any $a\in V$;\\
(3) $Y(\textbf{1},z)=\mbox{Id}_V$;\\
(4) $Y(a,z)\textbf{1}\in (\mbox{End} V)[[z]]$ and $\mbox{lim}_{z\rightarrow0}Y(a,z)\textbf{1}=a$ for any $a\in V$;\\
(5) For $\mathbb{Z}_2$-homogeneous elements $a,b\in V$, the following {\em Jacobi identity} holds:
\begin{equation*}
\begin{aligned}
  &z_0^{-1}\delta(\frac{z_1-z_2}{z_0})Y(a,z_1)Y(b,z_2)-(-1)^{|a||b|}z_0^{-1}\delta(\frac{z_2-z_1}{-z_0})Y(b,z_2)Y(a,z_1)\\
  &=z_2^{-1}\delta(\frac{z_1-z_0}{z_2})Y(Y(a,z_0)b,z_2).
 \end{aligned}
\end{equation*}

A vertex superalgebra $V$ is called a {\em $\mathbb{Z}$-graded vertex operator superalgebra}
if there is an even vector $\omega$ called the {\em Virasoro element} of $V$ such that\\
(6) Set $Y(\omega,z)=\sum_{n\in\mathbb{Z}}L(n) z^{-n-2}$, for any $m,n\in\mathbb{Z}$,
$$[L(m), L(n)]=(m-n)L(m+n)+\frac{m^{3}-m}{12}\delta_{m+n,0}c,$$
where $c\in\mathbb{C}$ is called the {\em central charge};\\
(7) $L(-1)=D$;\\
(8) $V$ is $\mathbb{Z}$-graded such that $V=\bigoplus_{n\in\mathbb{Z}}V_{(n)}$,
$L(0)\mid_{V_{(n)}}=n \mbox{Id}_{V_{(n)}}$, $\mbox{dim}~V_{(n)}<\infty$ for all $n\in\mathbb{Z}$ and
$V_{(n)}=0$ for $n$ sufficiently small. For $v\in V_{(n)}$,
the {\em conformal weight} of $v$ is defined to be $\mbox{wt}~v=n$.}
\end{defi}

\begin{defi}
{\em  Let $V$ be a $\mathbb{Z}$-graded vertex operator superalgebra.
A {\em weak $V$-module} is a triple $(M,d,Y_M)$ where
$M=M_{\bar{0}}\oplus M_{\bar{1}}$ is a vector supersapce,
$d$ is an endomorphism of $M$
and $Y_M$ is a linear map
\begin{equation*}
\begin{aligned}
Y_M(\cdot,z):&V\rightarrow (\mbox{End} M)[[z,z^{-1}]]\\
&a\mapsto Y_M(a,z)=\sum_{n\in\mathbb{Z}}a_n z^{-n-1} ~~(a_n\in\mbox{End} M)
\end{aligned}
\end{equation*}
such that\\
(1) For any $a\in V,u\in M$, $a_nu=0$ for $n$ sufficiently large;\\
(2) $[d, Y_M(a,z)]=Y_M(D(a),z)=\frac{d}{dz}Y_M(a,z)$ for any $a\in V$;\\
(3) $Y_M(\textbf{1},z)=\mbox{Id}_M$;\\
(4) For $\mathbb{Z}_2$-homogeneous elements $a,b\in V$, the following {\em Jacobi identity} holds:
\begin{equation*}
\begin{aligned}
  &z_0^{-1}\delta(\frac{z_1-z_2}{z_0})Y_M(a,z_1)Y_M(b,z_2)-(-1)^{|a||b|}z_0^{-1}\delta(\frac{z_2-z_1}{-z_0})Y_M(b,z_2)Y_M(a,z_1)\\
  &=z_2^{-1}\delta(\frac{z_1-z_0}{z_2})Y_M(Y(a,z_0)b,z_2).
 \end{aligned}
\end{equation*}

A weak $V$-module $M$ is called an {\em admissible module} ($\mathbb{Z}_+$-graded weak module)
if $M$ has a $\mathbb{Z}_+$-gradation $M=\bigoplus_{n\in\mathbb{Z}_+}M(n)$ such that
\begin{equation*}
  a_mM(n)\subseteq M(\mbox{wt}~a+n-m-1)
\end{equation*}
for any $\mathbb{Z}$-homogeneous element $a\in V, m\in\mathbb{Z}, n\in\mathbb{Z}_+$.

A weak $V$-module $M$ is called an {\em ordinary module}
if $M=\bigoplus_{\lambda\in\mathbb{C}}M_{\lambda}$, where $M_{\lambda}=\{w\in M \mid L(0)w=\lambda w\}$ such that
$\mbox{dim}~M_{\lambda}<\infty$ for all $\lambda\in\mathbb{C}$ and
$M_{\lambda}=0$ for the real part of $\lambda$ sufficiently small.}
\end{defi}

\begin{defi}
{\em Let $V$ be a $\mathbb{Z}$-graded vertex operator superalgebra and $(W_i,Y_i),(W_j,Y_j)$, $(W_k,Y_k)$ be three weak $V$-modules.
An {\em intertwining operator of type $\begin{pmatrix}W_i\\ W_j~~W_k\end{pmatrix}$} is a linear map
\begin{equation*}
\begin{aligned}
\mathcal{Y}(\cdot,z):&W_j\rightarrow (\mbox{Hom} (W_k,W_i))\{z\}\\
&w\mapsto \mathcal{Y}(w,z)=\sum_{n\in\mathbb{Q}}w_n z^{-n-1} ~~(w_n\in\mbox{Hom} (W_k,W_i))
\end{aligned}
\end{equation*}
such that for any $v\in V, w^j\in W_j,w^k\in W_k$, \\
(1) $w_n^jw^k=0$ for $n$ sufficiently large;\\
(2) $\mathcal{Y}(L(-1)w^j,z)=\frac{d}{dz}\mathcal{Y}(w^j,z)$, where $L(-1)$ is the operator acting on $W_j$;\\
(3) for $\mathbb{Z}_2$-homogeneous elements $v, w^j$, the following {\em Jacobi identity} holds for the operators acting on the element $w^k$:
\begin{equation*}
\begin{aligned}
  &z_0^{-1}\delta(\frac{z_1-z_2}{z_0})Y_i(v,z_1)\mathcal{Y}(w^j,z_2)w^k-(-1)^{|v||w^j|}z_0^{-1}\delta(\frac{z_2-z_1}{-z_0})\mathcal{Y}(w^j,z_2)Y_k(v,z_1)w^k\\
  &=z_2^{-1}\delta(\frac{z_1-z_0}{z_2})\mathcal{Y}(Y_j(v,z_0)w^j,z_2)w^k.
 \end{aligned}
\end{equation*}
For any irreducible weak $V$-modules $W_i,W_j,W_k$,
the space of all intertwining operators of type $\begin{pmatrix}W_i\\ W_j~~W_k\end{pmatrix}$ is denoted by
$I_V\begin{pmatrix}W_i\\ W_j~~W_k\end{pmatrix}$.
Let $N_{W_j,W_k}^{W_i}=\mbox{dim}~I_V\begin{pmatrix}W_i\\ W_j~~W_k\end{pmatrix}$.
These integers $N_{W_j,W_k}^{W_i}$ are usually called the fusion rules.}
\end{defi}

\subsection{Admissible modules of $\widetilde{osp(1|2)}$}
\label{sec:2.2}

  Let $\mathfrak{g}={osp}(1|2)$ be the finite dimensional simple Lie superalgebra with the basis $\{h,e,f,x,y\}$
such that the even part $\mathfrak{g}_0=\mbox{span}_{\mathbb{C}}\{h,e,f\}\cong sl_2$ and
the odd part $\mathfrak{g}_1=\mbox{span}_{\mathbb{C}}\{x,y\}$.
The anticommutation relations are given by
\begin{equation}\nonumber
\begin{aligned}
&[e,f]=h,~[h,e]=2e,~[h,f]=-2f,\\
&[h,x]=x,~[e,x]=0,~[f,x]=-y,\\
&[h,y]=-y,~[e,y]=-x,~[f,y]=0,\\
&\{x,x\}=2e,\{x,y\}=h,\{y,y\}=-2f.
\end{aligned}
\end{equation}
The invariant nondegenerate even supersymmetric bilinear form $(\cdot,\cdot)$ on $\mathfrak{g}$ such that non-trivial products are given by
\begin{equation}\nonumber
(e,f)=(f,e)=1,(h,h)=2,(x,y)=-(y,x)=2.
\end{equation}

Let $\tilde{\mathfrak{g}}=\mathfrak{g}\otimes\mathbb{C}[t,t^{-1}]\oplus\mathbb{C}k$ be the affine Lie superalgebra of $osp(1|2)$
with the $\mathbb{Z}_2$-gradation
\begin{equation}\nonumber
|a\otimes t^n|=|a|~(a\in\mathfrak{g}),~|k|=0,
\end{equation}
and anticommutation relations
\begin{equation}\nonumber
[a\otimes t^n,b\otimes t^m]=[a,b]\otimes t^{m+n}+m\delta_{m+n,0}(a,b)k~(a,b\in\mathfrak{g},m,n\in\mathbb{Z}),~[k,\tilde{\mathfrak{g}}]=0.
\end{equation}
We identify $\mathfrak{g}$ with $\mathfrak{g}\otimes t^0$ and
set $a(n)=a\otimes t^n$ for $a\in\mathfrak{g}$ and $n\in\mathbb{Z}$ for convenience.
Define subalgebras
\begin{equation}
\begin{aligned}
&N_+=\mathbb{C}e\oplus\mathbb{C}x\oplus\mathfrak{g}\otimes t\mathbb{C}[t],\\
&N_-=\mathbb{C}f\oplus\mathbb{C}y\oplus\mathfrak{g}\otimes t^{-1}\mathbb{C}[t^{-1}],\\
&B=N_+\oplus\mathbb{C}h\oplus\mathbb{C}k,~P=\mathfrak{g}\otimes\mathbb{C}[t]\oplus\mathbb{C}k.
\end{aligned}
\end{equation}

Let $\hat{\mathfrak{g}}=\tilde{\mathfrak{g}}\oplus\mathbb{C}d$ be the extended affine Lie superalgebra with
$|d|=0$ and
\begin{equation}\nonumber
[d,a\otimes t^n]=na\otimes t^n~(a\in\mathfrak{g},n\in\mathbb{Z}),~[d,k]=0.
\end{equation}
Let $H=\mathbb{C}h\oplus\mathbb{C}k\oplus\mathbb{C}d$ be the Cartan subalgebra of $\hat{\mathfrak{g}}$.
For any $\lambda\in H^*$,
denote by $M(\lambda)$(resp. $L(\lambda)$) the Verma (resp. the irreducible highest weight) $\hat{\mathfrak{g}}$-module.
It is clear that $L(\lambda)$ is an irreducible $\tilde{\mathfrak{g}}$-module,
denote by $L(\mathcal{l},\mathcal{j})$ the $\tilde{\mathfrak{g}}$-module $L(\lambda)$, where $\mathcal{l}=\langle\lambda,k\rangle,\mathcal{j}=\langle\lambda,h\rangle$.
Conversely, for any restricted $\tilde{\mathfrak{g}}$-module $M$ of level $\mathcal{l}\neq-\frac{3}{2}$,
by Sugawara construction we can extend $M$ to a $\hat{\mathfrak{g}}$-module by letting $d$ act on $M$ as $-L(0)$.
In this paper we shall consider any restricted $\tilde{\mathfrak{g}}$-module as a $\hat{\mathfrak{g}}$-module in this way.

Let $\mathcal{l}\in\mathbb{C}$ and $U$ be a $\mathbb{C}h$-module,
$U$ can be regarded as a $B$-module by setting $N_+$ acting trivially and $k$ acting as $\mathcal{l}$,
let $M(\mathcal{l},U)=U(\tilde{\mathfrak{g}})\otimes_{U(B)}U$.
If $U=\mathbb{C}$ is a one-dimensional $\mathbb{C}h$-module on which $h$ acts as a fixed complex number $\mathcal{j}$,
the corresponding module is a Verma module denoted by $M(\mathcal{l},\mathcal{j})$.
Then $M(\mathcal{l},\mathcal{j})$ has a unique maximal submodule and $L(\mathcal{l},\mathcal{j})$ is isomorphic to the corresponding irreducible quotient.
Similarly we can define the generalized Verma $\tilde{\mathfrak{g}}$-module $V(\mathcal{l},U)=U(\tilde{\mathfrak{g}})\otimes_{U(P)}U$
for any $\mathfrak{g}$-module $U$ which can be extend to a $P$-module by setting $\mathfrak{g}\otimes t\mathbb{C}[t]$
acting trivially and $k$ acting as $\mathcal{l}$.
Note that if $U=\mathbb{C}$ is the trivial $\mathfrak{g}$-module,
then $V(\mathcal{l},\mathbb{C})$ is a quotient of $M(\mathcal{l},0)$
and $L(\mathcal{l},0)$ is isomorphic to the irreducible quotient of $V(\mathcal{l},\mathbb{C})$.

We recall the following reducibility criterion in \cite{IK} which generalized the Kac-Kazhdan reducibility criterion \cite{KK}.
\begin{prop}
{\rm\cite{IK}}. The Verma module $M(\mathcal{l},\mathcal{j})(\mathcal{l}\neq-\frac{3}{2})$ is reducible if and only if
$$\mathcal{j}=\frac{m-1}{2}-s(\mathcal{l}+\frac{3}{2}),$$
where $m,s\in\mathbb{Z}$ such that $m+s\in2\mathbb{Z}+1$ and either $m>0,s\geq0$ or $m<0,s<0$.
\end{prop}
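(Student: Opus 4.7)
The plan is to specialize the generalized Kac--Kazhdan reducibility criterion for Verma modules over affine Lie superalgebras established in \cite{IK} to $\widetilde{\mathfrak{g}}=\widetilde{osp(1|2)}$. That criterion parameterizes reducibility at non-critical level via singular vectors controlled by the positive roots $\beta$ together with a positive integer $r$ through the equation $2(\Lambda+\hat\rho,\beta)=r(\beta,\beta)$, subject to a parity restriction on $r$ when $\beta$ is a non-isotropic odd real root.

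First I would fix the affine root data of $\widetilde{osp(1|2)}$. The finite root system is $\{\pm\alpha,\pm 2\alpha\}$ with $\alpha$ odd non-isotropic and $2\alpha$ even; the normalization $(h,h)=2$ forces $(\alpha,\alpha)=1/2$ and $(2\alpha,2\alpha)=2$. The dual Coxeter number equals $3/2$, matching the critical level $-\tfrac{3}{2}$ excluded by hypothesis. Using $\hat\rho=\tfrac{1}{2}\alpha+\tfrac{3}{2}\Lambda_0$, the relevant pairings are $(\Lambda+\hat\rho,\delta)=\mathcal{l}+\tfrac{3}{2}$ and $(\Lambda+\hat\rho,2\alpha)=\mathcal{j}+1$. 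The positive real affine roots split into even roots $\pm 2\alpha+n\delta$ and odd roots $\pm\alpha+n\delta$ with the appropriate ranges of $n$, while imaginary roots $n\delta$ ($n\ge 1$) produce no reducibility away from the critical level.

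Next I would write the Kac--Kazhdan equation on each family and collect the solutions. For the even roots $\pm 2\alpha+n\delta$ the equation becomes $\pm(\mathcal{j}+1)+n(\mathcal{l}+\tfrac{3}{2})=r$ with $r\in\mathbb{Z}_+$; for the odd roots $\pm\alpha+n\delta$ it becomes an analogous linear equation scaled by $(\beta,\beta)=1/2$ and with $r$ restricted to odd values. Introducing integer parameters $m$ and $s$ to absorb $r$ and $n$ with signs, and eliminating $r$, collapses the four families into the single equation $\mathcal{j}=\tfrac{m-1}{2}-s(\mathcal{l}+\tfrac{3}{2})$; the parity condition $m+s\in 2\mathbb{Z}+1$ records the odd-$r$ constraint from the odd roots combined with the shift from $\hat\rho$, and the case split $m>0,\,s\ge 0$ versus $m<0,\,s<0$ corresponds to whether the positive affine root has positive or negative finite part.

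The main obstacle is the bookkeeping of these four root families: verifying that the union of reducibility loci from even and odd positive real roots, with all allowed signs of the finite part and all admissible $n$, is exactly the set described in the statement with neither duplication nor missed points. In particular, the clean identification of the parity constraint $m+s\in 2\mathbb{Z}+1$ as the shadow of both the odd-root parity condition and the $\hat\rho$-shift requires care and is the key nontrivial combinatorial step.
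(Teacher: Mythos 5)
The paper does not prove this proposition; it is quoted verbatim from \cite{IK}, so your task is really to reconstruct the Iohara--Koga specialization of the Kac--Kazhdan criterion, and your overall strategy (run the Shapovalov-determinant/Jantzen criterion over the positive real roots of $\widetilde{osp(1|2)}$) is the right one. However, as written your bookkeeping of the even-root family would produce the wrong answer. If you include \emph{every} positive even real root $\pm 2\alpha+n\delta$ with \emph{every} $r\in\mathbb{Z}_+$, the equation $2(\Lambda+\hat\rho,\pm 2\alpha+n\delta)=2r$ yields $\mathcal{j}=\frac{m-1}{2}-s(\mathcal{l}+\frac{3}{2})$ with $m=\pm 2r$ even and $s=\pm n$ of \emph{arbitrary} parity, hence also points with $m+s$ even. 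For instance $\beta=2\alpha$, $r=1$ gives $\mathcal{j}=\frac12$, and $M(\mathcal{l},\frac12)$ is \emph{irreducible} for generic $\mathcal{l}$ (already the finite $osp(1|2)$ Verma module of highest weight $\frac12$ is irreducible: one computes $xy^{2k+1}v=(\mathcal{j}-k)y^{2k}v$ and $xy^{2k}v=-ky^{2k-1}v$, so reducibility in the finite case occurs exactly at $\mathcal{j}\in\mathbb{Z}_{\geq 0}$). The missing ingredient is the standard convention in the determinant formula for Lie superalgebras: even roots $\beta$ such that $\frac12\beta$ is an (odd) root are \emph{omitted} from the even-root product. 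Here this removes $\pm 2\alpha+n\delta$ for all even $n$ (including $2\alpha$ itself), and only $n$ odd survives; that is precisely what forces $m$ even $\Rightarrow$ $s$ odd. So the parity condition $m+s\in 2\mathbb{Z}+1$ is \emph{not}, as you say, just ``the shadow of the odd-$r$ constraint combined with the $\hat\rho$-shift'': for the odd roots it comes from ($m=\pm r$ odd, $s=\pm 2n$ even), but for the even roots it comes from this exclusion, and without it the criterion is false.

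There is also an arithmetic slip that would propagate: with your own normalization $\hat\rho=\frac12\alpha+\frac32\Lambda_0$ and $(\alpha,\alpha)=\frac12$, one has $(\hat\rho,2\alpha)=(\tfrac12\alpha,2\alpha)=(\alpha,\alpha)=\tfrac12$, so $(\Lambda+\hat\rho,2\alpha)=\mathcal{j}+\tfrac12$, not $\mathcal{j}+1$. (The value $\mathcal{j}+1$ is what you would get from the purely even half-sum $\rho_{\bar 0}=\alpha$, forgetting the odd correction $-\frac12\rho_{\bar 1}$.) With $\mathcal{j}+\tfrac12$ the four families collapse correctly: odd roots $\pm\alpha+n\delta$ with $r$ odd give $m=\pm r$ odd, $s=\pm 2n$ even; even roots $\pm 2\alpha+n\delta$ with $n$ odd give $m=\pm 2r$ even, $s=\pm n$ odd; imaginary roots contribute nothing away from $\mathcal{l}=-\frac32$; and the sign dichotomy is, as you say, governed by the sign of the finite part of the positive root. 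Fix these two points and the argument goes through.
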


In \cite{KW}, Kac and Wakimoto gave the definition of admissible weight for Kac-Moody (super)algebras
and charactered the admissible weight of $\hat{\mathfrak{g}}$ as an example.
There is an equivalent characterization of the admissible weight of $\hat{\mathfrak{g}}$ in \cite{IK}.

\begin{prop}\label{propadm}
{\rm\cite{IK}}. For any $\lambda\in H^*$, let $\mathcal{l}=\langle\lambda,k\rangle,\mathcal{j}=\langle\lambda,h\rangle$.\\
{\rm (1)} $\mathcal{l}$ is an admissible level if and only if $\mathcal{l}+\frac{3}{2}=\frac{p}{2q}$,
where $p,q$ are positive integers such that $p\geq2, p\equiv q(\mbox{mod}~2)$ and $(\frac{p-q}{2},q)=1$.\\
{\rm (2)} For an admissible level $\mathcal{l}$, $\mathcal{j}$ is an admissible weight if and only if
$\mathcal{j}=\frac{m-1}{2}-ls$, where $l=\mathcal{l}+\frac{3}{2}=\frac{p}{2q},m,s\in \mathbb{Z}$ such that $m+s\equiv1(\mbox{mod}~2)$ and $1\leq m\leq p-1, 0\leq s \leq q-1$.
\end{prop}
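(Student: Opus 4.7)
The plan is to derive both parts from the original Kac--Wakimoto definition of admissible weight specialized to $\widehat{\mathfrak{g}}=\widehat{osp(1|2)}$, and then to cross-check the resulting arithmetic conditions against the reducibility criterion for $M(\mathcal{l},\mathcal{j})$ recalled in the preceding proposition. Recall that $\lambda\in H^{*}$ is admissible if (i) $\langle\lambda+\hat{\rho},\alpha^{\vee}\rangle\notin-\mathbb{Z}_{+}\cup\{0\}$ for every positive real coroot $\alpha^{\vee}$, and (ii) the set of real integral coroots $\{\alpha^{\vee}:\langle\lambda+\hat{\rho},\alpha^{\vee}\rangle\in\mathbb{Z}\}$ spans a sublattice of full $\mathbb{Q}$-rank in the real affine root lattice.

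The first step is to write down the positive real roots of $\widehat{osp(1|2)}$ explicitly: the even ones are $n\delta\pm 2\alpha$ (for $n\geq 1$ and $n\geq 0$ respectively) and the odd ones are $n\delta\pm\alpha$ in the analogous ranges, where $\alpha$ denotes the odd simple root of $\mathfrak{g}$ and $\delta$ is the null root. Computing the pairing with $\lambda+\hat{\rho}$ in terms of $\mathcal{l}$, $\mathcal{j}$, and $n$ turns conditions (i) and (ii) into a system of inequalities and integrality requirements linear in $\mathcal{l}+3/2$ and $\mathcal{j}$. The integrality requirements force $\mathcal{l}+3/2\in\mathbb{Q}_{>0}$; writing this in lowest terms as $p/(2q)$, fullness of the integral root sublattice translates into $p\equiv q\pmod{2}$ together with $\gcd((p-q)/2,q)=1$, while regular dominance on the odd coroots excludes $p=1$, yielding part (1). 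For part (2), integrality along $n\delta\pm 2\alpha$ and $n\delta\pm\alpha$ restricts $\mathcal{j}$ to the discrete set $\{(m-1)/2-ls:m,s\in\mathbb{Z}\}$ with $l=\mathcal{l}+3/2$, and then regular dominance restricts $m,s$ to $1\leq m\leq p-1$ and $0\leq s\leq q-1$; the parity $m+s\equiv 1\pmod{2}$ is extracted from the compatibility between the even and odd coroot conditions, reflecting the $\mathbb{Z}_{2}$-grading of $\mathfrak{g}$.

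The main obstacle will be the bookkeeping of the mixed even/odd integrality conditions — specifically, isolating the parity constraints $p\equiv q\pmod{2}$ and $m+s\equiv 1\pmod{2}$, which are the new features distinguishing the $osp(1|2)$ case from the $\widehat{sl_{2}}$ case of Kac--Wakimoto. A cleaner alternative, which I would pursue in practice, is to bypass the direct root-theoretic enumeration and argue from the reducibility criterion stated in the preceding proposition: admissibility of $\lambda$ forces $M(\mathcal{l},\mathcal{j})$ to possess a maximally symmetric family of singular vectors attached to positive real coroots, and matching this family against the parameterization $\mathcal{j}=(m-1)/2-s(\mathcal{l}+3/2)$ directly produces the stated ranges for $m$ and $s$ together with the parity constraint $m+s\in 2\mathbb{Z}+1$ already present in that criterion.
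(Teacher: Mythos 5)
This proposition is not proved in the paper at all: it is quoted from Iohara--Koga \cite{IK} (see also the Kac--Wakimoto characterization in \cite{KW}), so there is no internal argument to compare yours against. Judged on its own terms, your outline follows the right general strategy (specialize the Kac--Wakimoto admissibility conditions to the real roots $n\delta\pm\alpha$, $n\delta\pm2\alpha$ of $\widehat{osp(1|2)}$), but it contains one concrete error and leaves the genuinely hard steps as assertions. The error is the normalization ``writing $\mathcal{l}+\tfrac{3}{2}$ in lowest terms as $p/(2q)$'': the parametrization in the proposition is \emph{not} a lowest-terms representation. For example $\mathcal{l}=-\tfrac{1}{2}$ is admissible with $l=1=\tfrac{4}{2\cdot 2}$, i.e.\ $p=4$, $q=2$, and here $(\tfrac{p-q}{2},q)=(1,2)=1$ while $\gcd(p,2q)=4$. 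If you insist on lowest terms you force $p$ odd and $\gcd(p,q)=1$, which (with $p\equiv q\ (\mathrm{mod}\ 2)$) captures only the ``both odd'' branch and discards the entire family of admissible levels with $p,q$ even. Relatedly, the conditions $p\equiv q\ (\mathrm{mod}\ 2)$, $(\tfrac{p-q}{2},q)=1$ and $m+s\equiv 1\ (\mathrm{mod}\ 2)$ are exactly the super-specific content of the proposition (they come from the fact that the odd roots $\beta=n\delta\pm\alpha$ are non-isotropic with $2\beta$ again a root, so the integral coroot lattice mixes the two root lengths); your write-up says these conditions ``translate into'' or are ``extracted from'' the fullness and compatibility requirements without performing the computation, which is precisely where a proof would live or die.

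Your proposed ``cleaner alternative'' via the reducibility criterion (Proposition 2.1) is not logically sound as stated. Admissibility is defined root-theoretically (regular dominance plus fullness of the integral coroot sublattice), not by the existence of ``a maximally symmetric family of singular vectors'' in $M(\mathcal{l},\mathcal{j})$; the reducibility criterion only tells you for which $(\mathcal{l},\mathcal{j})$ the Verma module has \emph{some} singular vector. To match singular vectors against admissibility you would first need the root-theoretic classification you are trying to establish, so this route is circular. If you want to complete the argument, you should carry out the first route in full: list $\hat\Delta^{re}_+$, compute $\langle\lambda+\hat\rho,\alpha^\vee\rangle$ for each family of coroots in terms of $l$, $\mathcal{j}$, $n$, determine when the set of integral coroots has full rank (this is where both the ``$p,q$ odd'' and ``$p,q$ even'' branches and the condition $(\tfrac{p-q}{2},q)=1$ appear), and then impose regular dominance to cut $m$ and $s$ to the stated ranges with the parity constraint.
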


From now on we will assume that $l=\mathcal{l}+\frac{3}{2}=\frac{p}{2q}$, where $p,q$ are positive integers such that $p\geq2, p\equiv q(\mbox{mod}~2)$ and $(\frac{p-q}{2},q)=1$.

\begin{rem}\label{remj}
{\rm Let $\mathcal{j}=\frac{m-1}{2}-ls=\frac{m^\prime-1}{2}-ls^\prime$ be an admissible weight such that
\begin{equation}\nonumber
\begin{aligned}
&m,m^\prime,s,s^\prime\in \mathbb{Z},1\leq m\leq p-1, 1\leq m^\prime\leq p-1,0\leq s^\prime\leq s \leq q-1,\\
&m+s\equiv1(\mbox{mod}~2),m^\prime+s^\prime\equiv1(\mbox{mod}~2).
\end{aligned}
\end{equation}
Suppose $s>s^\prime$, then $\frac{m-m^\prime}{s-s^\prime}=\frac{p}{q}$.
Since $p\equiv q(\mbox{mod}~2)$, we have $p,q\in2\mathbb{Z}$ or $p,q\in2\mathbb{Z}+1$.
If $p,q\in2\mathbb{Z}+1$, by $(\frac{p-q}{2},q)=1$ we have $(p,q)=1$, it is a contradiction.
If $p,q\in2\mathbb{Z}$, by $(\frac{p-q}{2},q)=1$ we have $s-s^\prime=\frac{1}{2}q, m-m^\prime=\frac{1}{2}p$,
then $(m-m^\prime, s-s^\prime)=1$ and $m-m^\prime+s-s^\prime\equiv1(\mbox{mod}~2)$,
it contradicts to $m+s\equiv1(\mbox{mod}~2)$ and $m^\prime+s^\prime\equiv1(\mbox{mod}~2)$.
Hence $s=s^\prime, m=m^\prime$, it shows that the expression $\mathcal{j}=\frac{m-1}{2}-ls$ of an admissible weight $\mathcal{j}$ with
$m,s\in \mathbb{Z}, m+s\equiv1(\mbox{mod}~2)$ and $1\leq m\leq p-1, 0\leq s \leq q-1$ is unique.}
\end{rem}

A vector $w$ in a highest weight module $M$ for $\hat{\mathfrak{g}}$ is called a {\em singular vector} if
$w$ as a highest weight vector generates a proper submodule of $M$.
Note that $l=\frac{p}{2q}$ and $\mathcal{j}=\frac{m-1}{2}-ls=\frac{m-p-1}{2}-l(s-q)$, then from Remark \ref{remj},
together with the proof of Theorem 4.2 in \cite{IK} and Corollary 1 in \cite{KW},
we have the following proposition.

\begin{prop}\label{propsv}
Let $\mathcal{j}=\frac{m-1}{2}-ls$, where $m,s\in \mathbb{Z}$ with $m+s\equiv1(\mbox{mod}~2)$ and $1\leq m\leq p-1, 0\leq s \leq q-1$
and let $v$ be a highest weight vector which generates the Verma module $M(\mathcal{l},\mathcal{j})$. Set
\begin{equation}
\begin{aligned}
&\begin{aligned}F_1(m,s)=&y(0)(y(0)^2)^{\frac{m-1}{2}+ls}e(-1)^{\frac{m}{2}+l(s-1)}y(0)(y(0)^2)^{\frac{m-1}{2}+l(s-2)}\cdots\\
 &e(-1)^{\frac{m}{2}-l(s-1)}y(0)(y(0)^2)^{\frac{m-1}{2}-ls},
 \end{aligned}\\
&\begin{aligned}
F_2(m,s)=&e(-1)^{\frac{p-m}{2}+l(q-s-1)}y(0)(y(0)^2)^{\frac{p-m-1}{2}+l(q-s-2)}e(-1)^{\frac{p-m}{2}+l(q-s-3)}\cdots\\ &y(0)(y(0)^2)^{\frac{p-m-1}{2}-l(q-s-2)}e(-1)^{\frac{p-m}{2}-l(q-s-1)}.
\end{aligned}
\end{aligned}
\end{equation}
Then $v_{j,1}=F_1(m,s)v, v_{j,2}=F_2(m,s)v$ are singular vectors of $M(\mathcal{l},\mathcal{j})$.
Moreover, the maximal proper submodule of $M(\mathcal{l},\mathcal{j})$ is generated by $v_{\mathcal{j},1}$ and $v_{\mathcal{j},2}$.
\end{prop}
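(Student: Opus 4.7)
The plan is to combine the reducibility criterion from \cite{IK} and the explicit MFF-type singular vector formulas of \cite{IK,KW} with the uniqueness recorded in Remark~\ref{remj}. First, I would observe that our admissible weight $\mathcal{j}$ admits exactly two presentations of the form required by the reducibility criterion. The given one $\mathcal{j}=\frac{m-1}{2}-ls$ has $m>0$, $s\geq 0$; using the identity $\frac{p}{2}=lq$ we rewrite $\mathcal{j}=\frac{(m-p)-1}{2}-l(s-q)$, where now $m-p<0$ and $s-q<0$. Since $p\equiv q\pmod 2$, the parity condition $m+s\equiv 1\pmod 2$ is preserved under the shift. Any further presentation with parameters in the reducibility range would contradict the uniqueness established in Remark~\ref{remj}.

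Second, I would apply the Malikov--Feigin--Fuchs-type construction from the proof of Theorem 4.2 in \cite{IK} to each of these two pairs. The pair $(m,s)$ produces the singular vector $v_{\mathcal{j},1}=F_1(m,s)v$, and the pair $(m-p,\,s-q)$ produces $v_{\mathcal{j},2}=F_2(m,s)v$; the alternating exponents of $y(0)$ and $e(-1)$ in the definitions of $F_1$ and $F_2$ are exactly those prescribed by the MFF recipe at each step of the reflection chain within the affine Weyl orbit of $\mathcal{j}$. The vanishing of $N_+$ on $v_{\mathcal{j},i}$ is built into the MFF construction and may therefore be quoted directly.

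Third, the maximality claim follows by combining the above with Corollary~1 of \cite{KW}: that result identifies the maximal proper submodule of $M(\mathcal{l},\mathcal{j})$ at admissible level as the one generated by the primitive singular vectors associated with the reducibility data; the first step of the plan shows that there are exactly two such primitive vectors, namely $v_{\mathcal{j},1}$ and $v_{\mathcal{j},2}$.

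The principal obstacle is making rigorous sense of $F_1$ and $F_2$: their formal exponents, of the shape $\frac{m-1}{2}+ls'$ and $\frac{m}{2}+ls'$, are generically non-integers. This is the familiar MFF fractional-power phenomenon, whose resolution is the technical heart of \cite{IK}: after using the commutation relations to reorder the factors and applying the resulting expression to the highest weight vector $v$, the non-integer exponents cancel and one is left with a genuine element of $U(\widetilde{\mathfrak{g}})v$. Rather than reproducing this verification, I would invoke it directly from \cite{IK}, since both the formulas and the admissible weight combinatorics are essentially parallel to the ones used there.
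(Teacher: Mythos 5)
Your proposal follows essentially the same route as the paper, which itself gives no independent proof but derives the proposition from the two presentations $\mathcal{j}=\frac{m-1}{2}-ls=\frac{m-p-1}{2}-l(s-q)$, the uniqueness in Remark~\ref{remj}, the MFF-type construction in the proof of Theorem 4.2 of \cite{IK}, and Corollary 1 of \cite{KW}. Your additional remarks on the parity check under the shift and on the fractional-exponent issue (resolved via \cite{MFF}/\cite{IK}) are consistent with what the paper implicitly relies on.
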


We regard $(y(0)^2)^{\gamma}$ as an even element for any $\gamma\in\mathbb{C}$.
It follows from \cite{MFF} that $F_1(m,s)$ and $F_2(m,s)$ make sense as an element of $U(\tilde{\mathfrak{g}})$, and we have
\begin{equation}
  a^\gamma b=ba^\gamma+\sum_{j>0}\begin{pmatrix}\gamma\\j\end{pmatrix}(\mbox{ad}a)^j(b)a^{\gamma-j}
\end{equation}
for $a\in\tilde{\mathfrak{g}}_{\bar{0}},b\in U(\tilde{\mathfrak{g}}),\gamma\in\mathbb{C}$.

\begin{rem}\label{remj=0}
{\rm For $\mathcal{j}=0$, from Remark \ref{remj}, we have $m=1,s=0$.
By Proposition \ref{propsv}, $v_{0,1}=F_1(1,0)\textbf{1}, v_{0,2}=F_2(1,0)\textbf{1}$ are singular vectors of $M(\mathcal{l},0)$.
Since $v_{0,1}=F_1(1,0)\textbf{1}=0$ in $V(\mathcal{l},\mathbb{C})$, we have $v_{0,2}=F_2(1,0)\textbf{1}$ generates the maximal proper submodule of $V(\mathcal{l},\mathbb{C})$.}
\end{rem}

Set $P(\alpha)=xy+\alpha$ and $Q(\alpha)=yx-\alpha$ for $\alpha\in\mathbb{C}$.
We have the following commutation relations in $U(\mathfrak{g})$ (cf. \cite{IK}).

\begin{prop}\label{proppai1}
For any $\alpha,\beta,\gamma\in\mathbb{C}$, we have
\begin{equation}
\begin{aligned}
&[P(\alpha),P(\beta)]=0,[P(\alpha),Q(\beta)]=0,[Q(\alpha),Q(\beta)]=0,\\
&e^\gamma P(\alpha)=P(\alpha-\gamma)e^\gamma, e^\gamma Q(\alpha)=Q(\alpha+\gamma)e^\gamma,\\
&f^\gamma P(\alpha)=P(\alpha+\gamma)f^\gamma, f^\gamma Q(\alpha)=P(\alpha-\gamma)f^\gamma,\\
&xP(\alpha)=Q(1-\alpha)x, xQ(\alpha)=P(-\alpha)x,
yP(\alpha)=Q(-\alpha)y, yQ(\alpha)=P(1-\alpha)y,\\
&e^\gamma y=Q(\gamma)xe^{\gamma-1},xe^\gamma y=P(-\gamma)e^\gamma,
f^\gamma x=-P(\gamma)yf^{\gamma-1},yf^\gamma x=Q(-\gamma)f^\gamma.
\end{aligned}
\end{equation}
\end{prop}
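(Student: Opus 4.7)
The plan is to verify each of the stated identities by a short direct computation using four ingredients: the defining brackets of $osp(1|2)$; the consequences $x^{2}=e$ and $y^{2}=-f$ of $\{x,x\}=2e$ and $\{y,y\}=-2f$; the observation $P(\alpha)+Q(\alpha)=\{x,y\}=h$; and the MFF-type commutation formula displayed immediately before the statement. I would organize the relations into four blocks: (i) the three vanishing commutators on the first line; (ii) the four identities moving $e^{\gamma}$ or $f^{\gamma}$ past $P(\alpha)$ or $Q(\alpha)$; (iii) the four identities moving $x$ or $y$ past $P(\alpha)$ or $Q(\alpha)$; and (iv) the four mixed identities on the last two lines. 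The structural reason the MFF sum always terminates is that $(\mathrm{ad}\,e)$ and $(\mathrm{ad}\,f)$ are nilpotent of order $\le 2$ on each of $x$, $y$, $xy$ and $yx$.

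For (i), $[P(\alpha),P(\beta)]=[xy,xy]=0$ and $[Q(\alpha),Q(\beta)]=[yx,yx]=0$ are trivial, while the cross commutator reduces to $[xy,yx]$; expanding $xy\cdot yx=-xfx=-ef+xy$ and $yx\cdot xy=yey=-fe-yx$ via $y^{2}=-f$, $x^{2}=e$, $[f,x]=-y$, $[e,y]=-x$ and subtracting gives $-[e,f]+\{x,y\}=-h+h=0$. For (ii), I would compute once and for all that $(\mathrm{ad}\,e)(xy)=(\mathrm{ad}\,e)(yx)=-e$ and $(\mathrm{ad}\,f)(xy)=(\mathrm{ad}\,f)(yx)=f$; the MFF formula then collapses to a single correction term and yields the shifts $\alpha\mapsto\alpha\mp\gamma$ recorded in the statement. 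For (iii), I would expand each side and compare, e.g.\ $xP(\alpha)=x^{2}y+\alpha x=ey+\alpha x$ versus $Q(1-\alpha)x=yx^{2}+(\alpha-1)x=ye+(\alpha-1)x$, and the equality follows from $ey-ye=[e,y]=-x$; the remaining three identities are handled by the analogous single-step reshuffle using $[e,y]=-x$, $[f,x]=-y$, $x^{2}=e$ and $y^{2}=-f$.

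For (iv) I would combine the previous work. MFF applied to $[e,y]=-x$ gives $e^{\gamma}y=ye^{\gamma}-\gamma xe^{\gamma-1}$, and rewriting $ye^{\gamma}=yx^{2}e^{\gamma-1}$ via $x^{2}=e$ turns this into $(yx-\gamma)xe^{\gamma-1}=Q(\gamma)xe^{\gamma-1}$. Left-multiplying by $x$ and reapplying $x^{2}=e$ yields $xe^{\gamma}y=(xy-\gamma)e^{\gamma}=P(-\gamma)e^{\gamma}$, and the two $f$-analogues follow by the mirror argument based on $[f,x]=-y$ and $y^{2}=-f$. No individual step is hard; the main obstacle is purely bookkeeping of signs, since the asymmetry between $x^{2}=+e$ and $y^{2}=-f$, together with the opposite-sign $\mathrm{ad}$-eigenvalues of $e$ and $f$ acting on $xy$, propagates through all fourteen relations and must be tracked with a uniform convention.
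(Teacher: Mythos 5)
Your computations are correct and complete; note that the paper offers no proof of this proposition at all (it is quoted from \cite{IK} with a ``cf.''), so a direct verification of the kind you outline is exactly what is called for, and each of your four blocks checks out: the cancellation $-[e,f]+\{x,y\}=-h+h=0$ in block (i), the eigenvalue computations $(\mathrm{ad}\,e)(xy)=(\mathrm{ad}\,e)(yx)=-e$ and $(\mathrm{ad}\,f)(xy)=(\mathrm{ad}\,f)(yx)=f$ in block (ii), the single-step reshuffles in block (iii), and the two-step MFF-plus-substitution arguments in block (iv). Your remark that the MFF sum terminates because $(\mathrm{ad}\,e)$ and $(\mathrm{ad}\,f)$ kill their images here is the right structural point, and applying the formula with $b$ odd is legitimate since $e,f$ are even, so the adjoint action is the ordinary commutator.

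One caveat: your computation in block (ii) actually yields $f^{\gamma}Q(\alpha)=Q(\alpha-\gamma)f^{\gamma}$, since $f^{\gamma}(yx-\alpha)=(yx-\alpha)f^{\gamma}+\gamma f\cdot f^{\gamma-1}=(yx-(\alpha-\gamma))f^{\gamma}$, whereas the statement prints $f^{\gamma}Q(\alpha)=P(\alpha-\gamma)f^{\gamma}$. These are not the same element of $U(\mathfrak{g})$ (they differ by $xy+yx-2(\alpha-\gamma)=h-2(\alpha-\gamma)$), so the printed identity is a typo rather than something your method fails to reach; the corresponding relation ${T_f}^{\gamma}Q_1(\alpha)=Q_1(\alpha-\gamma){T_f}^{\gamma}$ in Proposition \ref{propl0} later in the paper confirms the intended form. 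You should state explicitly that you are proving the corrected identity rather than asserting that your shift ``yields the statement''; as written, your proposal silently passes over a line that is false as printed.
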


Let $\sigma$ be the anti-automorphism of superalgebra $U(\mathfrak{g})$ such that $\sigma(a)=-a$ for any $a\in\mathfrak{g}$.
Then $\sigma(P(\alpha))=-Q(\alpha)$ and $\sigma(Q(\alpha))=-P(\alpha)$ for any $\alpha\in\mathbb{C}$.
Let $\pi$ be the projection $\tilde{\mathfrak{g}}$ onto $\mathfrak{g}$ such that $\pi(a\otimes t^n)=a$ for any $a\in\mathfrak{g}$ and $\pi(c)=0$.
From the Lemma 4.2 in \cite{IK}, we have the following proposition.

\begin{prop}\label{proppai}
{\rm\cite{IK}}.The following projection formulas hold:
\begin{equation}
\begin{aligned}
&\pi(F_1(m,s))=\prod_{j=1}^s\prod_{i=1}^m\{\prod_{i+j\in2\mathbb{Z}}P(\frac{i}{2}+jl)\prod_{i+j\in2\mathbb{Z}+1}Q(-\frac{i-1}{2}-jl)\}y^m,\\
&\pi(F_2(m,s))=\prod_{j=1}^{q-s-1}\prod_{i=1}^{p-m}\{\prod_{i+j\in2\mathbb{Z}}Q(\frac{i}{2}+jl)\prod_{i+j\in2\mathbb{Z}+1}P(-\frac{i-1}{2}-jl)\}x^{p-m}.
\end{aligned}
\end{equation}
\end{prop}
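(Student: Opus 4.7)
The plan is to prove both projection formulas by parallel induction: on $s$ for $\pi(F_1(m,s))$, and on $q-s-1$ for $\pi(F_2(m,s))$. In each case the goal is to reduce the affine computation to a pure $U(\mathfrak{g})$-level manipulation driven entirely by the commutation identities of Proposition \ref{proppai1}. A helpful preliminary remark is that the families $P(\alpha), Q(\alpha)$ mutually commute, so only the multiset of arguments matters in the target expression; this removes all ordering ambiguity and lets one focus on tracking how the arguments of the emitted $P$'s and $Q$'s shift as the induction unwinds.

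I would start with the base case $s=0$ of the first formula. The parity condition $m+s\equiv 1\pmod 2$ forces $m$ odd, so $F_1(m,0)=y(0)(y(0)^2)^{(m-1)/2}=y(0)^m$, which projects to $y^m$ and matches the empty outer product on the right-hand side. For the inductive step I would isolate the outermost block
\begin{equation*}
y(0)(y(0)^2)^{(m-1)/2+ls}\cdot e(-1)^{m/2+l(s-1)}
\end{equation*}
and transport the factor $e(-1)^{m/2+l(s-1)}$ rightwards through the next $m$ copies of $y(0)$. Each single transport is controlled by one of the two key identities $e^\gamma y=Q(\gamma)xe^{\gamma-1}$ and $xe^\gamma y=P(-\gamma)e^\gamma$ of Proposition \ref{proppai1}, used alternately: the first emits a factor $Q(\gamma)$ together with a trailing $x$, and the second absorbs that $x$ while emitting a factor $P(-\gamma)$. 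This alternation is precisely what produces the parity selector $i+j\in 2\mathbb{Z}$ versus $i+j\in 2\mathbb{Z}+1$ in the statement, and the decrement $\gamma\mapsto \gamma-1$ at each step yields the correct shifted arguments $i/2+jl$ and $-(i-1)/2-jl$. After all $m$ transports have been completed, any residual $x$'s are absorbed using $xP(\alpha)=Q(1-\alpha)x$ and $xQ(\alpha)=P(-\alpha)x$, leaving the clean tail $y^m$ on the right.

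The formula for $\pi(F_2(m,s))$ follows by the same mechanism, but with $e(-1)^{(p-m)/2+l(q-s-1)}$ as the outermost factor and parameters $(p-m,q-s-1)$ in place of $(m,s)$. Because the outermost operator is now an even factor $e(-1)$ rather than $y(0)$, the two parity classes of the selector are interchanged relative to $F_1$, producing $Q(i/2+jl)$ when $i+j\in 2\mathbb{Z}$ and $P(-(i-1)/2-jl)$ when $i+j\in 2\mathbb{Z}+1$. Alternatively, the second formula can be deduced from the first by applying the anti-automorphism $\sigma$ of $U(\mathfrak{g})$ together with $\sigma(P(\alpha))=-Q(\alpha)$, $\sigma(Q(\alpha))=-P(\alpha)$ and the index match $(m,s)\leftrightarrow(p-m,q-s-1)$.

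The main obstacle is the combinatorial bookkeeping of arguments and parity classes, not any single conceptual step. One has to verify that after the $j$-th outer iteration (counting inward from the rightmost block) the accumulated factors are exactly $\prod_{i=1}^m$ of $P(i/2+jl)$ or $Q(-(i-1)/2-jl)$ in the prescribed parity classes. A clean way to organize this is to introduce the running exponent $\gamma_{j,k}=m/2+l(s-j)-k$, record how each application of the $e^\gamma y$- or $xe^\gamma y$-identity shifts $\gamma_{j,k}\mapsto \gamma_{j,k}-1$ and emits a single $P$ or $Q$, and then reindex via $i=m-k$ to match the stated range $1\le i\le m$. A subordinate technical point is the legitimacy of the non-integer exponents involved, since $l\in\mathbb{Q}$ is generically not a half-integer: this is guaranteed by the MFF-type identity displayed just after Proposition \ref{propsv}, which validates the commutation rules of Proposition \ref{proppai1} in a suitable completion of $U(\tilde{\mathfrak{g}})$.
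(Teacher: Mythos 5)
First, a point of comparison: the paper does not prove this proposition at all — it is imported verbatim from Lemma 4.2 of \cite{IK} (the sentence preceding the statement reads ``From the Lemma 4.2 in \cite{IK}, we have the following proposition''), so there is no in-paper argument to measure your proposal against. Your reconstruction does identify the correct mechanism: the identities $e^\gamma y=Q(\gamma)xe^{\gamma-1}$ and $xe^\gamma y=P(-\gamma)e^\gamma$ from Proposition \ref{proppai1}, applied alternately, turn each unit of $e$ and each pair of $y$'s into one $Q$ and one $P$, and the bookkeeping is consistent: the word $\pi(F_1(m,s))$ carries total $e$-exponent $\tfrac{sm}{2}$ and total $y$-degree $(s+1)m$, which is exactly enough to produce $sm$ factors $P,Q$ (each consuming one $x$, i.e.\ half an $e$, and one $y$) and a residual $y^m$; the emitted pairs $Q(\gamma)P(1-\gamma)$ also match the consecutive arguments in the stated product.

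Nevertheless the inductive step as written has a genuine gap. Peeling off the outermost block gives $F_1(m,s)=y(y^2)^{\frac{m-1}{2}+ls}\,e^{\frac{m}{2}+l(s-1)}\,F_1(m-2l,s-1)$: the remainder is an $F_1$-word whose first parameter $m-2l$ is generically not an integer, so the formula as stated — whose right-hand side contains the product $\prod_{i=1}^{m}$ — cannot serve as the inductive hypothesis. One must reformulate the claim for a formal (complex) first parameter and justify the specialization; this is the actual content of the MFF calculus and not the ``subordinate technical point'' you relegate it to. Relatedly, ``transporting $e(-1)^{m/2+l(s-1)}$ through the next $m$ copies of $y(0)$'' is not meaningful: the adjacent block is $y(y^2)^{\frac{m-1}{2}+l(s-2)}$, of non-integral formal length, so the transport necessarily passes through complex powers of $y^2=-f$ and leaves a non-integral power of $e$ at every intermediate stage. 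Finally, the proposed shortcut for the second formula via the anti-automorphism $\sigma$ fails: $\sigma$ reverses a word and negates its letters, hence sends an $F_1$-type word (with $y$-blocks at both ends) to another word of the same shape, never to an $F_2$-type word (which has $e$-blocks at both ends). In the paper $\sigma$ is only ever applied to the already-computed projection $\pi(F_2(1,0))$ in the proof of Theorem \ref{thm1}, not used to derive one projection formula from the other.
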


\section{Rationality of $(L_{\widehat{\mathfrak{g}}}(\mathcal{l},0),\omega_\xi)$}
\label{sec:3}
	\def\theequation{3.\arabic{equation}}
	\setcounter{equation}{0}

In this section,
we first construct the weak $V_{\widehat{\mathfrak{g}}}(\mathcal{l},\mathbb{C})$-module $L(\mathcal{l},U)$
and determine the condition that $L(\mathcal{l},U)$ is a weak $L_{\widehat{\mathfrak{g}}}(\mathcal{l},0)$-module.
Then we prove that $L_{\widehat{\mathfrak{g}}}(\mathcal{l},0)$ at admissible level is rational
in category $\mathcal{C}_{\mathcal{l}}$ and $(L_{\widehat{\mathfrak{g}}}(\mathcal{l},0),\omega_\xi)$
are rational $\mathbb{Q}$-graded vertex operator superalgebras respect to a family of Virasoro elements $\omega_\xi$, where $0<\xi<1$ is a rational number.

\subsection{The weak $L_{\widehat{\mathfrak{g}}}(\mathcal{l},0)$-module $L(\mathcal{l},U)$}
\label{sec:3.1}

We know that for $\mathcal{l}\neq -\frac{3}{2}$,
$V(\mathcal{l},\mathbb{C})$ and $L(\mathcal{l},0)$ have natural
$\mathbb{Z}$-graded vertex operator superalgebra structures
and any $M(\mathcal{l},U)$ is a weak module for $V(\mathcal{l},\mathbb{C})$.
We denote by $L_{\widehat{\mathfrak{g}}}(\mathcal{l},0)$ the $\mathbb{Z}$-graded vertex operator superalgebra $L(\mathcal{l},0)$.
For a $\mathbb{C}h$-module $U$, we define a linear function on $U^*\otimes M(\mathcal{l},U)$ as follows:
$$\langle u^\prime, u\rangle=u^\prime(\pi^\prime(u)), ~\mbox{for}~~u^\prime\in U^*,u\in M(\mathcal{l},U),$$
where $\pi^\prime$ is the projection of $M(\mathcal{l},U)$ onto the subspace $U$.
Define
\begin{equation}
I=\{u\in M(\mathcal{l},U)\mid\langle u^\prime, au\rangle=0~~\mbox{for~~any}~~u^\prime\in U^*,a\in U(\tilde{\mathfrak{g}})\}.
\end{equation}

\begin{lem}\label{lemI}
$I$ is unique and maximal in the set of submodules of $M(\mathcal{l},U)$ which intersects with $U$ trivially.
\end{lem}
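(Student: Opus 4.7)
The plan is to establish three properties of $I$: (a) $I$ is a $U(\tilde{\mathfrak{g}})$-submodule of $M(\mathcal{l},U)$; (b) $I\cap U=0$; and (c) every $U(\tilde{\mathfrak{g}})$-submodule $W$ of $M(\mathcal{l},U)$ satisfying $W\cap U=0$ is contained in $I$. Together these imply that $I$ is the unique largest submodule intersecting $U$ trivially, which is the content of the lemma.

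Properties (a) and (b) follow directly from the definition. For (a), if $u\in I$ and $b\in U(\tilde{\mathfrak{g}})$, then for every $u'\in U^{*}$ and $a\in U(\tilde{\mathfrak{g}})$ one has $\langle u',a(bu)\rangle=\langle u',(ab)u\rangle=0$, since $ab\in U(\tilde{\mathfrak{g}})$ and $u\in I$; hence $bu\in I$. For (b), if $u\in I\cap U$, taking $a=1$ in the defining condition gives $u'(\pi'(u))=u'(u)=0$ for all $u'\in U^{*}$, whence $u=0$.

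The main content, and the main obstacle, is (c). Fix $W$ with $W\cap U=0$, $w\in W$, and $a\in U(\tilde{\mathfrak{g}})$. Since $aw\in W$, it suffices to show $\pi'(aw)\in W$, for then $\pi'(aw)\in W\cap U=\{0\}$ forces $\pi'(aw)=0$ and so $w\in I$. I would use the PBW decomposition $M(\mathcal{l},U)\cong U(N_{-})\otimes U$ and write $aw=\pi'(aw)+r$ with $r\in U(N_{-})_{\geq 1}\otimes U$. The idea is to extract $\pi'(aw)$ from $aw$ as an element of $W$ by applying carefully chosen operators from $U(N_{+})\subseteq U(\tilde{\mathfrak{g}})$. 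Since $N_{+}$ annihilates $U$, such operators kill the $\pi'(aw)$ summand and act on $r$ through iterated commutators from $[N_{+},N_{-}]\subseteq N_{-}+\mathbb{C}h+\mathbb{C}k+N_{+}$; the $\mathbb{C}h+\mathbb{C}k$-components act as scalars on $U$ and strictly lower the $U(N_{-})$-degree of the remaining terms, while the $N_{+}$-components kill further copies of $U$. Iterating with a suitable $U(N_{+})$-product, modelled PBW-monomial by PBW-monomial on the image of $r$ under the antiautomorphism $\sigma$ introduced before Proposition \ref{proppai}, eventually yields a $U$-valued element of $W$ proportional to $\pi'(aw)$.

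The delicate step will be ensuring that this proportionality constant is nonzero on the relevant $(h,k)$-weight component of $U$; this is the Shapovalov-type nondegeneracy of the natural pairing $U(N_{+})\times U(N_{-})\to U(\mathbb{C}h\oplus\mathbb{C}k)$, which I would verify by a weight-space and $U(N_{-})$-degree induction, first reducing to the $d$-weight $0$ component $U(\mathbb{C}f\oplus\mathbb{C}y)\otimes U$ (the generalized Verma module for the finite-dimensional $\mathfrak{g}=osp(1|2)$) and then handling the higher $t$-modes by analogous commutator identities. Once this nondegeneracy is in hand, the forced vanishing of $\pi'(aw)$ in $W\cap U=\{0\}$ completes (c), and hence the lemma.
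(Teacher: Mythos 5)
Your overall structure is right and your reduction agrees with the paper's: (a) and (b) are immediate from the definition of $I$, and for the maximality (c) it suffices to show that $\pi^\prime(aw)\in W$ for every $w\in W$ and $a\in U(\tilde{\mathfrak{g}})$, since then $\pi^\prime(aw)\in W\cap U=0$ and hence $w\in I$. The gap is in how you propose to prove $\pi^\prime(aw)\in W$. Writing $aw=\pi^\prime(aw)+r$ with $r\in U(N_-)_{\geq 1}\otimes U$, you want to recover $\pi^\prime(aw)$ inside $W$ by applying elements of $U(N_+)$ to $aw$. But, as you yourself note, every such operator annihilates the summand $\pi^\prime(aw)$ (because $N_+U=0$), so its value on $aw$ equals its value on $r$ and carries no information about $\pi^\prime(aw)$; there is no ``proportionality constant'' to arrange to be nonzero. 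Moreover the Shapovalov-type nondegeneracy you invoke cannot rescue the step: nondegeneracy of the pairing $U(N_+)\times U(N_-)\to U(\mathbb{C}h\oplus\mathbb{C}k)$ as a polynomial-valued form does not give nondegeneracy after evaluation at the weights of $U$, and at admissible weights the evaluated form \emph{is} degenerate --- indeed for $U=\mathbb{C}v_{\mathcal{j}}$ the module $I$ is precisely the (nonzero) maximal proper submodule of $M(\mathcal{l},\mathcal{j})$, on which no element of $U(N_+)$ ever reproduces a nonzero vector of $U$. So the extraction scheme fails exactly in the cases the paper cares about.

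The paper's own argument replaces this step by a gradation argument: $M(\mathcal{l},U)\cong U(N_-)\otimes U$ carries a natural gradation as a $\hat{\mathfrak{g}}$-module with $U$ as the degree-zero component (the grading by the height of the $U(N_-)$-part, read off from the generalized eigenvalues of $d=-L(0)$, given by the Sugawara construction for $\mathcal{l}\neq-\frac{3}{2}$, together with $h$). Since these operators stabilize every submodule, every submodule of $M(\mathcal{l},U)$ is graded; hence $au\in W$ forces its degree-zero component $\pi^\prime(au)$ to lie in $W$, and $W\cap U=0$ then gives $\pi^\prime(au)=0$. Substituting this ``submodules are graded'' observation for your $U(N_+)$-extraction repairs the proof and is essentially all the paper does.
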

\begin{proof}
Let $I^\prime$ is a submodule of $M(\mathcal{l},U)$ which intersects with $U$ trivially.
Suppose $I^\prime\nsubseteq I$,
we have there exists $u\in I^\prime$ such that
$\langle u^\prime, au\rangle\ne0$ for some $u^\prime\in U^*$, then $\pi^\prime(au)\ne0$.
Since $I^\prime$ is a submodule, we have $au\in I^\prime$.
From the natural gradation of $M(\mathcal{l},U)$ as a $\hat{\mathfrak{g}}$-module,
we have $\pi^\prime(au)\in I^\prime$, it contradicts to that $I^\prime$ intersects with $U$ trivially, hence $I^\prime\subseteq I$.
Therefore $I$ is unique and maximal in the set of submodules of $M(\mathcal{l},U)$ which intersects with $U$ trivially.
\end{proof}

Set $L(\mathcal{l},U)=M(\mathcal{l},U)/I$ and we regard $U$ as a subspace of $L(\mathcal{l},U)$.
Then $\pi^\prime$ induces a projection of $L(\mathcal{l},U)$ to $U$, which we still denote it by $\pi^\prime$.
It is clear that $L(\mathcal{l},U)$ is a weak module for $V(\mathcal{l},\mathbb{C})$.
Let $Y(\cdot,z)$ be the vertex operator map defining the module structure on $L(\mathcal{l},U)$,
then $Y(\cdot,z)$ is an intertwining operator of type $\begin{pmatrix}L(\mathcal{l},U)\\ V(\mathcal{l},\mathbb{C})L(\mathcal{l},U)\end{pmatrix}$.
Let
$$\mathcal{Y}(u,z)v=(-1)^{|u||v|}e^{zL(-1)}Y(v,-z)u$$
for any homogeneous element $u\in L(\mathcal{l},U),v\in V(\mathcal{l},\mathbb{C})$,
then $\mathcal{Y}(\cdot,z)$ is an intertwining operator of type
$\begin{pmatrix}L(\mathcal{l},U)\\ L(\mathcal{l},U)V(\mathcal{l},\mathbb{C})\end{pmatrix}$ (cf. \cite{FHL}).

\begin{lem}\label{lem111}
The $\tilde{\mathfrak{g}}$-module $L(\mathcal{l},U)$ is a weak module for $L_{\widehat{\mathfrak{g}}}(\mathcal{l},0)$
if and only if
\begin{equation}
\langle u^\prime,\mathcal{Y}(u,z)v_{0,2}\rangle=0~~\mbox{for~~any}~~u^\prime\in U^*, u\in U(\mathfrak{g})U\subset L(\mathcal{l},U).
\end{equation}
\end{lem}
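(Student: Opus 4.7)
The strategy is to convert the weak $L_{\widehat{\mathfrak{g}}}(\mathcal{l},0)$-module condition into the vanishing of a single vertex operator, rewrite it via skew-symmetry, and then exploit the maximality property built into Lemma~\ref{lemI}.

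By Remark~\ref{remj=0} the maximal ideal $J$ of $V(\mathcal{l},\mathbb{C})$ is the $\tilde{\mathfrak{g}}$-submodule generated by $v_{0,2}$. The iterate formula from vertex superalgebra theory, which writes $Y_M(a_{(n)}v_{0,2},z)$ in terms of $Y_M(a,z)$ and $Y_M(v_{0,2},z)$, shows that $J$ acts as zero on the weak $V(\mathcal{l},\mathbb{C})$-module $L(\mathcal{l},U)$ if and only if $Y_M(v_{0,2},z)=0$. The defining skew-symmetry $\mathcal{Y}(u,z)v_{0,2}=(-1)^{|u||v_{0,2}|}e^{zL(-1)}Y_M(v_{0,2},-z)u$, in which $e^{zL(-1)}$ is invertible, recasts this as $\mathcal{Y}(u,z)v_{0,2}=0$ for every $u\in L(\mathcal{l},U)$. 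The forward direction of the lemma is then immediate by applying $\pi'$.

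For the reverse direction, introduce
$$W=\mathrm{span}\{(u)_n w\mid u\in L(\mathcal{l},U),\,n\in\mathbb{Q},\,w\in J\}\subseteq L(\mathcal{l},U).$$
Using the intertwining commutator formula $[x(n),\mathcal{Y}(u,z)]=\sum_{i\ge 0}\binom{n}{i}z^{n-i}\mathcal{Y}(x(i)u,z)$, one checks that $W$ is a $\tilde{\mathfrak{g}}$-submodule of $L(\mathcal{l},U)$: in the expansion of $a(n)\cdot(u)_m w$ the potentially dangerous term $(u)_m(a(n)w)$ still lies in $W$ because $J$ is $\tilde{\mathfrak{g}}$-stable. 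By the maximality part of Lemma~\ref{lemI}, to conclude $W=0$ it suffices to show $W\cap U=0$, and this is implied by $\pi'(W)=0$. A PBW reduction using $U(\tilde{\mathfrak{g}})=U(N_-)U(\mathbb{C}h\oplus\mathbb{C}k)U(N_+)$ together with the singular properties $N_+\cdot v_{0,2}=0$ and $h(0)$ acting by a scalar on $v_{0,2}$ rewrites each generator $\pi'((u)_n w)$ as a finite $z$-polynomial combination of terms $\pi'(\mathcal{Y}(\tilde u,z)v_{0,2})$ with $\tilde u\in U(\mathfrak{g})U$, which vanish by hypothesis.

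The main obstacle is this PBW reduction itself: one must iterate the commutator formula to move every $U(N_-)$-factor of $w$ and every non-zero-mode factor of $a$ into the first slot of $\mathcal{Y}$, crucially using that positive modes of $\tilde{\mathfrak{g}}$ annihilate $U$ and that $\mathfrak{g}$-zero-modes preserve $U(\mathfrak{g})U$, so that the hypothesis, stated only for $u\in U(\mathfrak{g})U$, finally applies.
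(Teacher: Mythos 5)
Your proposal is correct and follows essentially the same route as the paper: both directions hinge on the skew-symmetry defining $\mathcal{Y}$, the fact that $J=U(N_-)v_{0,2}$, and a two-stage reduction via the commutator and iterate formulas for the intertwining operator (using that $N_-$ is killed by $\pi'$, that nonnegative modes preserve $U(\mathfrak{g})U$ and act nicely on the singular vector, and finally the maximality of $I$ from Lemma \ref{lemI} to conclude the relevant coefficients vanish). The only difference is organizational — you package the conclusion as "$W$ is a submodule with $W\cap U=0$" while the paper verifies $\langle u',x\,\mathcal{Y}(u,z)J\rangle=0$ for all $x\in U(\tilde{\mathfrak{g}})$ directly — which is the same argument in different clothing.
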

\begin{proof}
Let $J$ be the maximal submodule of $V(\mathcal{l},\mathbb{C})$. From Remark \ref{remj=0}, we have $J=U(N_-)v_{0,2}$.
Since $L_{\widehat{\mathfrak{g}}}(\mathcal{l},0)=V(\mathcal{l},\mathbb{C})/J$ is the quotient vertex operator superalgebra of $V(\mathcal{l},\mathbb{C})$
and $L(\mathcal{l},U)$ is a weak module for the vertex operator superalgebra $L_{\widehat{\mathfrak{g}}}(\mathcal{l},0)$,
then for any $v\in J$ we have $Y(v,z)L(\mathcal{l},U)=0$.
Hence $$\mathcal{Y}(u,z)v_{0,2}=(-1)^{|u||v_{0,2}|}e^{zL(-1)}Y(v_{0,2},-z)u=0$$ for any homogeneous element $u\in L(\mathcal{l},U)$,
then $\langle u^\prime,\mathcal{Y}(u,z)v_{0,2}\rangle=0~~\mbox{for~~any}~~u^\prime\in U^*, u\in U(\mathfrak{g})U\subset L(\mathcal{l},U)$.

For the other hand, if $a\in N_-U(N_-)$, we have $\pi^\prime(a\mathcal{Y}(u,z)w)=0$. Then
$$\langle u^\prime,a\mathcal{Y}(u,z)w\rangle=0~~\mbox{for~~any}~~u^\prime\in U^*, u\in L(\mathcal{l},U), a\in N_-U(N_-), w\in V(\mathcal{l},\mathbb{C}).$$
Let $\Gamma=\{w\in V(\mathcal{l},\mathbb{C})\mid \langle u^\prime,\mathcal{Y}(u,z)w\rangle=0~~\mbox{for~~any}~~u^\prime\in U^*, u\in U(\mathfrak{g})U\subset L(\mathcal{l},U)\}$, then $v_{0,2}\in \Gamma$.
From the Jacobi identity for the intertwining operator we can get the following commutator formula
\begin{equation}\label{cf}
[a(m),\mathcal{Y}(u,z)]=\sum_{j\geq0}\begin{pmatrix}m\\j\end{pmatrix}\mathcal{Y}(a(j)u,z)z^{m-j}
\end{equation}
for any $a\in\mathfrak{g}, m\in\mathbb{Z}$ and $u\in L(\mathcal{l},U)$.
If $w\in\Gamma, a(m)\in N_-, u\in U(\mathfrak{g})U\subset L(\mathcal{l},U)$,
\begin{equation*}
\begin{aligned}
\mathcal{Y}(u,z)a(m)w&=(-1)^{|u||a|}(a(m)\mathcal{Y}(u,z)w-[a(m),\mathcal{Y}(u,z)]w)\\
&=(-1)^{|u||a|}(a(m)\mathcal{Y}(u,z)w-\sum_{j\geq0}\begin{pmatrix}m\\j\end{pmatrix}\mathcal{Y}(a(j)u,z)z^{m-j}w),
\end{aligned}
\end{equation*}
then by $j\geq0$ we have $a(j)u\in U(\mathfrak{g})U\subset L(\mathcal{l},U)$ and $\langle u^\prime,\mathcal{Y}(u,z)a(m)w\rangle=0$,
hence $a(m)w\in\Gamma$.
Then we have $J\subseteq\Gamma$.
From the Jacobi identity we can also get
\begin{equation}
\mathcal{Y}(a(n)u,z)=\sum_{j\geq0}\begin{pmatrix}n\\j\end{pmatrix}a(n-j)\mathcal{Y}(u,z)z^j-(-1)^{n+|u||a|}\sum_{j\geq0}\begin{pmatrix}n\\j\end{pmatrix}\mathcal{Y}(u,z)a(j)z^{n-j}
\end{equation}
for any $u\in L(\mathcal{l},U),a\in\mathfrak{g}, n\in\mathbb{Z}$.
Since $L(\mathcal{l},U)$ is generated by $U$ as $\tilde{\mathfrak{g}}$-module,
then we have $\langle u^\prime,\mathcal{Y}(u,z)J\rangle=0~~\mbox{for~~any}~~u^\prime\in U^*, u\in L(\mathcal{l},U)$.
From (\ref{cf}), we have
$$\langle u^\prime,x\mathcal{Y}(u,z)J\rangle=0~~\mbox{for~~any}~~u^\prime\in U^*, x\in U(\tilde{\mathfrak{g}}), u\in L(\mathcal{l},U).$$
Then $\mathcal{Y}(u,z)J\subseteq I$ for any $u\in L(\mathcal{l},U)$, i.e., $\mathcal{Y}(u,z)J=0$ in $L(\mathcal{l},U)$.
 It implies that $\mathcal{Y}(\cdot,z)$ induces an intertwining operator of type $\begin{pmatrix}L(\mathcal{l},U)\\ L(\mathcal{l},U)L_{\widehat{\mathfrak{g}}}(\mathcal{l},0)\end{pmatrix}$.
Hence $L(\mathcal{l},U)$ is a weak module for the vertex operator superalgebra $L_{\widehat{\mathfrak{g}}}(\mathcal{l},0)$.
\end{proof}

\begin{thm}\label{thm1}
$L(\mathcal{l},U)$ is a weak $L_{\widehat{\mathfrak{g}}}(\mathcal{l},0)$-module if and only if $f(h)U=0$, where
\begin{equation}\label{fh}
f(h)=\prod_{i+j\in2\mathbb{Z}+1,0\leq j\leq q-1,1\leq i\leq p-1}(h-\frac{i-1}{2}+jl).
\end{equation}
\end{thm}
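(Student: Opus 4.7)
The plan is to apply Lemma \ref{lem111} and convert the weak-module condition into a polynomial equation on $U$. By that lemma, the condition is $\pi'(\mathcal{Y}(u,z)v_{0,2})=0$ for every $u\in U(\mathfrak{g})U$. Since $N_+$ annihilates $U$ and $y^2=-f$ in $U(\mathfrak{g})$, PBW yields $U(\mathfrak{g})U=\mathrm{span}_\mathbb{C}\{y^ku_0: k\geq 0,\ u_0\in U\}$, so I would only need to check the condition on vectors $u=y^ku_0$.

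Next, I would use the skew-symmetry $\mathcal{Y}(u,z)v_{0,2}=\pm e^{zL(-1)}Y(v_{0,2},-z)u$ together with a weight analysis: since $L(-1)$ strictly raises the $L(0)$-weight and $U$ lies in the lowest $L(0)$-weight subspace $U(\mathfrak{g})U$, the projection $\pi'$ will extract only the contribution of the zero mode $o(v_{0,2}):=(v_{0,2})_{\mathrm{wt}(v_{0,2})-1}$ applied to $y^ku_0$. This zero mode preserves $U(\mathfrak{g})U$, and by the vertex-operator construction for the MFF singular vector $v_{0,2}=F_2(1,0)\mathbf{1}$, it is determined by the projection $\pi(F_2(1,0))$ of Proposition \ref{proppai}. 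Because $\pi(F_2(1,0))$ ends in $x^{p-1}$ and $xu_0=0$, only $k=p-1$ should give a nontrivial projection to $U$; for this $k$, iteratively using the super-relation $\{x,y\}=h$ from Proposition \ref{proppai1} reduces $x^{p-1}y^{p-1}u_0$ to an explicit polynomial in $h$ times $u_0$. Applying the remaining $P,Q$-factors---which on the lowest-weight vector $u_0$ satisfy $P(\alpha)u_0=(h+\alpha)u_0$ and $Q(\alpha)u_0=-\alpha u_0$ by Proposition \ref{proppai1}---then yields a polynomial in $h$ acting on $u_0$ whose vanishing encodes the desired condition.

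The hardest step will be the combinatorial identification of this polynomial with $f(h)$, up to a nonzero constant. This requires careful tracking of signs (including those from the intertwining-operator skew-symmetry and the anti-automorphism $\sigma$), of the parity constraint $i+j\in 2\mathbb{Z}+1$ in Proposition \ref{proppai}, and of how factors match: the $j=0$ factors of $f(h)$ should arise from the reduction of $x^{p-1}y^{p-1}$ to a polynomial in $h$, while those with $1\leq j\leq q-1$ should arise from the $P(-\tfrac{i-1}{2}-jl)$-operators. Once the identification $o(v_{0,2})(y^{p-1}u_0)=\text{const}\cdot f(h)u_0$ is established, the weak-module condition becomes $f(h)u_0=0$ for every $u_0\in U$, i.e.\ $f(h)U=0$.
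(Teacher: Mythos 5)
Your proposal is correct and follows essentially the same route as the paper: reduce via Lemma \ref{lem111}, use skew-symmetry/commutator manipulations and the degree restriction to see that only the ``zero-mode'' contribution $\sigma\pi(F_2(1,0))$ acting on $y^{p-1}u_0$ survives the projection $\pi'$, and then evaluate the resulting product of $P,Q$-operators together with $x^{p-1}y^{p-1}$ on $U$ using Propositions \ref{proppai1} and \ref{proppai}. The only point to watch is that the relevant operator is $\sigma\pi(F_2(1,0))$ (reversed order with signs), not $\pi(F_2(1,0))$ itself, which you correctly flag when you mention tracking the anti-automorphism $\sigma$.
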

\begin{proof}
For $n\in\mathbb{Z}, a\in\mathfrak{g}$, we define $\mbox{deg}(a\otimes t^n)=n$.
For any $u^\prime\in U^*, a(m)\in N_-, u\in U(\mathfrak{g})U\subset L(\mathcal{l},U), w\in V(\mathcal{l},\mathbb{C})$, by the commutator formula (\ref{cf}) we have
\begin{align*}
 \langle u^\prime,\mathcal{Y}(u,z)a(m)w\rangle&=(-1)^{|u||a|}(\langle u^\prime,a(m)\mathcal{Y}(u,z)w\rangle-\langle u^\prime,[a(m),\mathcal{Y}(u,z)]w\rangle)\\
 &=(-1)^{|u||a|+1}\langle u^\prime,\sum_{j\geq0}\begin{pmatrix}m\\j\end{pmatrix}\mathcal{Y}(a(j)u,z)z^{m-j}w\rangle\\
 &=(-1)^{|u||a|+1}\langle u^\prime,\mathcal{Y}(a(0)u,z)z^{m}w\rangle\\
 &=\langle u^\prime,(-1)^{|u||a|+1}z^{\mbox{deg}(a(m))}\mathcal{Y}(\pi(a(m))u,z)w\rangle.
\end{align*}
Hence for any $u^\prime\in U^*, a\in U(N_-), u\in U(\mathfrak{g})U\subset L(\mathcal{l},U), w\in V(\mathcal{l},\mathbb{C})$ we have
$$ \langle u^\prime,\mathcal{Y}(u,z)aw\rangle=\langle u^\prime,\xi z^{\mbox{deg}(a)}\mathcal{Y}(\sigma\pi(a)u,z)w\rangle,$$
where $\xi=1$ or $-1$.
Let $a=F_2(1,0)$, then $v_{0,2}=a\textbf{1}$.
From Lemma \ref{lem111}, $L(\mathcal{l},U)$ is a weak $L_{\widehat{\mathfrak{g}}}(\mathcal{l},0)$-module if and only if
\begin{equation*}
\langle u^\prime,\mathcal{Y}(\sigma\pi(a)u,z)\textbf{1}\rangle=0~~\mbox{for~~any}~~u^\prime\in U^*, u\in U(\mathfrak{g})U\subset L(\mathcal{l},U).
\end{equation*}
By Proposition \ref{proppai1} and Proposition \ref{proppai} we have
\begin{align*}
&\sigma\pi(a)=\sigma\prod_{j=1}^{q-1}\prod_{i=1}^{p-1}\{\prod_{i+j\in2\mathbb{Z}}Q(\frac{i}{2}+jl)\prod_{i+j\in2\mathbb{Z}+1}P(-\frac{i-1}{2}-jl)\}x^{p-1}\\
&=(-1)^{q(p-1)}x^{p-1}\prod_{j=1}^{q-1}\prod_{i=1}^{p-1}\{\prod_{i+j\in2\mathbb{Z}}P(\frac{i}{2}+jl)\prod_{i+j\in2\mathbb{Z}+1}Q(-\frac{i-1}{2}-jl)\}\\
&=\begin{cases}
(-1)^{q(p-1)}\prod_{j=1}^{q-1}\prod_{i=1}^{p-1}\{\prod_{i+j\in2\mathbb{Z}}P(-\frac{p-i-1}{2}+jl)\prod_{i+j\in2\mathbb{Z}+1}Q(\frac{p-i}{2}-jl)\}x^{p-1}, &p\notin2\mathbb{Z}\\
(-1)^{q(p-1)}\prod_{j=1}^{q-1}\prod_{i=1}^{p-1}\{\prod_{i+j\in2\mathbb{Z}}Q(\frac{p-i}{2}-jl)\prod_{i+j\in2\mathbb{Z}+1}P(-\frac{p-i-1}{2}+jl)\}x^{p-1},
&p\in2\mathbb{Z}
\end{cases}\\
&=(-1)^{q(p-1)}\prod_{j=1}^{q-1}\prod_{i=1}^{p-1}\{\prod_{i+j\in2\mathbb{Z}+1}P(-\frac{i-1}{2}+jl)\prod_{i+j\in2\mathbb{Z}}Q(\frac{i}{2}-jl)\}x^{p-1}.
\end{align*}
Note that $\mathcal{Y}(\sigma\pi(a)u,z)\textbf{1}=e^{zL(-1)}Y(\textbf{1},-z)\sigma\pi(a)u=e^{zL(-1)}\sigma\pi(a)u$.
Then for any $u^\prime\in U^*, u\in U(\mathfrak{g})U\subset L(\mathcal{l},U)$,
\begin{equation*}
\langle u^\prime,\mathcal{Y}(\sigma\pi(a)u,z)\textbf{1}\rangle=\langle u^\prime,e^{zL(-1)}\sigma\pi(a)u\rangle=\langle u^\prime,\sigma\pi(a)u\rangle,
\end{equation*}
thus $L(\mathcal{l},U)$ is a weak $L_{\widehat{\mathfrak{g}}}(\mathcal{l},0)$-module if and only if for any $u^\prime\in U^*$,
\begin{equation*}
\langle u^\prime,
\prod_{j=1}^{q-1}\prod_{i=1}^{p-1}\{\prod_{i+j\in2\mathbb{Z}+1}P(-\frac{i-1}{2}+jl)\prod_{i+j\in2\mathbb{Z}}Q(\frac{i}{2}-jl)\}x^{p-1}U(\mathfrak{g})U\rangle=0.
\end{equation*}
From the grading restriction on the bilinear pair, it is equivalent to
\begin{equation*}
\prod_{j=1}^{q-1}\prod_{i=1}^{p-1}\{\prod_{i+j\in2\mathbb{Z}+1}P(-\frac{i-1}{2}+jl)\prod_{i+j\in2\mathbb{Z}}Q(\frac{i}{2}-jl)\}x^{p-1}y^{p-1}U=0.
\end{equation*}
By Proposition \ref{proppai1}, we have
\begin{align*}
x^{p-1}y^{p-1}
&=\begin{cases}
\prod_{i=1}^{\frac{p-1}{2}}Q(\frac{p+1}{2}-i)P(-\frac{p-1}{2}+i), &p\notin2\mathbb{Z}\\
P(0)\prod_{i=1}^{\frac{p-2}{2}}P(-\frac{p}{2}+i)Q(\frac{p}{2}-i),  &p\in2\mathbb{Z}
\end{cases}\\
&=\prod_{i\in2\mathbb{Z}+1,1\leq i\leq p-1}P(-\frac{i-1}{2})\prod_{i\in2\mathbb{Z},1\leq i\leq p-1}Q(\frac{i}{2}).
\end{align*}
Since $xU=0$, we have for any $\alpha\in\mathbb{C}$,
$$P(\alpha)U=xyU+\alpha U=(h+\alpha)U, Q(\alpha)U=-\alpha U.$$
Then $L(\mathcal{l},U)$ is a weak $L_{\widehat{\mathfrak{g}}}(\mathcal{l},0)$-module if and only if
\begin{equation*}
\prod_{i+j\in2\mathbb{Z}+1,1\leq j\leq q-1,1\leq i\leq p-1}(h-\frac{i-1}{2}+jl)\prod_{i\in2\mathbb{Z}+1,1\leq i\leq p-1}(h-\frac{i-1}{2})U=0.
\end{equation*}
It is equivalent to
 $$f(h)U=\prod_{i+j\in2\mathbb{Z}+1,0\leq j\leq q-1,1\leq i\leq p-1}(h-\frac{i-1}{2}+jl)U=0.$$
\end{proof}

From Proposition \ref{propadm} and Theorem \ref{thm1}, we obtain the following corollary.

\begin{cor}\label{coro1}
The $\tilde{\mathfrak{g}}$-module $L(\mathcal{l},\mathcal{j})$ is a weak $L_{\widehat{\mathfrak{g}}}(\mathcal{l},0)$-module if and only if $\mathcal{j}$ is admissible.
\end{cor}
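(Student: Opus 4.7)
The plan is to deduce this corollary directly from Theorem \ref{thm1} and Proposition \ref{propadm} by specializing to the one-dimensional case and matching the parametrizations.

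First I would observe that when $U = \mathbb{C}_{\mathcal{j}}$ is the one-dimensional $\mathbb{C}h$-module on which $h$ acts as the scalar $\mathcal{j}$, the generalized construction $L(\mathcal{l}, U)$ reduces to the familiar irreducible highest weight module $L(\mathcal{l}, \mathcal{j})$. Indeed, $M(\mathcal{l}, U)$ is then exactly the Verma module $M(\mathcal{l}, \mathcal{j})$, and by Lemma \ref{lemI} the submodule $I$ is maximal among those intersecting $U$ trivially. Since any proper submodule of $M(\mathcal{l}, \mathcal{j})$ fails to contain the highest weight vector (otherwise it would coincide with $M(\mathcal{l}, \mathcal{j})$) and $U$ is spanned by that vector, the unique maximal proper submodule of $M(\mathcal{l}, \mathcal{j})$ intersects $U$ trivially, so $I$ equals this maximal proper submodule and $L(\mathcal{l}, U) \cong L(\mathcal{l}, \mathcal{j})$.

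Next I would apply Theorem \ref{thm1} to $U = \mathbb{C}_{\mathcal{j}}$: since $h$ acts as the scalar $\mathcal{j}$, the condition $f(h)U = 0$ becomes the scalar equation $f(\mathcal{j}) = 0$. From the explicit form
\[
f(h)=\prod_{\substack{i+j\in 2\mathbb{Z}+1 \\ 0\le j\le q-1,\, 1\le i\le p-1}}\!\!\Bigl(h-\tfrac{i-1}{2}+jl\Bigr),
\]
the roots are precisely the values $\mathcal{j} = \frac{i-1}{2} - jl$ with $i + j$ odd, $1 \leq i \leq p-1$, and $0 \leq j \leq q-1$.

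Finally, I would match this set of roots with the admissibility characterization in Proposition \ref{propadm}(2), which describes admissible weights as $\mathcal{j} = \frac{m-1}{2} - ls$ subject to exactly the same constraints $m + s \equiv 1 \pmod 2$, $1 \leq m \leq p-1$, $0 \leq s \leq q-1$. Under the substitution $(m, s) = (i, j)$ the two parametrizations coincide verbatim, and Remark \ref{remj} guarantees that each admissible weight is hit by a unique pair, so the two sets of scalars agree. Combining these observations yields the equivalence: $L(\mathcal{l}, \mathcal{j})$ is a weak $L_{\widehat{\mathfrak{g}}}(\mathcal{l}, 0)$-module iff $f(\mathcal{j}) = 0$ iff $\mathcal{j}$ is admissible. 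There is no real obstacle here since the heavy lifting has already been done in Theorem \ref{thm1}; the only task is the verification that the root set of $f$ matches the admissible parametrization, which is immediate from inspection.
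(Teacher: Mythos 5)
Your proposal is correct and follows exactly the route the paper intends: the corollary is stated as an immediate consequence of Theorem \ref{thm1} and Proposition \ref{propadm}, and you supply precisely the missing details (identifying $L(\mathcal{l},\mathbb{C}_{\mathcal{j}})$ with $L(\mathcal{l},\mathcal{j})$ and matching the root set of $f$ with the admissible weights). No issues.
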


Let $\mathcal{O}_{\mathcal{l}}$ be the subcategory of the weak $L_{\widehat{\mathfrak{g}}}(\mathcal{l},0)$-module category
such that $M$ is an object in $\mathcal{O}_{\mathcal{l}}$ if and only if $M\in \mathcal{O}$ as $\hat{\mathfrak{g}}$-module.
In \cite{W}, Wood showed that $\mathcal{O}_{\mathcal{l}}$ is semisimple.

\begin{thm}\label{thmo}
{\rm\cite{W}}.
 The weak $L_{\widehat{\mathfrak{g}}}(\mathcal{l},0)$-modules in the category $\mathcal{O}$ are completely reducible,
 i.e., $\mathcal{O}_{\mathcal{l}}$ is semisimple. Moreover,
 the irreducible modules correspond to $L(\mathcal{l},\mathcal{j})$,
 where $\mathcal{j}$ is admissible.
\end{thm}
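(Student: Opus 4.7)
The plan is to combine Corollary \ref{coro1} with the standard structure of category $\mathcal{O}$ for $\hat{\mathfrak{g}}$. First, any irreducible object of $\mathcal{O}_{\mathcal{l}}$ is in particular an irreducible $\hat{\mathfrak{g}}$-module in category $\mathcal{O}$; by standard $\mathcal{O}$-theory it is an irreducible highest weight module $L(\mathcal{l},\mathcal{j})$, and Corollary \ref{coro1} forces $\mathcal{j}$ to be admissible. Conversely, each $L(\mathcal{l},\mathcal{j})$ with admissible $\mathcal{j}$ lies in $\mathcal{O}_{\mathcal{l}}$. This identifies the irreducibles.

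For complete reducibility, I would first show that every nonzero $M\in\mathcal{O}_{\mathcal{l}}$ contains an irreducible submodule. Since $M\in\mathcal{O}$, there is an $N_+$-singular weight vector $v\in M$ of some weight $\mathcal{j}$; the cyclic $\tilde{\mathfrak{g}}$-submodule $W:=U(\tilde{\mathfrak{g}})v$ is a highest weight quotient of $M(\mathcal{l},\mathcal{j})$ and is itself a weak $L_{\widehat{\mathfrak{g}}}(\mathcal{l},0)$-module. Re-running the computation of Theorem \ref{thm1} on $W$ (which forces the singular vector $v_{0,2}$ of $V(\mathcal{l},\mathbb{C})$ to act as zero) first yields that $\mathcal{j}$ is admissible, and then forces both singular vectors $v_{\mathcal{j},1},v_{\mathcal{j},2}$ of $M(\mathcal{l},\mathcal{j})$ described in Proposition \ref{propsv} to vanish in $W$. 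Since these generate the maximal submodule of $M(\mathcal{l},\mathcal{j})$, one concludes $W\cong L(\mathcal{l},\mathcal{j})$. A Zorn's lemma argument then produces a maximal semisimple submodule $S\subseteq M$, and the remaining task is to show $S=M$.

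The heart of the matter, and the step I expect to be the main obstacle, is showing that $\mathrm{Ext}^1_{\mathcal{O}_{\mathcal{l}}}(L(\mathcal{l},\mathcal{j}_2),L(\mathcal{l},\mathcal{j}_1))=0$ for all admissible pairs; this is precisely what forces $S=M$ in the argument above (apply the existence-of-irreducible-submodule step to $M/S$, then split the resulting extension of $S$ by $L(\mathcal{l},\mathcal{j}_2)$). To handle it, I would suppose a non-split extension $0\to L(\mathcal{l},\mathcal{j}_1)\to E\to L(\mathcal{l},\mathcal{j}_2)\to 0$ and try to lift the highest weight vector of $L(\mathcal{l},\mathcal{j}_2)$ to an $N_+$-annihilated vector in $E$. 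The obstruction sits in a specific weight space of $L(\mathcal{l},\mathcal{j}_1)$ controlled by the singular-vector structure of $M(\mathcal{l},\mathcal{j}_2)$. Using the explicit $F_1(m,s),F_2(m,s)$ of Proposition \ref{propsv}, the parametrisation of Proposition \ref{propadm}, and the uniqueness statement of Remark \ref{remj}, one should be able to show that the relevant weight either does not occur in $L(\mathcal{l},\mathcal{j}_1)$ or that the lift is itself a highest weight vector generating a complementary copy of $L(\mathcal{l},\mathcal{j}_2)$; either conclusion forces the extension to split.
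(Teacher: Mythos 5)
First, a point of comparison: the paper does not actually prove this statement --- it is quoted from Wood \cite{W}, and the authors merely remark afterwards that it can alternatively be recovered from Corollary \ref{coro1} together with the proof of Theorem 4.1 in \cite{KW2}. Your outline is an attempt to reconstruct that second route, so it cannot be checked against an argument in the paper; it has to stand on its own, and as written it has two genuine gaps.

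The first is the claim that ``re-running the computation of Theorem \ref{thm1} on $W$'' forces the singular vectors $v_{\mathcal{j},1},v_{\mathcal{j},2}$ to vanish in $W$, whence $W\cong L(\mathcal{l},\mathcal{j})$. The computation in Theorem \ref{thm1} only ever tests elements against $U^*$, i.e.\ against their projection to the highest weight space; since $v_{\mathcal{j},1}$ and $v_{\mathcal{j},2}$ lie in strictly lower graded components of $M(\mathcal{l},\mathcal{j})$, they pair to zero with $U^*$ whether or not they vanish in $W$, so that computation is silent about them (for the particular quotient $L(\mathcal{l},U)$ this is harmless only because $I$ was \emph{defined} to absorb everything meeting $U$ trivially). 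What the relation $Y(v_{0,2},z)=0$ on $W$ actually gives you is that every singular vector of $W$ again has admissible $h$-weight, with the conformal weight dictated by the Sugawara construction; to conclude that the images of $v_{\mathcal{j},1},v_{\mathcal{j},2}$ must therefore be zero you still need to check that their weights are incompatible with these constraints, which is precisely the bookkeeping carried out in \cite{KW2} (or in Wood's Zhu-algebra computation) and is absent here. The second gap is the one you flag yourself: the vanishing of $\mathrm{Ext}^1$ between the various $L(\mathcal{l},\mathcal{j})$ is the real content of the theorem, and your treatment of it is a plan (``one should be able to show\dots'') rather than a proof. Even the plan is incomplete: lifting the highest weight vector of $L(\mathcal{l},\mathcal{j}_2)$ to a singular vector of $E$ only yields a splitting if you already know that the highest weight submodule it generates is irreducible --- which is the first gap again --- and the self-extension case $\mathcal{j}_1=\mathcal{j}_2$, where the obstruction analysis you describe degenerates, needs a separate argument (typically semisimplicity of $h(0)$ and of $L(0)$ on $E$). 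So the skeleton is the right one, but the two load-bearing steps are not established.
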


Corollary \ref{coro1} can also be obtained by Theorem \ref{thmo}.
Conversely, by Corollary \ref{coro1} and the proof of Theorem 4.1 in \cite{KW2},
we can obtain Theorem \ref{thmo} in a different way.
From Theorem \ref{thmo},
for any $M\in\mathcal{O}_{\mathcal{l}}$,
let $U=\{w\in M\mid N_+w=0\}$,
it is clear that $U$ is a $\mathbb{C}h$-module
and $M\cong L(\mathcal{l},U)$.

\subsection{Category $\mathcal{C}_{\mathcal{l}}$}
\label{sec:3.2}

\begin{defi}
{\em The category $\mathcal{C}_{\mathcal{l}}$ is the subcategory of the weak $L_{\widehat{\mathfrak{g}}}(\mathcal{l},0)$-module category
such that $M$ is an object in $\mathcal{C}_{\mathcal{l}}$ if and only if $a$ acts locally nilpotently on $M$ for all $a\in N_+$.}
\end{defi}

It is clear that the object in $\mathcal{O}_{\mathcal{l}}$ is also an object in $\mathcal{C}_{\mathcal{l}}$.
We will show that $\mathcal{C}_{\mathcal{l}}$ is semisimple. Let
$\Omega(M)=\{v\in M\mid (\mathfrak{g}\otimes t\mathbb{C}[t]).v=0\}$
for a weak $L_{\widehat{\mathfrak{g}}}(\mathcal{l},0)$-module $M$.
We can prove the following proposition.

\begin{prop}\label{prop11}
Let $M$ be a weak $L_{\widehat{\mathfrak{g}}}(\mathcal{l},0)$-module belonging to $\mathcal{C}_{\mathcal{l}}$, then $M$ has a highest weight vector.
\end{prop}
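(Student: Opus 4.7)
The plan is to produce a highest weight vector by peeling off the three layers of nilpotence in $N_+=\mathbb{C}e\oplus\mathbb{C}x\oplus\mathfrak{g}\otimes t\mathbb{C}[t]$. First I locate a nonzero vector in $\Omega(M)$: for any $v\in M$, truncation gives a least $N\geq 0$ with $a(n)v=0$ for all $a\in\mathfrak{g}$ and $n>N$, and I induct on $N$. When $N\geq 1$, I cycle once through a basis $\{h,e,f,x,y\}$ of $\mathfrak{g}$, replacing the current vector $v$ by $a(N)^{k-1}v$ (with $k\geq 1$ minimal such that $a(N)^k v=0$, which exists since $a(N)\in\mathfrak{g}\otimes t\mathbb{C}[t]\subseteq N_+$ is locally nilpotent). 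The identity $[a(N),b(n)]=[a,b](n+N)$ (the central term vanishes as $n+N>0$) produces only modes of index $\geq N+1$, all of which already kill the current vector, so commuting past the $a(N)$'s costs nothing: both the truncation at indices $>N$ and the vanishing $a'(N)(\text{current})=0$ established on previous passes are preserved. After a full cycle the vector is annihilated by all of $\mathfrak{g}(N)$, its truncation index drops to $\leq N-1$, and induction yields a nonzero $u_0\in\Omega(M)$. Next, $\Omega(M)$ is $\mathfrak{g}$-stable by a direct bracket check, and $x\in N_+$ acts locally nilpotently on it, so choosing the largest $j\geq 0$ with $u_1:=x^j u_0\ne 0$ gives $xu_1=0$ and $eu_1=x^2u_1=0$ via $\{x,x\}=2e$. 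Thus $u_1\in W:=\{v\in M\mid N_+v=0\}$ is nonzero.

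It remains to extract an $h$-eigenvector from $W$, and the strategy is to pull back the polynomial relation of Theorem \ref{thm1} from $L(\mathcal{l},U)$ to $M$ itself. Set $U_0:=\mathbb{C}[h]u_1\subseteq W$, regard it as a $B$-module with $N_+$ trivial and $k=\mathcal{l}$, and consider the induced $\tilde{\mathfrak{g}}$-homomorphism $\varphi:M(\mathcal{l},U_0)\to M$, $a\otimes u\mapsto au$. Since the vertex operator superalgebra $L_{\widehat{\mathfrak{g}}}(\mathcal{l},0)$ is generated by $\tilde{\mathfrak{g}}$, every $\tilde{\mathfrak{g}}$-invariant subspace of a weak module is automatically a weak-module submodule; in particular the image $N:=\varphi(M(\mathcal{l},U_0))=U(\tilde{\mathfrak{g}})u_1$ is a weak submodule of $M$. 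As $\varphi$ restricts to an isomorphism $1\otimes U_0\to U_0\subseteq M$, its kernel meets $1\otimes U_0$ trivially, and Lemma \ref{lemI} then forces $\ker\varphi\subseteq I$. Hence $L(\mathcal{l},U_0)=M(\mathcal{l},U_0)/I$ is a $\tilde{\mathfrak{g}}$-quotient of $N$, itself a weak $L_{\widehat{\mathfrak{g}}}(\mathcal{l},0)$-module. Theorem \ref{thm1} now forces $f(h)U_0=0$ in $L(\mathcal{l},U_0)$, and the injection $U_0\hookrightarrow L(\mathcal{l},U_0)$ lifts this back to $f(h)u_1=0$ inside $U_0\subseteq M$. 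By Remark \ref{remj} the roots of $f$ are pairwise distinct, so $h$ is diagonalizable on the finite-dimensional space $U_0$; any $h$-eigenvector $u_2\in U_0$ then satisfies $u_2\in W$ and $hu_2=\lambda u_2$ for some admissible weight $\lambda$, while $k$ acts as $\mathcal{l}$ and $d=-L(0)$ acts on $W$ as a polynomial in $h$ via the Sugawara construction, so $u_2$ is automatically an $H$-eigenvector, hence a highest weight vector of $M$.

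The main obstacle is the third stage: one must justify both that $L(\mathcal{l},U_0)$ inherits a weak $L_{\widehat{\mathfrak{g}}}(\mathcal{l},0)$-module structure (so that Theorem \ref{thm1} may be applied to it, even though $U_0$ is generally infinite-dimensional before cutting down by $f(h)$) and that the equation $f(h)U_0=0$ proven in $L(\mathcal{l},U_0)$ can be pulled back to $U_0\subseteq M$. Both points rest on the maximality property of $I$ established in Lemma \ref{lemI} together with the injection $U_0\hookrightarrow L(\mathcal{l},U_0)$, which together force $\ker\varphi\subseteq I$ and then force $U_0$ to be finite-dimensional after all.
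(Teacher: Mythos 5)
Your proof is correct and follows essentially the same route as the paper's: produce a nonzero vector of $\Omega(M)$ annihilated by $N_+$, induce a map from $M(\mathcal{l},U)$ into $M$, and combine Lemma \ref{lemI} with Theorem \ref{thm1} to get $f(h)U=0$ and hence semisimplicity of $h$ on $U$. The only cosmetic differences are that you replace the paper's appeal to Engel's theorem for $\mathbb{C}e\oplus\mathbb{C}x$ by the identity $x^{2}=e$, and you induce from $U_0=\mathbb{C}[h]u_1$ rather than $U(\mathfrak{g})w$ (which, if anything, is cleaner, since $N_+$ visibly annihilates $\mathbb{C}[h]u_1$).
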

\begin{proof}
Similar to the Proposition 3.6 of \cite{Lin} or the Theorem 3.7 of \cite{DLM1}, we have $\Omega(M)\ne0$.
Since $M$ is an object of $\mathcal{C}_{\mathcal{l}}$,
we have $e$ and $x$ act locally nilpotent on $\Omega(M)$.
Let $\mathfrak{n}_+=\mathbb{C}e\oplus\mathbb{C}x$, then $U(\mathfrak{n}_+)v$ is finite dimensional for any $v\in\Omega(M)$.
Since $\mathfrak{n}_+$ is a nilpotent Lie superalgebra,
then by the Engel theorem for Lie superalgebra (cf. \cite{K}), there exists a nonzero $w\in U(\mathfrak{n}_+)v$ such that $\mathfrak{n}_+.w=0$.
Since $\mathfrak{n}_+.\Omega(M)\subseteq\Omega(M)$, we have $w\in\Omega(M)$, then $N_+.w=0$.
Furthermore,
let $U=U(\mathfrak{g})w$ and $W$ be the weak $L_{\widehat{\mathfrak{g}}}(\mathcal{l},0)$-submodule generated by $U$.
Then $W$ is isomorphic to a quotient module of $M(\mathcal{l},U)$ as $\tilde{\mathfrak{g}}$-module.
From Lemma \ref{lemI}, $L(\mathcal{l},U)$ is isomorphic to some quotient module of $W$ as $\tilde{\mathfrak{g}}$-module,
then $L(\mathcal{l},U)$ is a quotient of $W$ as weak $V(\mathcal{l},\mathbb{C})$-module.
Therefore $L(\mathcal{l},U)$ is a weak $L_{\widehat{\mathfrak{g}}}(\mathcal{l},0)$-module.
From Theorem \ref{thm1}, we have $f(h)U=0$,
then $h$ semisimply acts on $U$, i.e., $w$ is a highest weight vector.
\end{proof}

We now classify simple objects in the category $\mathcal{C}_{\mathcal{l}}$.

\begin{prop}\label{propirr}
Any irreducible weak $L_{\widehat{\mathfrak{g}}}(\mathcal{l},0)$-module belonging to $\mathcal{C}_{\mathcal{l}}$ is of the form
$L(\mathcal{l},\mathcal{j})$ with admissible weight $\mathcal{j}$.
Moreover, up to isomorphism, there are finitely many irreducible weak $L_{\widehat{\mathfrak{g}}}(\mathcal{l},0)$-modules belonging to $\mathcal{C}_{\mathcal{l}}$.
\end{prop}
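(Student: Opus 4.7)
The plan is to combine Proposition \ref{prop11} with Corollary \ref{coro1} and the combinatorial parametrization in Proposition \ref{propadm}. First, given an irreducible weak $L_{\widehat{\mathfrak{g}}}(\mathcal{l},0)$-module $M$ lying in $\mathcal{C}_{\mathcal{l}}$, I would apply Proposition \ref{prop11} to produce a nonzero highest weight vector $w\in M$; that is, a vector annihilated by $N_+$ and satisfying $h.w=\mathcal{j}w$ for some $\mathcal{j}\in\mathbb{C}$. By irreducibility of $M$ as a weak module (and hence as a $\tilde{\mathfrak{g}}$-module, since the $\tilde{\mathfrak{g}}$-action is recovered from $Y_M$ applied to the degree-one elements $a(-1)\textbf{1}$), $M$ is generated by $w$.

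Next I would identify $M$ with $L(\mathcal{l},\mathcal{j})$. Because $w$ is a highest weight vector, there is a surjective $\tilde{\mathfrak{g}}$-module homomorphism $M(\mathcal{l},\mathcal{j})\twoheadrightarrow M$ sending the canonical generator to $w$; irreducibility of $M$ forces the kernel to be the unique maximal submodule of $M(\mathcal{l},\mathcal{j})$, so $M\cong L(\mathcal{l},\mathcal{j})$ as $\tilde{\mathfrak{g}}$-modules. Since $M$ is moreover a weak $L_{\widehat{\mathfrak{g}}}(\mathcal{l},0)$-module, Corollary \ref{coro1} immediately implies that $\mathcal{j}$ must be admissible.

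For the finiteness statement, I would simply invoke Proposition \ref{propadm}(2): the admissible weights $\mathcal{j}$ at level $\mathcal{l}$ are parametrized by pairs $(m,s)\in\mathbb{Z}^{2}$ subject to the constraints $1\le m\le p-1$, $0\le s\le q-1$ and $m+s\equiv 1\pmod{2}$, and Remark \ref{remj} guarantees that this parametrization is injective. Since there are only finitely many such pairs, there are only finitely many admissible weights, hence finitely many irreducible objects of $\mathcal{C}_{\mathcal{l}}$ up to isomorphism.

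The only place that is not essentially automatic is the transition from ``weak module structure'' to ``$\tilde{\mathfrak{g}}$-irreducibility,'' but this is standard: the Jacobi identity for $Y_M$ and the commutator formula for $a(-1)\textbf{1}$-vertex operators show that every weak $L_{\widehat{\mathfrak{g}}}(\mathcal{l},0)$-submodule of $M$ is a $\tilde{\mathfrak{g}}$-submodule and conversely, so there is no real obstacle here. Thus the main content of the proposition is packaged into the earlier results, and the present proof is a short assembly step.
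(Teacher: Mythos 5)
Your proposal is correct and follows essentially the same route as the paper: invoke Proposition \ref{prop11} to obtain a highest weight vector, pass from irreducibility as a weak module to irreducibility as a $\tilde{\mathfrak{g}}$-module to identify $M$ with $L(\mathcal{l},\mathcal{j})$, and apply Corollary \ref{coro1} to conclude that $\mathcal{j}$ is admissible. The only difference is that you spell out the finiteness step via Proposition \ref{propadm}(2) and Remark \ref{remj}, which the paper leaves implicit.
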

\begin{proof}
Let $M$ be an irreducible weak $L_{\widehat{\mathfrak{g}}}(\mathcal{l},0)$-module belonging to $\mathcal{C}_{\mathcal{l}}$,
then by Proposition \ref{prop11}, $M$ contains a highest weight vector $w$.
Since $M$ is an irreducible weak $L_{\widehat{\mathfrak{g}}}(\mathcal{l},0)$-module,
we have $M$ is an irreducible weak $V(\mathcal{l},\mathbb{C})$-module,
then $M$ is an irreducible $\tilde{\mathfrak{g}}$-module generated by $w$.
Therefore $M$ is an irreducible highest weight module of $\tilde{\mathfrak{g}}$. By Corollary \ref{coro1},
$M$ is of the form
$L(\mathcal{l},\mathcal{j})$ for some admissible weight $\mathcal{j}$.
\end{proof}

Then we can prove the category $\mathcal{C}_{\mathcal{l}}$ is semisimple.

\begin{thm}\label{thmcc}
The category $\mathcal{C}_{\mathcal{l}}$ is semisimple.
Moreover,
any object $M$ in $\mathcal{C}_{\mathcal{l}}$ is a direct sum of $L(\mathcal{l},\mathcal{j})$ with admissible weight $\mathcal{j}$.
\end{thm}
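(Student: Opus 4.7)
The plan is to show directly that any $M \in \mathcal{C}_{\mathcal{l}}$ equals the submodule generated by its highest weight subspace $U := \{v \in M : N_+ v = 0\}$, and that $U(\tilde{\mathfrak{g}}) U$ is a direct sum of admissible irreducibles. First I would establish that $f(h) U = 0$: for any $w \in U$, the construction in the proof of Proposition \ref{prop11} produces a weak $L_{\widehat{\mathfrak{g}}}(\mathcal{l},0)$-module $L(\mathcal{l}, U(\mathfrak{g}) w)$, and Theorem \ref{thm1} then forces $f(h) w = 0$. Since Remark \ref{remj} guarantees that $f(h)$ has distinct simple roots, one for each admissible weight, $h$ acts diagonally on $U$ and $U = \bigoplus_{\mathcal{j}\text{ adm.}} U_{\mathcal{j}}$. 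Each basis vector $w \in U_{\mathcal{j}}$ generates a cyclic submodule $U(\tilde{\mathfrak{g}}) w$ which is a quotient of $M(\mathcal{l}, \mathcal{j})$; using that this submodule is itself a weak $L_{\widehat{\mathfrak{g}}}(\mathcal{l},0)$-module, combined with Propositions \ref{propsv} and \ref{proppai1} together with the Theorem \ref{thm1} calculation, one sees that the singular vectors of $M(\mathcal{l}, \mathcal{j})$ vanish there, so $U(\tilde{\mathfrak{g}}) w \cong L(\mathcal{l}, \mathcal{j})$. A standard PBW and weight-space argument then shows that for any basis $\{w_i\}$ of $U$ the sum $U(\tilde{\mathfrak{g}}) U = \sum_i U(\tilde{\mathfrak{g}}) w_i$ is direct.

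Semisimplicity of $\mathcal{C}_{\mathcal{l}}$ therefore reduces to the equality $M = U(\tilde{\mathfrak{g}}) U$. I would argue this by contradiction: if $\bar{M} := M / U(\tilde{\mathfrak{g}}) U$ were nonzero, then local nilpotence of $N_+$ is inherited by the quotient, so $\bar{M} \in \mathcal{C}_{\mathcal{l}}$; Proposition \ref{prop11} would then produce a nonzero highest weight vector $\bar{w} \in \bar{M}$, and lifting $\bar{w}$ to an honest highest weight vector $w \in M$ would force $w \in U \subseteq U(\tilde{\mathfrak{g}}) U$, so $\bar{w} = 0$, a contradiction. This would complete the proof, with the structural decomposition of $M$ as $\bigoplus_i L(\mathcal{l}, \mathcal{j}_i)$ coming from the already-established decomposition of $U(\tilde{\mathfrak{g}}) U$.

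The main obstacle is this lifting, which amounts to the vanishing of $\mathrm{Ext}^1_{\mathcal{C}_{\mathcal{l}}}(L(\mathcal{l}, \mathcal{j}), L(\mathcal{l}, \mathcal{j}'))$ for admissible $\mathcal{j}, \mathcal{j}'$. For any preimage $v$ of $\bar{w}$ one has $N_+ v \subseteq U(\tilde{\mathfrak{g}}) U$, and the task is to produce $s \in U(\tilde{\mathfrak{g}}) U$ with $N_+(v - s) = 0$. I would attack this by inducting on the conformal-weight (or $h$-weight) filtration of $U(\tilde{\mathfrak{g}}) U = \bigoplus_i L(\mathcal{l}, \mathcal{j}_i)$: on each graded piece the equation $N_+ s = N_+ v$ becomes an explicit linear system in finitely many unknowns, and the constraint $f(h) \bar{w} = 0$ (which holds by inheritance and Theorem \ref{thm1} applied to $\bar{M}$), together with the uniqueness of the admissible parametrization in Remark \ref{remj} and the explicit singular-vector formulas of Proposition \ref{propsv}, should supply precisely the relations needed to solve it degree by degree.
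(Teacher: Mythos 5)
Your overall skeleton matches the paper's: locate highest weight vectors, show the socle is a direct sum of admissible $L(\mathcal{l},\mathcal{j})$'s, and derive a contradiction from a highest weight vector of the quotient by lifting it. But the two places where you defer to "one sees" and "should supply precisely the relations needed" are exactly the places where the paper invokes Wood's semisimplicity theorem for the category $\mathcal{O}_{\mathcal{l}}$ (Theorem \ref{thmo}), and your proposed substitutes do not close the gap. First, the irreducibility of each cyclic submodule $U(\tilde{\mathfrak{g}})w$ for $w\in U_{\mathcal{j}}$ does not follow in any direct way from Theorem \ref{thm1} together with Propositions \ref{propsv} and \ref{proppai1}: Theorem \ref{thm1} only controls the action of (the projection of) $F_2(1,0)\mathbf{1}$ on the top level $U$, i.e.\ it decides which $U$ make $L(\mathcal{l},U)$ a weak $L_{\widehat{\mathfrak{g}}}(\mathcal{l},0)$-module; it does not show that the images of both singular vectors $v_{\mathcal{j},1}$ and $v_{\mathcal{j},2}$ vanish in an arbitrary highest-weight quotient of $M(\mathcal{l},\mathcal{j})$ that happens to carry a weak $L_{\widehat{\mathfrak{g}}}(\mathcal{l},0)$-module structure. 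That statement is a special case of Theorem \ref{thmo}, which the paper simply cites: the cyclic submodule lies in $\mathcal{O}_{\mathcal{l}}$, is completely reducible by \cite{W}, and is irreducible because the highest weight space of a Verma module is one-dimensional.

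Second, and more seriously, your final paragraph is a plan rather than an argument. Lifting $\bar{w}$ amounts to the vanishing of $\operatorname{Ext}^1$ between admissible irreducibles inside $\mathcal{C}_{\mathcal{l}}$; the constraint $f(h)\bar{w}=0$ only pins down the $h$-eigenvalue of $\bar{w}$ and says nothing about whether the extension splits. Solving $N_+s=N_+v$ "degree by degree" is precisely a bare-hands $\operatorname{Ext}$ computation, and you give no reason why the resulting linear system is consistent; for non-admissible levels it is not, so some input specific to admissibility is indispensable. The paper's resolution is short: since $N_+$ is finitely generated, $N_+w'$ lands in a finite direct sum $L(\mathcal{l},\mathcal{j}_1)\oplus\cdots\oplus L(\mathcal{l},\mathcal{j}_s)$ of socle constituents, so the submodule generated by $w'$ together with these summands is an object of $\mathcal{O}_{\mathcal{l}}$ and hence completely reducible by Theorem \ref{thmo}; this forces the submodule generated by $w'$ into the socle, the desired contradiction. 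Unless you are prepared to reprove Theorem \ref{thmo} (which the paper notes can be done via Corollary \ref{coro1} and the argument of \cite{KW2}), you should invoke it at both of these points; as written, your proof has a genuine gap.
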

\begin{proof}
Let $M$ be a weak $L_{\widehat{\mathfrak{g}}}(\mathcal{l},0)$-module belonging to $\mathcal{C}_{\mathcal{l}}$.
From Proposition \ref{prop11}, $M$ contains a highest weight vector $w$.
Let $W$ be the weak $L_{\widehat{\mathfrak{g}}}(\mathcal{l},0)$-submodule generated by $w$,
then $W$ is a quotient of certain Verma module $M(\mathcal{l},\mathcal{j}^\prime)$ as $\tilde{\mathfrak{g}}$-module.
Hence $W$ is an object in $\mathcal{O}_{\mathcal{l}}$.
From Theorem \ref{thmo}, it is completely reducible and $W=\bigoplus_{\mathcal{j}} L(\mathcal{l},\mathcal{j})$ for admissible weights $\mathcal{j}$.
But the highest weight subspace of $M(\mathcal{l},\mathcal{j}^\prime)$ is one dimensional,
then $W=L(\mathcal{l},\mathcal{j}^\prime)$ with admissible weight $\mathcal{j}^\prime$.
Let $W^\prime$ be the sum of irreducible weak $L_{\widehat{\mathfrak{g}}}(\mathcal{l},0)$-module of $M$,
then $W^\prime$ is a direct sum of $L(\mathcal{l},\mathcal{j})$ for admissible weights $\mathcal{j}$.

If $W^\prime$ is a proper submodule of $M$,
then $M/W^\prime$ is a weak $L_{\widehat{\mathfrak{g}}}(\mathcal{l},0)$-module belonging to $\mathcal{C}_{\mathcal{l}}$,
thus it contains a highest weight vector $\bar{w^\prime}$ by Proposition \ref{prop11}.
Let $w^\prime$ be a preimage of $\bar{w^\prime}$, then $N_+.w^\prime\subseteq W^\prime$.
Since $N_+$ is finitely generated,
there exist submodules $L(\mathcal{l},\mathcal{j}_1), L(\mathcal{l},\mathcal{j}_2),\cdots, L(\mathcal{l},\mathcal{j}_s)$ of $W^\prime$
such that $$N_+.w^\prime\subseteq L(\mathcal{l},\mathcal{j}_1)\oplus L(\mathcal{l},\mathcal{j}_2)\oplus\cdots\oplus L(\mathcal{l},\mathcal{j}_s).$$
From Theorem \ref{thmo},
the submodule of $M$ generated by $w^\prime$ and $L(\mathcal{l},\mathcal{j}_1)\oplus L(\mathcal{l},\mathcal{j}_2)\oplus\cdots\oplus L(\mathcal{l},\mathcal{j}_s)$ is completely reducible.
Then the submodule of $M$ generated by $w^\prime$ is a direct sum of certain $L(\mathcal{l},\mathcal{j})$ with admissible weights $\mathcal{j}$,
it is a contradiction. Therefore $W^\prime=M$, i.e., $M$ is a direct sum of $L(\mathcal{l},\mathcal{j})$ with admissible weight $\mathcal{j}$.
Then we have that the category $\mathcal{C}_{\mathcal{l}}$ is semisimple.
\end{proof}

As an application of the semisimplicity of $\mathcal{C}_{\mathcal{l}}$,
we prove the category of ordinary modules for $L_{\widehat{\mathfrak{g}}}(\mathcal{l},0)$ is semisimple.

\begin{prop}\label{thmord}
Any ordinary $L_{\widehat{\mathfrak{g}}}(\mathcal{l},0)$-module $M$ is completely reducible.
\end{prop}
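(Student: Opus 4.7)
The plan is to reduce this to Theorem \ref{thmcc} by showing that every ordinary $L_{\widehat{\mathfrak{g}}}(\mathcal{l},0)$-module $M$ belongs to the category $\mathcal{C}_{\mathcal{l}}$, i.e., that every element of $N_+=\mathbb{C}e\oplus\mathbb{C}x\oplus\mathfrak{g}\otimes t\mathbb{C}[t]$ acts locally nilpotently on $M$. I would write the $L(0)$-weight decomposition $M=\bigoplus_{\lambda\in\mathbb{C}}M_\lambda$ with $\dim M_\lambda<\infty$ and $M_\lambda=0$ for $\mathrm{Re}(\lambda)$ sufficiently small, and fix an arbitrary $v=\sum_\lambda v_\lambda$ (finite sum).

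First, for $a(n)$ with $a\in\mathfrak{g}$ and $n\geq 1$, the relation $[L(0),a(n)]=-na(n)$ shows that $a(n)$ strictly lowers the $L(0)$-eigenvalue by $n$. Iterating on each $v_\lambda$ eventually pushes it below the lower bound on the support of $M$, giving $a(n)^{k}v=0$ for $k$ large. This handles local nilpotence of the whole subspace $\mathfrak{g}\otimes t\mathbb{C}[t]$.

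The crux is to handle the zero modes $e$ and $x$, which preserve $L(0)$-weight. Because $a(0)$ commutes with $L(0)$, each finite-dimensional $M_\lambda$ is a module for the zero-mode copy of $\mathfrak{g}=osp(1|2)$, and in particular for its even part $\mathfrak{g}_0\cong sl_2$. By Weyl's complete reducibility theorem, $M_\lambda$ decomposes into finite-dimensional irreducible $sl_2$-modules, so $e$ acts nilpotently on $M_\lambda$. Since the anticommutation $\{x,x\}=2e$ gives $x^2=e$ in $U(\mathfrak{g})$, powers of $x$ reduce to powers of $e$, which yields nilpotence of $x$ on $M_\lambda$ as well. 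Summing over the finitely many nonzero components of $v$ gives local nilpotence of $e$ and $x$ on all of $M$.

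Combining these steps places $M$ in $\mathcal{C}_{\mathcal{l}}$, and Theorem \ref{thmcc} then yields the desired decomposition of $M$ as a direct sum of irreducible modules $L(\mathcal{l},\mathcal{j})$ with admissible weights $\mathcal{j}$. The only step I expect to require care is the nilpotence of the zero-mode generators $e$ and $x$, where the semisimplicity of $sl_2$-representations on each graded piece does the essential work; everything else is formal bookkeeping with the grading.
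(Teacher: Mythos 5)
Your proof is correct and follows essentially the same route as the paper: show the positive modes $a(n)$, $n\geq 1$, are locally nilpotent by the grading, show $e$ and $x$ are locally nilpotent on each finite-dimensional weight space $M_\lambda$, conclude $M\in\mathcal{C}_{\mathcal{l}}$, and invoke Theorem \ref{thmcc}. The only difference is that you spell out why $e$ and $x$ act nilpotently on $M_\lambda$ (Weyl's theorem for the zero-mode $sl_2$ together with $x^2=e$ from $\{x,x\}=2e$), a detail the paper leaves implicit by simply noting that $M_\lambda$ is a finite-dimensional $\mathfrak{g}$-module.
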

\begin{proof}
Let $M$ be an ordinary $L_{\widehat{\mathfrak{g}}}(\mathcal{l},0)$-module, then
$M=\bigoplus_{h\in\mathbb{C}}M_h$ such that
$\mbox{dim}~M_h<\infty$ for any $h\in\mathbb{C}$ and $M_{h+n}=0$ for $n\in\mathbb{Z}$ sufficiently small.
For any $a(n) \in\mathfrak{g}\otimes t\mathbb{C}[t]$,
we have $\mbox{wt}~a(n)w=\mbox{wt}~w-n<\mbox{wt}~w$ for all $w\in M$, then $a(n)$ acts locally nilpotently on $M$.
For any $w\in M_h~(h\in\mathbb{C})$,
since $M$ is a $\tilde{\mathfrak{g}}$-module
and $\mbox{wt}~aw=\mbox{wt}~w$ for any $a \in\mathfrak{g}$,
we have $M_h$ is a finite dimensional $\mathfrak{g}$-module,
then $x$ and $e$ act locally nilpotently on $M_h$.
Therefore $x$ acts locally nilpotently on $M$ for all $x\in N_+$, i.e., $M$ is an object of $\mathcal{C}_{\mathcal{l}}$,
then $M$ is completely reducible by Theorem \ref{thmcc}.
\end{proof}

\begin{prop}\label{propord}
Any irreducible ordinary $L_{\widehat{\mathfrak{g}}}(\mathcal{l},0)$-module is of the form
$L(\mathcal{l},\mathcal{j})$ such that $0\leq \mathcal{j}\leq \frac{p-2}{2}, \mathcal{j}\in\mathbb{Z}$.
Moreover, up to isomorphism, there are finitely many irreducible ordinary $L_{\widehat{\mathfrak{g}}}(\mathcal{l},0)$-modules.
\end{prop}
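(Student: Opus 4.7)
The plan is to combine Proposition \ref{thmord} with the classification of simple objects in $\mathcal{C}_{\mathcal{l}}$ given in Proposition \ref{propirr}. As shown in the proof of Proposition \ref{thmord}, any ordinary $L_{\widehat{\mathfrak{g}}}(\mathcal{l},0)$-module lies in $\mathcal{C}_{\mathcal{l}}$; hence any irreducible ordinary module is an irreducible object of $\mathcal{C}_{\mathcal{l}}$ and, by Proposition \ref{propirr}, must be isomorphic to $L(\mathcal{l},\mathcal{j})$ for some admissible weight $\mathcal{j}$. Thus it remains to pin down which admissible $\mathcal{j}$ can actually occur and to check finiteness.

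Next I would argue that the admissible weight $\mathcal{j}$ of any irreducible ordinary module must be a non-negative integer. The top $L(0)$-eigenspace of $L(\mathcal{l},\mathcal{j})$ contains (indeed is generated by) the irreducible highest weight $\mathfrak{g}$-module of highest weight $\mathcal{j}$. Since every $L(0)$-eigenspace of an ordinary module is finite-dimensional, this $\mathfrak{g}$-module must be finite-dimensional, and for $\mathfrak{g}=osp(1|2)$ this holds if and only if $\mathcal{j}\in\mathbb{N}$.

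Finally, I would use the parametrization of admissible weights from Proposition \ref{propadm} to pin down the allowed range. Writing $\mathcal{j}=\frac{m-1}{2}-\frac{sp}{2q}$ with $1\leq m\leq p-1$, $0\leq s\leq q-1$ and $m+s\equiv 1\pmod{2}$, the integrality condition $\mathcal{j}\in\mathbb{Z}$ becomes $2q\mid q(m-1)-sp$. Using the hypotheses $p\equiv q\pmod{2}$ and $(\frac{p-q}{2},q)=1$, a short case analysis on whether $p,q$ are both odd or both even (along the same lines as Remark \ref{remj}) shows that $s=0$ is forced. Then $m$ is odd with $1\leq m\leq p-1$, so $\mathcal{j}=\frac{m-1}{2}$ ranges over the finite set $\{0,1,2,\ldots,\lfloor\frac{p-2}{2}\rfloor\}$, giving both the claimed form of $\mathcal{j}$ and the finiteness.

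The main obstacle is the arithmetic step in the last paragraph: one must use the coprimality condition on $(\frac{p-q}{2},q)$ to rule out every $s\geq 1$ from producing an integer $\mathcal{j}$ in the admissible range. The remainder is a direct invocation of the structural results already established in this section.
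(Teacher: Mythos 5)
Your proposal is correct and follows the same skeleton as the paper: reduce via Proposition \ref{thmord} and Proposition \ref{propirr} to $L(\mathcal{l},\mathcal{j})$ with $\mathcal{j}$ admissible, force $\mathcal{j}\in\mathbb{Z}$, and then run the arithmetic on $\mathcal{j}=\frac{m-1}{2}-ls$ (which matches the paper's case analysis and correctly forces $s=0$ and $m$ odd). The one place you genuinely diverge is the integrality step: the paper gets $\mathcal{j}\in\mathbb{Z}$ by invoking the classification of irreducible ordinary $V(\mathcal{l},\mathbb{C})$-modules as the modules $L(U)$ induced from finite-dimensional irreducible $\mathfrak{g}$-modules $U$ (the analogue of Theorem 6.2.23 of \cite{LL}), whereas you argue directly that the lowest $L(0)$-eigenspace of $L(\mathcal{l},\mathcal{j})$ is the irreducible highest weight $\mathfrak{g}$-module of weight $\mathcal{j}$, which must be finite-dimensional for an ordinary module, hence $\mathcal{j}\in\mathbb{N}$ for $\mathfrak{g}=osp(1|2)$. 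Your version is more self-contained and avoids citing the classification over the universal affine vertex operator superalgebra; the paper's version gets the same conclusion as a byproduct of a standard structural fact. Both are sound, and the remaining finiteness claim follows in either case from the finiteness of the set of admissible weights.
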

\begin{proof}
Let $U$ be an irreducible finite dimensional $\mathfrak{g}$-module,
then the irreducible quotient $L(U)$ of $\mbox{Ind}_{\mathfrak{g}}^{\hat{\mathfrak{g}}}(U)$
is an irreducible ordinary $V(\mathcal{l},\mathbb{C})$-module (cf. \cite{L}).
It is similar to the proof of Theorem 6.2.23 of \cite{LL},
we have the modules $L(U)$ for irreducible finite dimensional $\mathfrak{g}$-module $U$
exhaust the irreducible ordinary $V(\mathcal{l},\mathbb{C})$-modules up to equivalence.
Let $M$ be an irreducible ordinary $L_{\widehat{\mathfrak{g}}}(\mathcal{l},0)$-module,
then from Proposition \ref{propirr} and Proposition \ref{thmord},
$M$ is of the form
$L(\mathcal{l},\mathcal{j})$ with admissible weight $\mathcal{j}$.
Since $M$ is an irreducible ordinary $V(\mathcal{l},\mathbb{C})$-module,
we have $M$ is of the form $L(U)$ for irreducible finite dimensional $\mathfrak{g}$-module $U$.
Hence $M$ is of the form
$L(\mathcal{l},\mathcal{j})$ with admissible weight $\mathcal{j}\in\mathbb{Z}$.

Since $\mathcal{j}$ is admissible, $\mathcal{j}=\frac{m-1}{2}-ls$,
where $l=\frac{p}{2q},m,s\in \mathbb{Z}$
such that $m+s\equiv1(\mbox{mod}~2)$ and $1\leq m\leq p-1, 0\leq s \leq q-1$,
$p,q$ are positive integers such that $p\geq2, p\equiv q(\mbox{mod}~2)$ and $(\frac{p-q}{2},q)=1$.
If $p,q\in 2\mathbb{Z}+1$, we have $(p,q)=1$, $l\notin \mathbb{Z}$, then $s=0$, $\mathcal{j}=\frac{m-1}{2}$ for $1\leq m\leq p-1,m\in 2\mathbb{Z}+1$.
If $p,q\in 2\mathbb{Z}$, we have $\frac{p-q}{2}\in 2\mathbb{Z}+1$ and $(\frac{p}{2},\frac{q}{2})=1$,
for $\frac{p}{2}\in 2\mathbb{Z}+1$, $l=\frac{p/2}{q}\notin \mathbb{Z}$ and $(\frac{p}{2},q)=1$,
then $s=0$, $\mathcal{j}=\frac{m-1}{2}$ for $1\leq m\leq p-1,m\in 2\mathbb{Z}+1$;
for $\frac{p}{2}\in 2\mathbb{Z}$, $l=\frac{p/4}{q/2}$ and $(\frac{p}{4},\frac{q}{2})=1$,
by $m+s\equiv1(\mbox{mod}~2)$,
$\mathcal{j}=\frac{m-1}{2}-\frac{p}{4}\notin \mathbb{Z}$ for $s=\frac{q}{2}$,
then $s=0$, $\mathcal{j}=\frac{m-1}{2}$ for $1\leq m\leq p-1,m\in 2\mathbb{Z}+1$.
Therefore we have $0\leq \mathcal{j}\leq \frac{p-2}{2}, \mathcal{j}\in\mathbb{Z}$.
\end{proof}

Proposition \ref{thmord} and Proposition \ref{propord} are also obtained in \cite{CGL}
by using the theory of vertex superalgebra extensions.

\subsection{$\mathbb{Q}$-graded vertex operator superalgebras $(L_{\widehat{\mathfrak{g}}}(\mathcal{l},0),\omega_\xi)$}
\label{sec:3.3}

A $\mathbb{Q}$-graded vertex operator superalgebra $V$ is $\mathbb{Q}$-graded by weights instead of $\mathbb{Z}$-graded
(cf. \cite{V}).
Similarly we can define the weak module, $\mathbb{Q}_+$-graded weak $V$-module
and ordinary module for $\mathbb{Q}$-graded vertex operator superalgebra.

\begin{defi}
{\em A $\mathbb{Q}$-graded vertex operator superalgebra $V$
is called {\em rational} if any $\mathbb{Q}_+$-graded weak $V$-module is a direct
sum of irreducible $\mathbb{Q}_+$-graded weak $V$-modules.  }
\end{defi}

Let $(V,Y,\textbf{1},\omega)$ be a $\mathbb{Z}$-graded vertex operator superalgebra,
$h\in V_{(1)}$ be a vector satisfying the following conditions:
\begin{equation}\label{eql}
\begin{aligned}
&[L(m),h_n]=-nh_{m+n}-\frac{m^2+m}{2}\delta_{m+n,0}\kappa_1\mbox{Id}_V,\\
&[h_m,h_n]=2m\delta_{m+n,0}\kappa_2\mbox{Id}_V,
\end{aligned}
\end{equation}
where $\kappa_1,\kappa_2\in\mathbb{C}$.
Assume that $h_0$ acts semisimply on $V$ and the eigenvalues of $h_0$ are rational numbers.
For a rational number $\xi$, set $\omega_\xi=\omega+\frac{\xi}{2}L(-1)h$ and $Y(\omega_\xi,z)=\sum_{n\in\mathbb{Z}}L^\prime(n) z^{-n-2}$.
Then we have
$$L^\prime(n)=L(n)-\frac{\xi}{2}(n+1)h_n~~\mbox{for~~any}~~n\in\mathbb{Z},$$
hence the component operators of $\omega_\xi$
satisfy the Virasoro relations and $L^\prime(-1)=L(-1)=D$.
Since $L(0)$ and $h_0$ are commutative,
each $V_{(n)}$ is a direct sum of eigenspaces of $h_0$.
Set $V_{(m,n)}=\{v\in V_{(m)}\mid h_0v=nv\}$ for $m\in\mathbb{Z},n\in \mathbb{Q}$.
Let
\begin{equation}
V^\prime_{(m)}=\{v\in V\mid L^\prime(0)v=mv\}=\coprod_{s\in\mathbb{Z},t\in \mathbb{Q},s-\frac{\xi t}{2}=m}V_{(s,t)}.
\end{equation}
Then we have the following proposition (cf. \cite{AV,DLM2,Lin}).

\begin{prop}\label{propqvosa}
Let $(V,Y,\textbf{1},\omega)$ be a $\mathbb{Z}$-graded vertex operator superalgebra
of central charge $c$, $h\in V_{(1)}$ be a vector satisfying the conditions (\ref{eql})
and $h_0$ acts semisimply on $V$, the eigenvalues of $h_0$ are rational numbers.
Suppose that $\mbox{dim}~V^\prime_{(m)}<\infty$ for any $m\in\mathbb{Q}$
and $V^\prime_{(m)}=0$ for $m$ sufficiently small.
Then $(V,Y,\textbf{1},\omega_\xi)$ is a $\mathbb{Q}$-graded vertex operator superalgebra of central charge $c-6\xi(\kappa_1+\xi\kappa_2)$.
\end{prop}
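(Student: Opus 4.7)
The plan is to verify directly that $(V, Y, \textbf{1}, \omega_\xi)$ satisfies all axioms of the $\mathbb{Q}$-graded analogue of Definition \ref{defivosa}. Since the underlying vector space $V$, the state-field correspondence $Y$, the vacuum $\textbf{1}$, and the translation operator $D = L(-1)$ are unchanged, axioms (1)--(5) carry over immediately from $(V, Y, \textbf{1}, \omega)$. The condition $L'(-1) = D$ (axiom (7)) is immediate from $L'(n) = L(n) - \tfrac{\xi}{2}(n+1)h_n$ evaluated at $n=-1$, as already noted in the set-up. The essential work is therefore to verify the Virasoro bracket for the components of $\omega_\xi$ with central charge $c' = c - 6\xi(\kappa_1 + \xi\kappa_2)$ (axiom (6)), and the $\mathbb{Q}$-grading property (axiom (8)).

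For axiom (6), I would expand $[L'(m), L'(n)]$ bilinearly using the definition of $L'$ into the four pieces $[L(m), L(n)]$, $[L(m), h_n]$, $[h_m, L(n)]$, and $[h_m, h_n]$. Substituting the Virasoro bracket for $\omega$ and the relations (\ref{eql}) (together with antisymmetry to obtain $[h_m, L(n)] = m\,h_{m+n} + \tfrac{n^2+n}{2}\delta_{m+n,0}\kappa_1$), the non-central $h_{m+n}$ terms from the two cross-brackets combine to $-\tfrac{\xi}{2}(m-n)(m+n+1)h_{m+n}$, which is precisely what is needed so that $(m-n)L(m+n)$ together with these terms equals $(m-n)L'(m+n)$. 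This confirms the non-central part of the Virasoro relation. The $\delta_{m+n,0}$ contributions break into three pieces: $\tfrac{m^3-m}{12}c$ from $[L,L]$, a term from the cross-brackets which specialises at $n=-m$ to $-\tfrac{\xi\kappa_1}{2}(m^3-m)$, and a term from $[h_m, h_{-m}]$ equal to $-\tfrac{\xi^2\kappa_2}{2}(m^3-m)$; together these yield $\tfrac{m^3-m}{12}\bigl[c - 6\xi(\kappa_1 + \xi\kappa_2)\bigr]$, confirming the claimed central charge.

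For axiom (8), setting $m=0$ in (\ref{eql}) gives $[L(0), h_0] = 0$, so $L(0)$ and $h_0$ are simultaneously diagonalisable on $V$. Consequently $L'(0) = L(0) - \tfrac{\xi}{2}h_0$ acts semisimply, with eigenvalues in $\mathbb{Q}$ since those of $L(0)$ lie in $\mathbb{Z}$ and those of $h_0$ in $\mathbb{Q}$ by hypothesis. The decomposition $V = \bigoplus_{m \in \mathbb{Q}} V'_{(m)}$ with $V'_{(m)}$ as in the statement is then automatic, and the finite-dimensionality of each $V'_{(m)}$ together with the lower-bound condition are precisely the standing hypotheses of the proposition.

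The main obstacle I anticipate is the bookkeeping in the central-term computation: one must be careful that the polynomials in $(m,n)$ appearing as central coefficients of $[L(m),h_n]$ and $[h_m,L(n)]$ are $\tfrac{m^2+m}{2}$ and $\tfrac{n^2+n}{2}$ respectively, and then verify that after specialising $n=-m$ the three $\delta_{m+n,0}$ contributions conspire to produce exactly the coefficient $-6\xi(\kappa_1 + \xi\kappa_2)$ in front of $\tfrac{m^3-m}{12}$. No genuinely new ideas beyond the analogous $\mathbb{Z}$-graded and Heisenberg shifts treated in \cite{DLM2,Lin} are required; the novelty lies only in keeping the $\kappa_1$- and $\kappa_2$-contributions distinct so as to land on the precise formula $c - 6\xi(\kappa_1 + \xi\kappa_2)$.
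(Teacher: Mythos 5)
Your verification is correct: the bracket expansion, the identity $n(n+1)-m(m+1)=-(m-n)(m+n+1)$ giving $(m-n)L'(m+n)$, and the three central contributions summing to $\tfrac{m^3-m}{12}\bigl[c-6\xi(\kappa_1+\xi\kappa_2)\bigr]$ all check out, as does the grading argument via $[L(0),h_0]=0$. The paper itself supplies no proof, only the citation to \cite{AV,DLM2,Lin}, and your argument is exactly the standard verification carried out in those references.
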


For the $\mathbb{Z}$-graded vertex operator superalgebra $L_{\widehat{\mathfrak{g}}}(\mathcal{l},0)$,
by the Sugawara construction,
\begin{equation*}
\begin{aligned}
L(n)=&\frac{1}{2\mathcal{l}+3}\sum_{m\in\mathbb{Z}}(\frac{1}{2}:h(m)h(n-m):+:e(m)f(n-m):+:f(m)e(n-m):\\
&-\frac{1}{2}:x(m)y(n-m):+\frac{1}{2}:y(m)x(n-m):).
\end{aligned}
\end{equation*}
A direct calculation shows that
$[L(m),h(n)]=-nh(m+n)$, then $h$ satisfies conditions (\ref{eql})
with $\kappa_1=0, \kappa_2=\mathcal{l}$.
Hence $L_{\widehat{\mathfrak{g}}}(\mathcal{l},0)$ have a $\mathbb{Q}$-gradation by weights with respect to $L^\prime(0)$.
From the construction of $V(\mathcal{l},\mathbb{C})$,
for any $m\in\mathbb{Z}_+$, if $V(\mathcal{l},\mathbb{C})_{(m,n)}\ne 0$,
we have $-2m\leq n\leq 2m$.
Since $L_{\widehat{\mathfrak{g}}}(\mathcal{l},0)$ is a quotient of $V(\mathcal{l},\mathbb{C})$,
for any $m\in\mathbb{Z}_+$, if $L_{\widehat{\mathfrak{g}}}(\mathcal{l},0)_{(m,n)}\ne 0$,
we also have $-2m\leq n\leq 2m$.
Let $0<\xi<1$, for any $m\in\mathbb{Q}$,
since $L_{\widehat{\mathfrak{g}}}(\mathcal{l},0)^\prime_{(m)}=\coprod_{s\in\mathbb{Z},t\in \mathbb{Q},s-\frac{\xi t}{2}=m}L_{\widehat{\mathfrak{g}}}(\mathcal{l},0)_{(s,t)}$,
we have $$\frac{m}{1+\xi}\leq s\leq\frac{m}{1-\xi}$$
for $L_{\widehat{\mathfrak{g}}}(\mathcal{l},0)_{(s,t)}\ne0$,
then $\mbox{dim}~L_{\widehat{\mathfrak{g}}}(\mathcal{l},0)^\prime_{(m)}<\infty$.
Note that $\frac{m}{1+\xi}\leq \frac{m}{1-\xi}$,
we have $L_{\widehat{\mathfrak{g}}}(\mathcal{l},0)^\prime_{(m)}=0$ for $m<0$.
Therefore, by Proposition \ref{propqvosa},
$(L_{\widehat{\mathfrak{g}}}(\mathcal{l},0),\omega_\xi)$ is a $\mathbb{Q}$-graded vertex operator superalgebra
of central charge $c-6\xi^2\mathcal{l}$.

\begin{thm}
Let $0<\xi<1$, $(L_{\widehat{\mathfrak{g}}}(\mathcal{l},0),\omega_\xi)$ is a rational $\mathbb{Q}$-graded vertex operator superalgebra.
Moreover, any $\mathbb{Q}_+$-graded weak $(L_{\widehat{\mathfrak{g}}}(\mathcal{l},0),\omega_\xi)$-module is a direct
sum of $L(\mathcal{l},\mathcal{j})$ with admissible weight $\mathcal{j}$.
\end{thm}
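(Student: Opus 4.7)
The plan is to reduce rationality of $(L_{\widehat{\mathfrak{g}}}(\mathcal{l},0),\omega_\xi)$ to the semisimplicity of $\mathcal{C}_{\mathcal{l}}$ established in Theorem \ref{thmcc}. The underlying vertex superalgebra, namely the pair $(Y,\textbf{1})$, is unchanged when $\omega$ is replaced by $\omega_\xi$; only the grading, given by $L^\prime(0)=L(0)-\tfrac{\xi}{2}h_0$, is different. In particular, every $\mathbb{Q}_+$-graded weak $(L_{\widehat{\mathfrak{g}}}(\mathcal{l},0),\omega_\xi)$-module is automatically a weak $L_{\widehat{\mathfrak{g}}}(\mathcal{l},0)$-module in the original sense, so the statement amounts to identifying the $\mathbb{Q}_+$-graded weak modules with objects of $\mathcal{C}_{\mathcal{l}}$.

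Let $M=\bigoplus_{k\in\mathbb{Q}_+}M(k)$ be a $\mathbb{Q}_+$-graded weak $(L_{\widehat{\mathfrak{g}}}(\mathcal{l},0),\omega_\xi)$-module. The main step is to show that $M$ lies in $\mathcal{C}_{\mathcal{l}}$, i.e.\ that every element of $N_+$ acts locally nilpotently on $M$. For $a\in\mathfrak{g}$ with $h_0$-eigenvalue $\alpha$, the new conformal weight is $\mbox{wt}^\prime a=1-\tfrac{\xi\alpha}{2}$, so the admissibility axiom for the new grading gives
\[
a(n)\,M(k)\subseteq M\bigl(k-n-\tfrac{\xi\alpha}{2}\bigr).
\]
Since $\alpha\in\{-2,-1,0,1,2\}$ and $0<\xi<1$, the shift $n+\tfrac{\xi\alpha}{2}$ is strictly positive for every generator of $N_+$: it equals $\xi$ for $e=e(0)$, $\xi/2$ for $x=x(0)$, and is at least $1-\xi$ for any $a(n)$ with $a\in\mathfrak{g}$ and $n\geq 1$. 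Consequently, iterating such an operator on a homogeneous vector eventually drives the degree below $0$ and hence to $0$; since a general vector has only finitely many homogeneous components, local nilpotence follows.

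With $M\in\mathcal{C}_{\mathcal{l}}$ in hand, Theorem \ref{thmcc} decomposes $M$ as a direct sum of irreducible modules $L(\mathcal{l},\mathcal{j})$ with admissible weights $\mathcal{j}$. Conversely, each $L(\mathcal{l},\mathcal{j})$ with admissible $\mathcal{j}$ carries an $L^\prime(0)$-eigenspace decomposition: the analogous computation applied to generators $a(-n)\in U(N_-)$ acting on the highest weight vector shows that every such operator raises the $L^\prime(0)$-weight by a strictly positive rational number (again using $0<\xi<1$). Hence the $L^\prime(0)$-spectrum on $L(\mathcal{l},\mathcal{j})$ is rational, discrete, and bounded below, so a constant shift yields a $\mathbb{Q}_+$-gradation compatible with the admissibility axiom and exhibits $L(\mathcal{l},\mathcal{j})$ as an irreducible $\mathbb{Q}_+$-graded weak $(L_{\widehat{\mathfrak{g}}}(\mathcal{l},0),\omega_\xi)$-module.

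The principal obstacle is the bookkeeping in the first paragraph: verifying strict positivity of the grading shift for every element of $N_+$ relies on both the bound $|\alpha|\leq 2$ on $h_0$-eigenvalues in $\mathfrak{g}$ and, crucially, the hypothesis $0<\xi<1$. Once this is established the theorem follows immediately from the semisimplicity of $\mathcal{C}_{\mathcal{l}}$.
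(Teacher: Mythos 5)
Your proposal is correct and follows essentially the same route as the paper: one checks that under the new grading every element of $N_+$ strictly lowers the $L^\prime(0)$-degree (using $0<\xi<1$ and the bound on $h_0$-eigenvalues), so any $\mathbb{Q}_+$-graded weak module lies in $\mathcal{C}_{\mathcal{l}}$ and Theorem \ref{thmcc} applies, while each $L(\mathcal{l},\mathcal{j})$ with admissible $\mathcal{j}$ acquires a $\mathbb{Q}_+$-gradation from its $L^\prime(0)$-spectrum. The only cosmetic difference is that the paper phrases the degree computation via root vectors $x_\alpha$ and $\mbox{wt}~x_\alpha=1-\frac{\xi}{2}\alpha(h)$ rather than via the shift $n+\frac{\xi\alpha}{2}$, which is the same calculation.
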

\begin{proof}
Let $M$ be a $\mathbb{Q}_+$-graded weak $(L_{\widehat{\mathfrak{g}}}(\mathcal{l},0),\omega_\xi)$-module,
we denote by $\mbox{deg}~w$ the degree of homogeneous element $w\in W$.
For any $\alpha\in \Delta_{\mathfrak{g}}^+$ and the corresponding root vector $x_\alpha$,
 $$L^\prime(0)x_\alpha=L(0)x_\alpha-\frac{\xi}{2}h_0x_\alpha=(1-\frac{\xi}{2}\alpha(h))x_\alpha.$$
 Since $0<\xi<1$, we have
$\mbox{deg}~x_\alpha w=\mbox{wt}~x_\alpha+\mbox{deg}~w-1<\mbox{deg}~w$ for all $w\in M$,
then $x$ and $e$ act locally nilpotently on $M$.
For any $a(n) \in\mathfrak{g}\otimes t\mathbb{C}[t]$, we have
$$\mbox{deg}~a(n)w=\mbox{deg}~w+\mbox{wt}~a-n-1<\mbox{deg}~w$$
for all $w\in M$, then $a(n)$ acts locally nilpotently on $M$.
Therefore $a$ acts locally nilpotently on $M$ for all $a\in N_+$, i.e., $M$ is an object of $\mathcal{C}_{\mathcal{l}}$.
Then $M$ is completely reducible by Theorem \ref{thmcc}.

Next we prove that $L(\mathcal{l},\mathcal{j})$ is a $\mathbb{Q}_+$-graded weak
$(L_{\widehat{\mathfrak{g}}}(\mathcal{l},0),\omega_\xi)$-module for any admissible weight $j$.
Similar to the proof that $(L_{\widehat{\mathfrak{g}}}(\mathcal{l},0),\omega_\xi)$ is a $\mathbb{Q}$-graded vertex operator superalgebra,
$L(\mathcal{l},\mathcal{j})$ with admissible weight $\mathcal{j}$ is an ordinary $(L_{\widehat{\mathfrak{g}}}(\mathcal{l},0),\omega_\xi)$-module,
hence $L(\mathcal{l},\mathcal{j})$ is a $\mathbb{Q}_+$-graded weak
$(L_{\widehat{\mathfrak{g}}}(\mathcal{l},0),\omega_\xi)$-module.
\end{proof}

\section{$A(V)$-theory for $(L_{\widehat{\mathfrak{g}}}(\mathcal{l},0),\omega_\xi)$}
\label{sec:4}
	\def\theequation{4.\arabic{equation}}
	\setcounter{equation}{0}

In this section,
we first recall the $A(V)$-theory for $\mathbb{Q}$-graded vertex operator superalgebras.
Then we determine the Zhu's algebras $A(L_{\widehat{\mathfrak{g}}}(\mathcal{l},0))$ and their bimodules $A(L(\mathcal{l},\mathcal{j}))$
for $\mathbb{Q}$-graded vertex operator superalgebras $(L_{\widehat{\mathfrak{g}}}(\mathcal{l},0),\omega_\xi)$.
Furthermore, we apply the Zhu's algebras and their bimodules associated to $(L_{\widehat{\mathfrak{g}}}(\mathcal{l},0),\omega_\xi)$
to calculate the fusion rules among the irreducible ordinary modules of $(L_{\widehat{\mathfrak{g}}}(\mathcal{l},0),\omega_\xi)$.

\subsection{$A(V)$-theory for $\mathbb{Q}$-graded vertex operator superalgebra}

Let $V$ be a $\mathbb{Q}$-graded vertex operator superalgebra.
Define a function $\varepsilon$ for all homogeneous elements of $V$ as follows:
\begin{equation*}
\varepsilon(a)=\begin{cases}
1,& \mbox{wt}~a\in\mathbb{Z},\\
0,& \mbox{wt}~a\notin\mathbb{Z}.\\
\end{cases}
\end{equation*}
For any homogeneous element $a\in V$, we define
\begin{equation}
a\ast b=\varepsilon(a)\mbox{Res}_z\frac{(1+z)^{[{\rm wt}~a]}}{z}Y(a,z)b
\end{equation}
for any $b\in V$, where $[\cdot]$ denotes the greatest-integer function.
Then we can extend $\ast$ on $V$.
Let $O(V)$ be the subspace of $V$ linearly spanned by
\begin{equation}
\mbox{Res}_z\frac{(1+z)^{[{\rm wt}~a]}}{z^{1+\varepsilon(a)}}Y(a,z)b
\end{equation}
for any homogeneous element $a\in V$ and for any $b\in V$.
We have
\begin{equation}
\mbox{Res}_z\frac{(1+z)^{[{\rm wt}~a]+m}}{z^{1+\varepsilon(a)+n}}Y(a,z)b\in O(V)
\end{equation}
for $n\geq m\geq0$ (cf. \cite{FZ}).
Let $M$ be any weak $V$-module, we define
\begin{equation}\nonumber
\Omega(M)=\{u\in M\mid a_m u=0~~\mbox{for}~~a\in V, m>\mbox{wt}~a-1\},
\end{equation}
and $o$ to be the linear map from $V$ to $\mbox{End}~\Omega(M)$ such that $o(a)=\varepsilon(a)a_{[{\rm wt}~a]-1}$
for any homogeneous element $a\in V$.
The following theorem were established in \cite{DK}.
\begin{thm}\label{thmzhu}
{\rm (a)} The subspace $O(V)$ is a two-sided ideal of $V$ with respect to the product $\ast$
and $A(V)=V/O(V)$ is an associative superalgebra with identity $\textbf{1}+O(V)$.
Moreover, $\omega+O(V)$ lies in the center of $A(V)$.\\
{\rm (b)} For any weak $V$-module $M$,
$\Omega(M)$ is an $A(V)$-module with $a$ acts as $o(a)$ for $a\in V$.\\
{\rm (c)} There is an induction functor $L$ from the category of $A(V)$-modules to the category of $\mathbb{Q}_+$-graded weak $V$-modules.
Moreover, $\Omega(L(U))=U$ for any $A(V)$-module $U$.\\
{\rm (d)} $\Omega$ and $L$ are inverse bijections between the sets of irreducible modules in each category.\\
\end{thm}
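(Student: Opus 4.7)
The plan is to adapt Zhu's original associative algebra construction for $\mathbb{Z}$-graded vertex operator algebras, incorporating both the Kac--Wang modifications that handle Koszul signs for the super case and the Dong--Li--Mason $\varepsilon$-bookkeeping for $\mathbb{Q}$-gradings. The role of $\varepsilon$ is to annihilate contributions from elements whose conformal weight is not an integer: for such $a$, the naive candidate zero mode $a_{[\mbox{wt}~a]-1}$ shifts $\mathbb{Q}$-degrees by $\mbox{wt}~a-[\mbox{wt}~a]\notin\mathbb{Z}$ and so cannot act sensibly on the bottom of a $\mathbb{Q}_+$-graded module.

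For part (a), the right-ideal property of $O(V)$ under $\ast$ follows from the stated containment $\mbox{Res}_z\frac{(1+z)^{[\mbox{wt}~a]+m}}{z^{1+\varepsilon(a)+n}}Y(a,z)b\in O(V)$ for $n\geq m\geq 0$, which lets one reduce right-products with $O(V)$-elements to sums of such residues. The left-ideal property and associativity of $\ast$ modulo $O(V)$ are then obtained by multiplying the Jacobi identity by $(1+z_1)^{[\mbox{wt}~a]}(1+z_2)^{[\mbox{wt}~b]}$ with appropriate negative powers of $z_0,z_1,z_2$ and extracting residues; the super-sign $(-1)^{|a||b|}$ tracks consistently through the computation. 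Centrality of $\omega+O(V)$ follows from $L(-1)=D$ together with $[L(0),Y(a,z)]=Y(L(0)a,z)+zY(L(-1)a,z)$.

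For part (b), the commutator formula for modes (with appropriate super-signs) shows that $o(a)=\varepsilon(a)a_{[\mbox{wt}~a]-1}$ preserves $\Omega(M)$; the identities $o(a\ast b)=o(a)o(b)$ on $\Omega(M)$ and $o(O(V))\Omega(M)=0$ are then verified by computations parallel to Zhu's, with $\varepsilon$ automatically handling the $\mathbb{Q}$-graded subtleties on both sides.

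For parts (c) and (d), given an $A(V)$-module $U$, I would construct $L(U)$ in two stages: first build a universal $\mathbb{Q}_+$-graded weak $V$-module $\bar L(U)$ generated by $U$---a generalized Verma-type object obtained by quotienting a suitable free tensor product by Jacobi-identity and $L(-1)$-derivative relations---then factor out the maximal submodule intersecting $U$ trivially. The main obstacle is the equality $\Omega(L(U))=U$ in part (c), since a priori the quotient could acquire a strictly larger degree-zero piece. I would handle this via a filtration of $\bar L(U)$ by the $\mathbb{Q}$-valued conformal weight together with a PBW-type argument showing that no vector of $\bar L(U)$ outside $U$ survives as a lowest-weight vector modulo the maximal such submodule. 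With (c) in hand, the bijection in (d) is formal: irreducibility of $M$ forces irreducibility of $\Omega(M)$ because $M=V\cdot\Omega(M)$, and irreducibility of $U$ forces irreducibility of $L(U)$ because any nonzero submodule must meet $\Omega(L(U))=U$ nontrivially.
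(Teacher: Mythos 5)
The paper does not prove this theorem at all: it is quoted verbatim from De Sole--Kac \cite{DK} (with the bimodule analogue deferred to \cite{Li2,FZ,KWa}), so there is no in-paper proof to compare against. Your outline is the standard Zhu/Frenkel--Zhu strategy adapted to the $\mathbb{Q}$-graded super setting, which is exactly what the cited sources do: the $\varepsilon$-factor kills elements of non-integral weight (indeed, taking $b=\textbf{1}$ in the definition of $O(V)$ shows every homogeneous $a$ with $\mathrm{wt}\,a\notin\mathbb{Z}$ already lies in $O(V)$, which is the cleanest way to see that the asymmetric definition $a\ast b=\varepsilon(a)\cdots$ still yields a two-sided ideal), and parts (a), (b) are the usual residue manipulations of the Jacobi identity with Koszul signs. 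For (c), your two-stage construction (universal $\mathbb{Q}_+$-graded object, then quotient by the maximal graded submodule meeting $U$ trivially) is the standard one; the point you correctly identify as the crux, $\Omega(L(U))=U$, is in the literature handled precisely by the mechanism you gesture at: one shows via the grading on the universal enveloping algebra of the Lie (super)algebra of modes that the degree-zero part of the submodule generated by any vacuum-like vector of positive degree is zero, so that submodule intersects $U$ trivially and dies in $L(U)$. Your sketch leaves that computation unexecuted, but the architecture is right, and (d) then follows formally as you say. In short: a faithful reconstruction of the standard argument behind the cited theorem, with no wrong turns, though (c) is an outline rather than a proof.
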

\vspace{-0.5cm}
As a consequence, we have the following proposition.
\begin{prop}\label{prop9}
If $V$ is rational, then $A(V)$ is a semisimple associative superalgebra.
\end{prop}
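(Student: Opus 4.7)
The plan is to derive semisimplicity of $A(V)$ directly from the bijective correspondence between $A(V)$-modules and $\mathbb{Q}_+$-graded weak $V$-modules furnished by Theorem \ref{thmzhu}, together with rationality of $V$. Concretely, I would show that every $A(V)$-module decomposes into a direct sum of simple $A(V)$-modules, which is the relevant notion of semisimplicity here.

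Starting from an arbitrary $A(V)$-module $U$, the first step is to apply the induction functor of Theorem \ref{thmzhu}(c) to produce the $\mathbb{Q}_+$-graded weak $V$-module $L(U)$, together with the identification $\Omega(L(U))=U$. The second step invokes the hypothesis that $V$ is rational: this gives a decomposition
\begin{equation*}
L(U)=\bigoplus_{i\in I}W_i,
\end{equation*}
where each $W_i$ is an irreducible $\mathbb{Q}_+$-graded weak $V$-module.

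The third step is to apply $\Omega$ to this decomposition. Since $\Omega(M)=\{u\in M\mid a_mu=0 \text{ for all } a\in V,\, m>\operatorname{wt}a-1\}$ is defined pointwise by vanishing conditions, it commutes with arbitrary direct sums of weak $V$-modules, so
\begin{equation*}
U=\Omega(L(U))=\bigoplus_{i\in I}\Omega(W_i).
\end{equation*}
By Theorem \ref{thmzhu}(b) each $\Omega(W_i)$ is an $A(V)$-module, and by Theorem \ref{thmzhu}(d), applied to each irreducible $W_i$, each $\Omega(W_i)$ is in fact an irreducible $A(V)$-module. Consequently $U$ is a direct sum of simple $A(V)$-modules. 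Since $U$ was arbitrary, every $A(V)$-module is completely reducible, which is exactly the statement that $A(V)$ is a semisimple associative superalgebra.

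There is no real obstacle: the entire content is packaged into Theorem \ref{thmzhu}, and the only point to verify by hand is the trivial fact that $\Omega$ is additive on direct sums, which is immediate from its definition as a joint kernel of operators. The one conceptual check is to make sure the functor $L$ of Theorem \ref{thmzhu}(c) indeed lands in $\mathbb{Q}_+$-graded weak $V$-modules so that rationality applies, but this is part of the statement of Theorem \ref{thmzhu}(c).
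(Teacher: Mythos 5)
Your proof is correct and follows essentially the same route as the paper: induce $L(U)$ from an arbitrary $A(V)$-module $U$, use rationality to decompose $L(U)$ into irreducibles, and apply $\Omega$ (using $\Omega(L(U))=U$ and Theorem \ref{thmzhu}(b),(d)) to conclude $U$ is completely reducible. The paper states this more tersely; you have merely made explicit the additivity of $\Omega$ over direct sums and the irreducibility of each $\Omega(W_i)$, both of which are correct.
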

\begin{proof}
Let $U$ be a $A(V)$-module, from Theorem \ref{thmzhu} $L(U)$ is a $\mathbb{Q}_+$-graded weak $V$-modules with $\Omega(L(U))=L(U)(0)=U$.
Since $V$ is rational, we have $L(U)$ is complete reducible.
Then $U$ is complete reducible as $A(V)$-module.
Hence $A(V)$ is semisimple.
\end{proof}

For any weak $V$-module $M$, let $O(M)$ be the subspace of $M$ linearly spanned by
\begin{equation}
\mbox{Res}_z\frac{(1+z)^{[{\rm wt}~a]}}{z^{1+\varepsilon(a)}}Y_M(a,z)u
\end{equation}
for any homogeneous element $a\in V$ and for any $u\in M$.
Similar to the Theorem 2.11 in \cite{Li2} (see also \cite{FZ,KWa}),
we have the following theorem.

\begin{thm}\label{thmam}
{\rm (a)} The quotient space $A(M)=M/O(M)$ is an $A(V)$-bimodule with the following left and right actions:
\begin{equation}
\begin{aligned}
&a\ast u=\varepsilon(a){\rm Res}_z\frac{(1+z)^{[{\rm wt}~a]}}{z}Y_M(a,z)u,\\
&u\ast a=(-1)^{|a||u|}\varepsilon(a){\rm Res}_z\frac{(1+z)^{[{\rm wt}~a]-1}}{z}Y_M(a,z)u
\end{aligned}
\end{equation}
for any homogeneous element $a\in V, u\in M$.\\
{\rm (b)} Let $W_1, W_2, W_3$ be irreducible $V$-modules and suppose $V$ is rational.
Then there is a linear isomorphism from the space ${\rm Hom}_{A(V)}(A(W_1)\otimes_{A(V)}W_2(0),W_3(0))$
to the space of intertwining operators of type
$\begin{pmatrix}W_3\\ W_1~~W_2\end{pmatrix}$.
\end{thm}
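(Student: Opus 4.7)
The plan is to follow the strategy of Frenkel--Zhu's bimodule construction (as extended by Li in \cite{Li2} and by Kac--Wang \cite{KWa} to the super setting), adapted to the $\mathbb{Q}$-graded framework that was already used to build $A(V)$ in Theorem \ref{thmzhu}. The essential new bookkeeping is the function $\varepsilon$, but since this was already handled consistently in the algebra case, the same conventions should carry over to the module case with only formal changes.

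For part (a), I would proceed in three steps. First, I would establish the module analogue of the relation
\begin{equation*}
\mbox{Res}_z\frac{(1+z)^{[\mbox{wt}~a]+m}}{z^{1+\varepsilon(a)+n}}Y_M(a,z)u\in O(M),\qquad n\geq m\geq0,
\end{equation*}
by induction on $n-m$, using only the definition of $O(M)$ and the relation $L(-1)u = D(u)$ transported to $Y_M$. Second, to check that the two actions descend to well-defined left and right module structures on $A(M)$, I would verify $(a*b)*u\equiv a*(b*u)$ and $(u*a)*b\equiv u*(a*b)$ modulo $O(M)$ using the Jacobi identity for $Y_M$ (with the standard change of variables converting iterate products into residue identities), tracking the super sign $(-1)^{|a||b|}$. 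Third, I would verify that the two actions commute on $A(M)$, $(a*u)*b\equiv a*(u*b)\pmod{O(M)}$; this is another Jacobi identity computation, the supersign combining with the factor $(-1)^{|a||u|}$ in the right action.

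For part (b), I would construct mutually inverse maps between the two spaces. Given an intertwining operator $\mathcal{Y}$ of type $\binom{W_3}{W_1\,W_2}$, define
\begin{equation*}
\phi_{\mathcal{Y}}(w^1\otimes w^2)=o(w^1)w^2=\varepsilon(w^1)w^1_{[\mbox{wt}~w^1]-1}w^2
\end{equation*}
for homogeneous $w^1\in W_1$ and $w^2\in W_2(0)$; the commutator formula and Jacobi identity force $\phi_{\mathcal{Y}}$ to vanish on $O(W_1)\otimes W_2(0)$ and on the relation $a*w^1\otimes w^2-w^1\otimes a*w^2$, giving a well-defined $A(V)$-bilinear map into $W_3(0)$. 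Conversely, given an $A(V)$-bilinear map $\phi$, I would build an intertwining operator by combining the induction functor $L$ of Theorem \ref{thmzhu}(c) with the rationality of $V$: rationality forces $W_2$ and $W_3$ to coincide with $L(W_2(0))$ and $L(W_3(0))$, and $\phi$ extends by the universal property of the induced module to give the required intertwining operator, after which one checks the two constructions are inverse.

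The main obstacle is the converse direction of (b), namely producing an intertwining operator from a bimodule homomorphism. The standard argument requires the rationality hypothesis essentially, since one must transport the map $\phi$ from lowest-weight subspaces to the whole modules in a way compatible with the Jacobi identity; this involves a delicate induction controlling the action on all weight spaces. The super signs and the $\mathbb{Q}$-grading (entering through $\varepsilon$) add bookkeeping but do not change the logical structure of the argument in \cite{Li2,FZ,KWa}, so the proof reduces to verifying that each step of those arguments goes through formally in the present setting.
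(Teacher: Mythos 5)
Your outline follows exactly the route the paper itself takes: the paper gives no proof of Theorem \ref{thmam} at all, merely asserting that it holds ``similar to Theorem 2.11 in \cite{Li2} (see also \cite{FZ,KWa})'', and your sketch is precisely an adaptation of that standard Frenkel--Zhu/Li bimodule argument (residue identities and the Jacobi identity for part (a); the $o(\cdot)$-map one way and the induction functor plus rationality the other way for part (b)) to the $\mathbb{Q}$-graded super setting. The approaches therefore coincide; the only caveat is that, like the paper, you defer the actual verifications to the cited references rather than carrying them out.
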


We also obtain the following lemma which is similar to the Proposition 1.5.4 in \cite{FZ}.

\begin{lem}\label{lemfz}
Let $V$ be a $\mathbb{Q}$-graded vertex operator superalgebra
and let $M$ be a weak $V$-module with a weak submodule $W$, $I$ be an ideal of $V$.\\
{\rm (a)} As $A(V)$-bimodule $A(M/W)\cong M/(O(M)+W)$.\\
{\rm (b)} $(I+O(V))/O(V)$ is a two sided ideal of $A(V)$ and $A(V/I)\cong A(V)/((I+O(V))/O(V))$.\\
{\rm (c)} If $I\cdot M\subset W$, then $I\ast A(M)\subset(W+O(M))/O(M)$, $A(M)\ast I\subset(W+O(M))/O(M)$
and $A(M/W)\cong A(M)/((W+O(M))/O(M))$ as $A(V/I)$-bimodules.
\end{lem}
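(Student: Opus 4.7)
The plan is to follow the structural pattern of Proposition 1.5.4 in \cite{FZ} and adapt it to the present $\mathbb{Q}$-graded vertex operator superalgebra setting, the only subtlety being the factor $\varepsilon(a)$ that now appears in the definition of $O(M)$. All three statements rest on a single observation: the formation of $O$ commutes with taking a quotient by a submodule / ideal, so that the natural projection induces compatible quotients on the associated Zhu spaces.

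For part (a), I would consider the natural $V$-module projection $\pi\colon M\to M/W$. Because $\pi$ intertwines the module vertex operators, $\pi$ carries each spanning element $\mathrm{Res}_z\frac{(1+z)^{[\mathrm{wt}\,a]}}{z^{1+\varepsilon(a)}}Y_M(a,z)u$ of $O(M)$ onto the corresponding spanning element of $O(M/W)$, and conversely any spanning element of $O(M/W)$ lifts by choosing a preimage of $u$ in $M$. Hence $\pi(O(M))=O(M/W)$, so $\pi$ descends to a surjection $A(M)\twoheadrightarrow A(M/W)$ whose kernel is $(O(M)+W)/O(M)$; the left and right $A(V)$-actions are preserved because the defining formulas for $a\ast u$ and $u\ast a$ are built from the same vertex operators that $\pi$ intertwines.

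Part (b) is the same argument with $M$ replaced by $V$ and $W$ replaced by $I$. Since $I$ is a two-sided ideal of the vertex operator superalgebra $V$, the quotient $V/I$ inherits a vertex operator superalgebra structure and the natural projection is a morphism of vertex operator superalgebras. The same computation yields $O(V/I)=(O(V)+I)/I$, and then the third isomorphism theorem gives $A(V/I)\cong V/(O(V)+I)\cong A(V)/((I+O(V))/O(V))$ as associative superalgebras; in particular $(I+O(V))/O(V)$ is automatically a two-sided ideal of $A(V)$.

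For part (c), the hypothesis $I\cdot M\subset W$ means that for every homogeneous $a\in I$ and $u\in M$ one has $Y_M(a,z)u\in W((z))$. Substituting this into the defining formulas of the left and right actions of $A(V)$ on $A(M)$ shows directly that $a\ast u$ and $u\ast a$ lie in $(W+O(M))/O(M)$, which gives the two containments $I\ast A(M), A(M)\ast I\subset (W+O(M))/O(M)$. Consequently the quotient $A(M)/((W+O(M))/O(M))$ inherits an $A(V)/((I+O(V))/O(V))$-bimodule structure, i.e.\ an $A(V/I)$-bimodule structure by part (b); combining this with part (a) yields the desired isomorphism $A(M/W)\cong A(M)/((W+O(M))/O(M))$ of $A(V/I)$-bimodules. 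The main (and essentially only) technical obstacle is the verification that $O$ commutes with the projection in parts (a) and (b); once that identity is established, all remaining assertions are formal consequences of elementary quotient algebra.
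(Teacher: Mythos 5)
Your proof is correct and follows exactly the route the paper intends: the paper gives no explicit argument but simply states that the lemma is proved as in Proposition 1.5.4 of \cite{FZ}, and your argument is precisely that standard one (the projection commutes with the formation of $O(\cdot)$, after which everything follows from the isomorphism theorems), correctly adapted to the $\mathbb{Q}$-graded setting where the extra factor $\varepsilon(a)$ and the bracket $[\mathrm{wt}\,a]$ cause no difficulty since the quotient maps preserve weights.
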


\subsection{Zhu's algebras $A(L_{\widehat{\mathfrak{g}}}(\mathcal{l},0))$ and their bimodules $A(L(\mathcal{l},\mathcal{j}))$}
\label{sec:4.2}

Take $0<\xi<1$, we have $(L_{\widehat{\mathfrak{g}}}(\mathcal{l},0),\omega_\xi)$
and $(V(\mathcal{l},\mathbb{C}),\omega_\xi)$
are $\mathbb{Q}$-graded vertex operator superalgebras.
First we determine the Zhu's algebra $A(V(\mathcal{l},\mathbb{C}))$ and their bimodules $A(M(\mathcal{l},\mathcal{j}))$
for $(V(\mathcal{l},\mathbb{C}),\omega_\xi)$.

\begin{prop}\label{propom}
Let $M$ be any weak $(V(\mathcal{l},\mathbb{C}),\omega_\xi)$-module.
Then $O(M)=CM$, where
\begin{equation*}
C=\mathbb{C}[t^{-1}](t^{-1}+1)\otimes f+\mathbb{C}[t^{-1}](t^{-1}+1)\otimes y+\mathbb{C}[t^{-1}]t^{-1}\otimes e
+\mathbb{C}[t^{-1}]t^{-1}\otimes x+\mathbb{C}[t^{-1}](t^{-2}+t^{-1})\otimes h.
\end{equation*}
\end{prop}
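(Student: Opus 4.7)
The plan is to prove the two inclusions $C\cdot M\subseteq O(M)$ and $O(M)\subseteq C\cdot M$ separately, exploiting that $V(\mathcal{l},\mathbb{C})$ is strongly generated by the five basic elements $h,e,f,x,y$. First I would record the $\omega_\xi$-weights: since $L'(0)=L(0)-(\xi/2)h_0$ and the eigenvalues of $h_0=\mathrm{ad}\,h$ on $h,e,f,x,y$ are $0,2,-2,1,-1$, the conformal weights are $1,\ 1-\xi,\ 1+\xi,\ 1-\xi/2,\ 1+\xi/2$. With $0<\xi<1$ this gives $\varepsilon(h)=1$, $[\mathrm{wt}\,h]=1$, while $\varepsilon=0$ for $e,f,x,y$ with $[\mathrm{wt}]$ equal to $0,1,0,1$ respectively.

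For $C\cdot M\subseteq O(M)$, I would apply the higher residue identity
\[
\mathrm{Res}_z\frac{(1+z)^{[\mathrm{wt}\,a]+m}}{z^{1+\varepsilon(a)+n}}Y_M(a,z)u\in O(M),\qquad n\geq m\geq 0,
\]
to each of the five generators with $m=0$ and $n=k$ running over $\mathbb{Z}_{\geq 0}$. A direct residue extraction produces $t^{-k-1}\otimes e\cdot u$, $t^{-k-1}\otimes x\cdot u$, $(t^{-k-1}+t^{-k})\otimes f\cdot u$, $(t^{-k-1}+t^{-k})\otimes y\cdot u$ and $(t^{-k-2}+t^{-k-1})\otimes h\cdot u$; as $k$ varies, these span exactly the five summands in the definition of $C\cdot M$. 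For the reverse inclusion $O(M)\subseteq C\cdot M$, I would induct on the PBW length of the homogeneous element $a\in V(\mathcal{l},\mathbb{C})$ appearing in the defining residue. The case $a\in\mathfrak{g}$ is handled by the computation above (together with the $m>0$ terms, which absorb higher-order cross-contributions into the span of $C$). For the inductive step I would write $a=b(-n-1)c$ with $b\in\mathfrak{g}$ and $c$ of shorter PBW length, use Borcherds' iterate formula to express $Y_M(a,z)u$ via a normally ordered product of $Y_M(b,z)$ and $Y_M(c,z)$, and then reorganize the resulting residue into compositions where either $c$ acts on $u$ first (producing by induction an element of $C\cdot M$ onto which $b$ then acts through a generator residue) or vice versa. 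In either order the output lies in $C\cdot M$, after using the commutation relations in $\tilde{\mathfrak{g}}$ to rewrite the resulting products in standard $C$-form.

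The main technical obstacle is bookkeeping the non-additivity of $[\mathrm{wt}\,a]$ and $\varepsilon(a)$ during the PBW induction: when $a=b(-n-1)c$, the exponents in the defining residue for $a$ do not split cleanly into those for $b$ and $c$, so one must combine several instances of the higher residue identity (varying both $m$ and $n$) to produce the needed integer shifts. The cleanest way I see to manage this is to first carry out the argument at the level of the vacuum module, computing $O(V(\mathcal{l},\mathbb{C}))$ explicitly as the two-sided ideal of $V(\mathcal{l},\mathbb{C})$ under $\ast$ spanned by the generator relations derived above, and then transfer to an arbitrary weak module $M$ via the bimodule structure of $A(M)$; since the algebraic content of the residue identities is identical on $V$ and on $M$, the weak-module identity $O(M)=C\cdot M$ follows formally once the universal statement is in hand.
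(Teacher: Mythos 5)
Your proposal is correct and follows essentially the same route as the paper: the same weight computation giving $\varepsilon$ and $[\mathrm{wt}]$ for the five generators, the same residue extractions yielding the five summands of $C$, and the same PBW/closure induction (the paper's set $L$ of elements all of whose defining residues land in $CM$, closed under $a(-m)$ for $a\in\mathfrak{g}$, following DLM Prop.\ 4.1) for the reverse inclusion. The closing suggestion to compute $O(V(\mathcal{l},\mathbb{C}))$ first and "transfer" to $M$ via the bimodule structure is unnecessary (and would not follow formally), but your main induction already runs directly over an arbitrary weak module $M$, so this does not affect the argument.
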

\begin{proof}
Since
\begin{equation*}
\begin{aligned}
  &\mbox{wt}~h(-1)\textbf{1}=1, \mbox{wt}~e(-1)\textbf{1}=1-\xi, \mbox{wt}~f(-1)\textbf{1}=1+\xi,\\ &\mbox{wt}~x(-1)\textbf{1}=1-\frac{1}{2}\xi,\mbox{wt}~y(-1)\textbf{1}=1+\frac{1}{2}\xi,
\end{aligned}
\end{equation*}
we have
\begin{equation}\label{fyexh}
\begin{aligned}
&\mbox{Res}_z\frac{(1+z)^{[{\rm wt}~f]}}{z^{m}}Y_M(f,z)u=(f(-m)+f(1-m))u;\\
&\mbox{Res}_z\frac{(1+z)^{[{\rm wt}~y]}}{z^{m}}Y_M(y,z)u=(y(-m)+y(1-m))u;\\
&\mbox{Res}_z\frac{(1+z)^{[{\rm wt}~e]}}{z^{m}}Y_M(e,z)u=e(-m)u;\\
&\mbox{Res}_z\frac{(1+z)^{[{\rm wt}~x]}}{z^{m}}Y_M(x,z)u=x(-m)u;\\
&\mbox{Res}_z\frac{(1+z)^{[{\rm wt}~h]}}{z^{m+1}}Y_M(h,z)u=(h(-m-1)+h(-m))u
\end{aligned}
\end{equation}
for any positive integer $m$ and for $u\in M$.
It is clear that
all those elements in (\ref{fyexh}) are in $O(M)$.
Let $W$ be the subspace linearly spanned by the elements in (\ref{fyexh}),
then $W=CM$.

Let $L$ be the linear span of homogeneous elements $a$ of $(V(\mathcal{l},\mathbb{C}),\omega_\xi)$
such that for any positive integer $n$,
\begin{equation*}
\mbox{Res}_z\frac{(1+z)^{[{\rm wt}~a]}}{z^{n+\varepsilon(a)}}Y_M(a,z)M\subseteq W.
\end{equation*}
Let $a$ be any homogeneous element of $L$,
similar to the proof of the Proposition 4.1 in \cite{DLM2},
we have $h(-m)a,e(-m)a,f(-m)a,x(-m)a,y(-m)a\in L$ for any positive integer $m$.
Since $\textbf{1}\in L$ and $V(\mathcal{l},\mathbb{C})=U(t^{-1}\mathbb{C}[t^{-1}]\otimes \mathfrak{g})\textbf{1}$,
we have $L=V(\mathcal{l},\mathbb{C})$.
Therefore $O(M)=W=CM$.
\end{proof}

\begin{prop}\label{propav}
The associative superalgebra $A(V(\mathcal{l},\mathbb{C}))$ for
$(V(\mathcal{l},\mathbb{C}),\omega_\xi)$ is isomorphic to the polynomial algebra $\mathbb{C}[t]$.
\end{prop}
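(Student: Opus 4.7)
My approach is to construct and verify an algebra isomorphism $\phi\colon \mathbb{C}[t] \to A(V(\mathcal{l},\mathbb{C}))$ determined by $\phi(t) = h(-1)\textbf{1} + O(V(\mathcal{l},\mathbb{C}))$. The key preparation is Proposition~\ref{propom}, from which one reads off the congruences modulo $O(V(\mathcal{l},\mathbb{C}))$:
\begin{equation*}
e(-m)v \equiv 0,~ x(-m)v \equiv 0,~ f(-m)v \equiv -f(1-m)v,~ y(-m)v \equiv -y(1-m)v,~ h(-m-1)v \equiv -h(-m)v
\end{equation*}
for $m \geq 1$ and $v \in V(\mathcal{l},\mathbb{C})$. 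Since $0 < \xi < 1$, among the five basis vectors of $\mathfrak{g}$ only $h$ carries an integer $\omega_\xi$-weight, so $\varepsilon(h)=1$ while $\varepsilon(e)=\varepsilon(f)=\varepsilon(x)=\varepsilon(y)=0$. This gives $h\ast v=(h(-1)+h(0))v$, and since $h(0)\textbf{1}=0$, an easy induction yields $h^{\ast n}=h(-1)^n\textbf{1}$. Thus $\phi$ is a well-defined algebra homomorphism into the commutative subalgebra of $A(V(\mathcal{l},\mathbb{C}))$ generated by the class of $h$.

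For surjectivity, I would show by induction on the conformal weight that every PBW monomial $a_{i_1}(-n_1)\cdots a_{i_r}(-n_r)\textbf{1}$ is congruent modulo $O(V(\mathcal{l},\mathbb{C}))$ to a polynomial in $h(-1)\textbf{1}$. The outermost creation mode is rewritten using the relations above: a leading $e(-m)$ or $x(-m)$ kills the monomial, a leading $f(-m)$ or $y(-m)$ with $m\geq 2$ reduces to lower index, and a leading $h(-m-1)$ with $m\geq 1$ becomes $-h(-m)$. Once the leading mode is pushed down to $f(-1)$ or $y(-1)$, the congruences $f(-1)v \equiv -f(0)v$, $y(-1)v \equiv -y(0)v$ let one commute the emerging zero mode rightward past the remaining creation modes via the $\tilde{\mathfrak{g}}$-bracket relations, producing strictly lower-weight correction terms (handled by induction) plus a zero-mode action on $\textbf{1}$, which vanishes. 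Ultimately only monomials purely in $h(-1)$ survive. The careful bookkeeping for odd anticommutators involving $x,y$ and for parity signs in the $\ast$-product reductions is the main technical obstacle.

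For injectivity, I would apply $A(V(\mathcal{l},\mathbb{C}))$-modules to separate the classes $\phi(t^n)$. For each $\lambda\in\mathbb{C}$ the generalized Verma module $M(\mathcal{l},\lambda)$ is a weak $V(\mathcal{l},\mathbb{C})$-module, and its highest weight vector $v_\lambda$ lies in $\Omega(M(\mathcal{l},\lambda))$: since $0<\xi<1$, the threshold $m>\text{wt}_\xi(a)-1$ forces $m\geq 0$ for $a\in\{e,x\}$ and $m\geq 1$ for $a\in\{h,f,y\}$, and in every case such modes annihilate $v_\lambda$. Theorem~\ref{thmzhu}(b) then gives $o(h)v_\lambda=h(0)v_\lambda=\lambda v_\lambda$ and hence $\phi(p(t))$ acts on $\mathbb{C}v_\lambda$ as the scalar $p(\lambda)$; letting $\lambda$ range over $\mathbb{C}$ forces $\phi(p)=0\Rightarrow p=0$, completing the isomorphism.
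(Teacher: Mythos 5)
Your proposal is correct, and it splits into a part that mirrors the paper and a part that is genuinely different. The homomorphism $\psi(g)=g(h(-1))\mathbf{1}+O(V(\mathcal{l},\mathbb{C}))$ and the computation $h\ast v=(h(-1)+h(0))v$, $h(0)\mathbf{1}=0$ are exactly the paper's. For bijectivity the paper does both directions at once by pure linear algebra: it observes that $N_-=C\oplus\mathbb{C}f(0)\oplus\mathbb{C}y(0)\oplus\mathbb{C}h(-1)$ with $C$ as in Proposition~\ref{propom}, so that $U(N_-)=U(C)U(\mathbb{C}h(-1))U(\mathbb{C}f(0))U(\mathbb{C}y(0))$, whence $V(\mathcal{l},\mathbb{C})=U(C)U(\mathbb{C}h(-1))\mathbf{1}$ and $O(V(\mathcal{l},\mathbb{C}))=CU(C)U(\mathbb{C}h(-1))\mathbf{1}$, giving $A(V(\mathcal{l},\mathbb{C}))\cong U(\mathbb{C}h(-1))\cong\mathbb{C}[t]$ immediately. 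Your surjectivity argument is a hand-executed version of this same decomposition (your straightening rules are precisely the statement that $C$ plus the zero modes plus $h(-1)$ exhaust $N_-$), and the bookkeeping you defer --- in particular that the congruences survive left multiplication, i.e.\ $[h(-1),C]\subseteq C$ so that $h(-1)O(V)\subseteq O(V)$ --- is exactly what the PBW factorization packages for free; I would recommend adopting the structural form. Your injectivity argument, by contrast, is a genuinely different and independently valid route: you separate the classes $h(-1)^n\mathbf{1}$ by letting them act, via Theorem~\ref{thmzhu}(b), on the highest weight vectors $v_\lambda\in\Omega(M(\mathcal{l},\lambda))$ for all $\lambda\in\mathbb{C}$, so that $\psi(p)$ acts as $p(\lambda)$ and a vanishing polynomial must be zero. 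This buys you independence from the exact shape of the complement of $C$ in $N_-$ at the cost of invoking the module theory; the verification that $v_\lambda\in\Omega(M(\mathcal{l},\lambda))$ should be phrased as $v_\lambda$ spanning the lowest $\mathbb{Q}_+$-graded piece (which uses $0<\xi<1$), since the defining condition of $\Omega$ quantifies over all of $V$ and not just the generators; also note that in the paper's notation $M(\mathcal{l},\lambda)$ is the Verma module induced from $B$, not the generalized Verma module. Both routes are sound.
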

\begin{proof}
Define a linear map
\begin{equation*}
 \begin{aligned}
 \psi:&\mathbb{C}[t]\rightarrow A(V(\mathcal{l},\mathbb{C}))\\
 &g(t)\mapsto g(h(-1))\textbf{1}+O(V(\mathcal{l},\mathbb{C}))
 \end{aligned}
\end{equation*}
for any $g(t)\in \mathbb{C}[t]$.
Since $[h(-1),h(0)]=0$ and $h(0)\textbf{1}=0$,
we have $g(h(-1))\textbf{1}=g(h(-1)+h(0))\textbf{1}$ for any $g(t)\in \mathbb{C}[t]$.
Since $h(-1)\textbf{1}\ast u=(h(-1)+h(0))u$ for any $u\in V(\mathcal{l},\mathbb{C})$, we have
\begin{align*}
  \psi(f(t)g(t))&=f(h(-1))g(h(-1))\textbf{1}+O(V(\mathcal{l},\mathbb{C}))\\
  &=f(h(-1)+h(0))g(h(-1))\textbf{1}+O(V(\mathcal{l},\mathbb{C}))\\
  &=f(h(-1))\textbf{1}\ast g(h(-1))\textbf{1}+O(V(\mathcal{l},\mathbb{C}))=\psi(f(t))\ast\psi(g(t)),
\end{align*}
thus $\psi$ is an superalgebra homomorphism.
By $N_-=C\oplus\mathbb{C}f(0)\oplus\mathbb{C}y(0)\oplus\mathbb{C}h(-1)$,
$$U(N_-)=U(C)U(\mathbb{C}h(-1))U(\mathbb{C}f(0))U(\mathbb{C}y(0)).$$
Then by Proposition \ref{propom}, we have
\begin{equation*}
  O(V(\mathcal{l},\mathbb{C}))=CV(\mathcal{l},\mathbb{C})=CU(N_-)\textbf{1}\cong CU(C)U(\mathbb{C}h(-1)).
\end{equation*}
Hence
$A(V(\mathcal{l},\mathbb{C}))\cong (U(C)U(\mathbb{C}h(-1)))/(CU(C)U(\mathbb{C}h(-1)))\cong U(\mathbb{C}h(-1))\cong \mathbb{C}[t]$.
\end{proof}

\begin{prop}\label{propavam}
The $A(V(\mathcal{l},\mathbb{C}))$-bimodule $A(M(\mathcal{l},\mathcal{j}))$ is isomorphic to
$\mathbb{C}[t_1,t_2]$ with the biaction as follows:
\begin{equation}
t\ast f(t_1,t_2)=(t_1+\mathcal{j}-t_2\frac{\partial}{\partial t_2})f(t_1,t_2), f(t_1,t_2)\ast t=t_1f(t_1,t_2)
\end{equation}
for any $f(t_1,t_2)\in\mathbb{C}[t_1,t_2]$.
\end{prop}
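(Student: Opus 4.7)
The plan is to apply Proposition~\ref{propom} with $M = M(\mathcal{l},\mathcal{j})$, exhibit an explicit spanning set for the quotient $A(M(\mathcal{l},\mathcal{j})) = M(\mathcal{l},\mathcal{j})/CM(\mathcal{l},\mathcal{j})$, and verify that this quotient realizes $\mathbb{C}[t_1,t_2]$ with the specified $A(V(\mathcal{l},\mathbb{C}))$-biaction. By Proposition~\ref{propom}, $O(M(\mathcal{l},\mathcal{j})) = C \cdot M(\mathcal{l},\mathcal{j})$; the defining generators of $C$ then force the reductions $e(-k)u \equiv 0$ and $x(-k)u \equiv 0 \pmod{CM}$ for $k \geq 1$, together with $f(-k)u \equiv (-1)^k f(0)u$, $y(-k)u \equiv (-1)^k y(0)u$, and $h(-k)u \equiv (-1)^{k-1}h(-1)u$. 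Combined with the algebra identity $f(0) = -y(0)^2$ in $U(\mathfrak{g})$ coming from $\{y,y\} = -2f$, and the fact that moving any $e(-k)$ or $x(-k)$ to the leftmost position via $\widetilde{\mathfrak{g}}$-commutators absorbs it into the $CM$ ideal modulo lower-complexity remainders, a double induction on (total negative level, length of PBW monomial) shows that every element of $M(\mathcal{l},\mathcal{j})$ is equivalent modulo $CM$ to a linear combination of $h(-1)^m y(0)^n v_{\mathcal{j}}$ with $m,n \geq 0$.

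This immediately makes the map $\phi: \mathbb{C}[t_1,t_2] \to A(M(\mathcal{l},\mathcal{j}))$ defined by $\phi(t_1^m t_2^n) = h(-1)^m y(0)^n v_{\mathcal{j}} + CM$ surjective. For injectivity, one exploits the $(L(0),h(0))$-bi-eigenvalues: each $h(-1)^m y(0)^n v_{\mathcal{j}}$ carries the distinct bi-eigenvalue $(\Delta_{\mathcal{j}} + m,\, \mathcal{j} - n)$. The subtlety is that generators such as $f(-k-1)+f(-k)$ are not themselves $L(0)$-eigenvectors, so $CM$ is not a priori a bigraded subspace of $M(\mathcal{l},\mathcal{j})$; one must carefully inventory the bigraded components produced by each $C\cdot u$ (or, equivalently, construct an explicit right inverse of $\phi$ using a universal ``test module'' whose top level is $\mathbb{C}[t_2]$ with $h(0)$ acting as $\mathcal{j} - t_2 \partial_{t_2}$) in order to rule out any nontrivial combination of the spanning vectors lying in $CM$. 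This is the main obstacle.

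Finally, to match the $A(V(\mathcal{l},\mathbb{C}))\cong \mathbb{C}[t]$-biactions under $\phi$, apply the Zhu formulas of Theorem~\ref{thmam} with $Y_M(h(-1)\textbf{1},z) = h(z) = \sum_n h(n) z^{-n-1}$: this gives $t \ast u \equiv (h(-1)+h(0))u$ and $u \ast t \equiv h(0) u$ modulo $CM$. Applied to the basis $h(-1)^m y(0)^n v_{\mathcal{j}}$, using $[h(0),h(-1)] = 0$ and $h(0) y(0)^n v_{\mathcal{j}} = (\mathcal{j}-n) y(0)^n v_{\mathcal{j}}$, these actions translate via $\phi$ into the operators on $\mathbb{C}[t_1,t_2]$ asserted in the statement, completing the identification of bimodules.
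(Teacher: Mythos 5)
Your spanning argument is essentially the paper's, but you leave the injectivity of $\phi$ --- which you yourself call ``the main obstacle'' --- unresolved: neither the bigraded-component inventory nor the universal test module is actually constructed. The paper closes this gap much more directly and without any bigrading bookkeeping: since $M(\mathcal{l},\mathcal{j})=U(N_-)v$ is free of rank one over $U(N_-)$, and $N_-=C\oplus\mathbb{C}h(-1)\oplus\mathbb{C}f(0)\oplus\mathbb{C}y(0)$ as vector spaces, a PBW-type factorization gives $U(N_-)=U(C)\,U(\mathbb{C}h(-1))\,U(\mathbb{C}f(0))\,U(\mathbb{C}y(0))$, whence $O(M(\mathcal{l},\mathcal{j}))=CM(\mathcal{l},\mathcal{j})\cong CU(C)\,U(\mathbb{C}h(-1))\,U(\mathbb{C}f(0))\,U(\mathbb{C}y(0))$ and
$M(\mathcal{l},\mathcal{j})/CM(\mathcal{l},\mathcal{j})$ has the monomials $h(-1)^{m_1}y(0)^{m_2}v$ as an actual basis (the $f(0)$-factor being absorbed via $f(0)=-y(0)^2$). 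You should either adopt this argument or supply one of the two strategies you only sketch.

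More seriously, your right-action formula is miscomputed. Applying Theorem \ref{thmam} to $a=h(-1)\textbf{1}$ of weight $1$ gives
\begin{equation*}
u\ast a=\mathrm{Res}_z\,\frac{(1+z)^{0}}{z}\,Y_M(h,z)u=h(-1)u,
\end{equation*}
not $h(0)u$: the residue of $z^{-1}\sum_n h(n)z^{-n-1}u$ picks out $n=-1$. With your formula $u\ast t\equiv h(0)u$, the induced right action on $\mathbb{C}[t_1,t_2]$ would be $\mathcal{j}-t_2\frac{\partial}{\partial t_2}$, which contradicts the asserted $f(t_1,t_2)\ast t=t_1f(t_1,t_2)$; so your final step does not, as claimed, ``translate into the operators asserted in the statement.'' (Your left-action computation $t\ast u\equiv(h(-1)+h(0))u$, yielding $t_1+\mathcal{j}-t_2\frac{\partial}{\partial t_2}$, is correct.) Note also that $h(-1)u$ and $h(0)u$ are genuinely distinct modulo $CM(\mathcal{l},\mathcal{j})$, since $C$ contains $h(-m-1)+h(-m)$ only for $m\geq1$, so this is not repaired by passing to the quotient.
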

\begin{proof}
Let $v$ be a highest weight vector of $M(\mathcal{l},\mathcal{j})$.
Then by Proposition \ref{propom}, we have
\begin{equation*}
  O(M(\mathcal{l},\mathcal{j}))=CM(\mathcal{l},\mathcal{j})=CU(N_-)v\cong CU(C)U(\mathbb{C}h(-1))U(\mathbb{C}f(0))U(\mathbb{C}y(0)).
\end{equation*}
Then
\begin{equation*}
  A(M(\mathcal{l},\mathcal{j}))=\bigoplus_{m_1,m_2\in \mathbb{Z}_+}\mathbb{C}(h(-1)^{m_1}y(0)^{m_2}+ O(M(\mathcal{l},\mathcal{j}))).
\end{equation*}
From Theorem \ref{thmam}, we have
\begin{align*}
&\begin{aligned}
  h(-1)\textbf{1}\ast (h(-1)^{m_1}y(0)^{m_2}v)&=(h(-1)+h(0))h(-1)^{m_1}y(0)^{m_2}v\\
  &=(h(-1)+\mathcal{j}-m_2)h(-1)^{m_1}y(0)^{m_2}v,\\
  \end{aligned} \\
  &(h(-1)^{m_1}y(0)^{m_2}v)\ast h(-1)\textbf{1}=h(-1)h(-1)^{m_1}y(0)^{m_2}v,
\end{align*}
for any $m_1,m_2\in \mathbb{Z}_+$. Define a linear map
\begin{equation*}
 \begin{aligned}
 \varphi:&\mathbb{C}[t_1,t_2]\rightarrow A(M(\mathcal{l},\mathcal{j}))\\
 &t_1^{m_1}t_2^{m_2}\mapsto h(-1)^{m_1}y(0)^{m_2}+O(M(\mathcal{l},\mathcal{j}))~~~~\forall m_1,m_2\in \mathbb{Z}_+,\\
 \end{aligned}
\end{equation*}
it is clear $\varphi$ is a bimodule isomorphic.
\end{proof}

Now we determine the Zhu's algebras $A(L_{\widehat{\mathfrak{g}}}(\mathcal{l},0))$ and their bimodules $A(L(\mathcal{l},\mathcal{j}))$
for $(L_{\widehat{\mathfrak{g}}}(\mathcal{l},0),\omega_\xi)$.

\begin{thm}\label{thmav}
The associative superalgebra $A(L_{\widehat{\mathfrak{g}}}(\mathcal{l},0))$ for
$(L_{\widehat{\mathfrak{g}}}(\mathcal{l},0),\omega_\xi)$
is semisimple and isomorphic to the quotient algebra $\mathbb{C}[t]/\langle f(t)\rangle$ of the polynomial algebra $\mathbb{C}[t]$,
where
\begin{equation}
f(t)=\prod_{i+j\in2\mathbb{Z}+1,0\leq j\leq q-1,1\leq i\leq p-1}(t-\frac{i-1}{2}+jl).
\end{equation}
\end{thm}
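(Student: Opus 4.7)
The plan is to reduce the computation of $A(L_{\widehat{\mathfrak{g}}}(\mathcal{l},0))$ to a quotient of $\mathbb{C}[t]$ via Lemma~\ref{lemfz}(b) and Proposition~\ref{propav}, and then to pin down the defining ideal using semisimplicity of $A(L_{\widehat{\mathfrak{g}}}(\mathcal{l},0))$ together with the classification of its irreducible modules.

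Since $L_{\widehat{\mathfrak{g}}}(\mathcal{l},0)\cong V(\mathcal{l},\mathbb{C})/J$ with $J=U(N_-)v_{0,2}$ (Remark~\ref{remj=0}), Lemma~\ref{lemfz}(b) gives an isomorphism $A(L_{\widehat{\mathfrak{g}}}(\mathcal{l},0))\cong A(V(\mathcal{l},\mathbb{C}))/\bar{J}$, where $\bar{J}=(J+O(V(\mathcal{l},\mathbb{C})))/O(V(\mathcal{l},\mathbb{C}))$. Composing with the isomorphism $\psi:\mathbb{C}[t]\xrightarrow{\sim}A(V(\mathcal{l},\mathbb{C}))$, $g(t)\mapsto g(h(-1))\mathbf{1}+O(V(\mathcal{l},\mathbb{C}))$ from Proposition~\ref{propav}, we obtain $A(L_{\widehat{\mathfrak{g}}}(\mathcal{l},0))\cong\mathbb{C}[t]/I$ for some ideal $I$, which is a commutative quotient of $\mathbb{C}[t]$.

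By the rationality of $(L_{\widehat{\mathfrak{g}}}(\mathcal{l},0),\omega_\xi)$ established in Subsection~\ref{sec:3.3} and by Proposition~\ref{prop9}, $A(L_{\widehat{\mathfrak{g}}}(\mathcal{l},0))$ is semisimple, hence isomorphic to $\mathbb{C}^N$ and thus $I=\langle g(t)\rangle$ for some squarefree polynomial $g(t)$. By Theorem~\ref{thmzhu}(d) its irreducible modules biject with the irreducible $\mathbb{Q}_+$-graded weak $(L_{\widehat{\mathfrak{g}}}(\mathcal{l},0),\omega_\xi)$-modules, shown in Subsection~\ref{sec:3.3} to be precisely $\{L(\mathcal{l},\mathcal{j})\}_{\mathcal{j}\text{ admissible}}$. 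The top level $\mathbb{C}v_{\mathcal{j}}$ of $L(\mathcal{l},\mathcal{j})$ is an $A(L_{\widehat{\mathfrak{g}}}(\mathcal{l},0))$-module on which $t$ acts as $o(h(-1)\mathbf{1})=h(0)=\mathcal{j}$, and by Remark~\ref{remj} these values are pairwise distinct; so the roots of $g$ are precisely the admissible weights. A direct re-indexing $(m,s)\leftrightarrow(i,j)$ between Proposition~\ref{propadm} and the definition of $f(t)$ identifies $\prod_{\mathcal{j}\text{ admissible}}(t-\mathcal{j})=f(t)$ up to a nonzero scalar, whence $A(L_{\widehat{\mathfrak{g}}}(\mathcal{l},0))\cong\mathbb{C}[t]/\langle f(t)\rangle$, with semisimplicity automatic as $f(t)$ is squarefree.

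The main obstacle would be a direct identification of $I=\langle f(t)\rangle$, which would require computing the image of the singular vector $v_{0,2}=F_2(1,0)\mathbf{1}$ in $A(V(\mathcal{l},\mathbb{C}))$ as a polynomial in $h(-1)\mathbf{1}$ via the projection formulas of Proposition~\ref{proppai}, paralleling the derivation of $f(h)$ at the end of the proof of Theorem~\ref{thm1}. The argument sketched above circumvents this explicit calculation by leveraging the rationality and the classification of irreducibles from Section~\ref{sec:3}, reducing the heart of the proof to the bookkeeping in the re-indexing step.
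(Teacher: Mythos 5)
Your proposal is correct and follows essentially the same route as the paper: both reduce to $A(V(\mathcal{l},\mathbb{C}))\cong\mathbb{C}[t]$ via Lemma~\ref{lemfz} and Proposition~\ref{propav}, and both identify the ideal $I$ through the representation theory (the Zhu correspondence of Theorem~\ref{thmzhu} together with the criterion of Theorem~\ref{thm1}/the classification of irreducibles) rather than by computing the image of $v_{0,2}$ directly. The only cosmetic difference is the order of steps: you invoke semisimplicity first to force $I=\langle g\rangle$ with $g$ squarefree and then match roots with admissible weights, whereas the paper identifies $I=\langle f(t)\rangle$ from Theorem~\ref{thm1} and records semisimplicity at the end.
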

\begin{proof}
Since $(L_{\widehat{\mathfrak{g}}}(\mathcal{l},0),\omega_\xi)$ is a quotient
vertex operator superalgebra of $(V(\mathcal{l},\mathbb{C}),\omega_\xi)$,
then by Lemma \ref{lemfz}, $A(L_{\widehat{\mathfrak{g}}}(\mathcal{l},0))$ is
quotient superalgebra of $A(V(\mathcal{l},\mathbb{C}))\cong \mathbb{C}[t]$.
Set $A(L_{\widehat{\mathfrak{g}}}(\mathcal{l},0))=A(V(\mathcal{l},\mathbb{C}))/I$.
By the proof of Proposition \ref{propav}, any irreducible $A(V(\mathcal{l},\mathbb{C}))$-module $U$ is also an irreducible $\mathbb{C}h$-module.
Then for any irreducible $A(V(\mathcal{l},\mathbb{C}))$-module $U$, by the definition of $L(\mathcal{l}, U)$,
$L(\mathcal{l}, U)$ is the corresponding irreducible $\mathbb{Q}_+$-graded weak $V(\mathcal{l},\mathbb{C})$-module
in Theorem \ref{thmzhu}.
From Theorem \ref{thm1},
$L(\mathcal{l},U)$ is an irreducible $\mathbb{Q}_+$-graded weak $L_{\widehat{\mathfrak{g}}}(\mathcal{l},0)$-module
if and only if $f(h)U=0$ and $U$ is an irreducible $\mathbb{C}h$-module.
$L(\mathcal{l},U)$ is an irreducible $\mathbb{Q}_+$-graded weak $L_{\widehat{\mathfrak{g}}}(\mathcal{l},0)$-module
is equivalent to $U$ is an irreducible $A(L_{\widehat{\mathfrak{g}}}(\mathcal{l},0))$-module, i.e., $I.U=0$.
Then we have $I=\langle f(t)\rangle$.
Since $(L_{\widehat{\mathfrak{g}}}(\mathcal{l},0),\omega_\xi)$ is rational, by Proposition \ref{prop9},
$A(L_{\widehat{\mathfrak{g}}}(\mathcal{l},0))$ is semisimple.
\end{proof}

Let $B_0=\mathbb{C}(t^{-1}+1)\otimes f+\mathbb{C}(t^{-1}+1)\otimes y+(t^{-2}+t^{-1})\mathbb{C}[x^{-1}]\otimes \mathfrak{g}$.
Then $B_0$ is an ideal of $N_-$, we denote
$L_0=N_-/B_0=\mathbb{C}T_e+\mathbb{C}T_f+\mathbb{C}T_h+\mathbb{C}T_x+\mathbb{C}T_y,$
where $T_e=e(-1)+B_0,T_f=f(0)+B_0,T_h=h(-1)+B_0,T_x=x(-1)+B_0,T_y=y(0)+B_0$
satisfies the following anticommutation relations:
\begin{equation}\nonumber
\begin{aligned}
&[T_e,T_f]=T_h,~[T_h,T_e]=-2T_e,~[T_h,T_f]=2T_f,\\
&[T_h,T_x]=-T_x,~[T_e,T_x]=0,~[T_f,T_x]=T_y,\\
&[T_h,T_y]=T_y,~[T_e,T_y]=-T_x,~[T_f,T_y]=0,\\
&\{T_x,T_x\}=-2T_e,\{T_x,T_y\}=T_h,\{T_y,T_y\}=-2T_f.
\end{aligned}
\end{equation}
Let $\pi_1$ be the natural quotient map from $U(N_-)$ onto $U(L_0)$.
Set $P_1(\alpha)=T_xT_y-\alpha$ and $Q_1(\alpha)=T_yT_x+\alpha$ for $\alpha\in\mathbb{C}$.
We have the following commutation relations in $U(L_0)$.

\begin{prop}\label{propl0}
For any $\alpha,\beta,\gamma\in\mathbb{C}$, we have
\begin{align*}
&[P_1(\alpha),P_1(\beta)]=0,[P_1(\alpha),Q_1(\beta)]=0,[Q_1(\alpha),Q_1(\beta)]=0,\\
&{T_e}^\gamma P_1(\alpha)=P_1(\alpha-\gamma){T_e}^\gamma, {T_e}^\gamma Q_1(\alpha)=Q_1(\alpha+\gamma){T_e}^\gamma,\\
&{T_f}^\gamma P_1(\alpha)=P_1(\alpha+\gamma){T_f}^\gamma, {T_f}^\gamma Q_1(\alpha)=Q_1(\alpha-\gamma){T_f}^\gamma,\\
&{T_x}P_1(\alpha)=Q_1(1-\alpha){T_x}, {T_x}Q_1(\alpha)=P_1(-\alpha){T_x}, \\
&T_y P_1(\alpha)=Q_1(-\alpha)T_y, T_y Q_1(\alpha)=P_1(1-\alpha)T_y,\\
&{T_e}^\gamma T_y=-Q_1(\gamma)T_x{T_e}^{\gamma-1},T_x{T_e}^\gamma T_y=P_1(-\gamma){T_e}^\gamma,\\
&{T_f}^\gamma T_x=-P_1(\gamma)T_y{T_f}^{\gamma-1},T_y{T_f}^\gamma T_x=Q_1(-\gamma){T_f}^\gamma.
\end{align*}
\end{prop}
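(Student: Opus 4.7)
The plan is to mirror the proof of Proposition \ref{proppai1} for $U(\mathfrak{g})$, carrying out the analogous direct computations in $U(L_0)$. The essential tools are the squaring identities $T_x^2=-T_e$ and $T_y^2=-T_f$ (which follow from $\{T_x,T_x\}=-2T_e$ and $\{T_y,T_y\}=-2T_f$), the relation $T_xT_y+T_yT_x=T_h$ from $\{T_x,T_y\}=T_h$, the non-zero odd brackets $[T_e,T_y]=-T_x$ and $[T_f,T_x]=T_y$ (together with $[T_e,T_x]=0$, $[T_f,T_y]=0$), and the expansion formula (2.3) cited from \cite{MFF} for handling the non-integer powers $T_e^\gamma$ and $T_f^\gamma$ (which applies because $T_e$ and $T_f$ are even).

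The identities $[P_1(\alpha),P_1(\beta)]=0$ and $[Q_1(\alpha),Q_1(\beta)]=0$ are trivial. For $[P_1(\alpha),Q_1(\beta)]=[T_xT_y,T_yT_x]$, one substitutes $T_y^2=-T_f$ and $T_x^2=-T_e$ to reduce the claim to $T_xT_fT_x=T_yT_eT_y$; this in turn is verified by moving $T_f$ past $T_x$ via $[T_f,T_x]=T_y$ on one side and $T_e$ past $T_y$ via $[T_e,T_y]=-T_x$ on the other.

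For the four conjugation relations $T_e^\gamma P_1(\alpha)=P_1(\alpha-\gamma)T_e^\gamma$, $T_e^\gamma Q_1(\alpha)=Q_1(\alpha+\gamma)T_e^\gamma$, $T_f^\gamma P_1(\alpha)=P_1(\alpha+\gamma)T_f^\gamma$, $T_f^\gamma Q_1(\alpha)=Q_1(\alpha-\gamma)T_f^\gamma$, the key observations are
\[
[T_e,T_xT_y] = T_x[T_e,T_y] = -T_x^2 = T_e, \quad [T_e,T_yT_x] = [T_e,T_y]T_x = T_e,
\]
\[
[T_f,T_xT_y] = [T_f,T_x]T_y = T_y^2 = -T_f, \quad [T_f,T_yT_x] = T_y[T_f,T_x] = -T_f,
\]
so $(\mathrm{ad}\,T_e)^j$ and $(\mathrm{ad}\,T_f)^j$ annihilate both $T_xT_y$ and $T_yT_x$ for $j\geq 2$. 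The binomial series in (2.3) therefore truncates after the first correction and immediately produces the claimed scalar shifts of $\alpha$. The next four formulas ($T_xP_1(\alpha)=Q_1(1-\alpha)T_x$ and its three siblings) follow from single-line expansions using $T_x^2=-T_e$, $T_y^2=-T_f$, and $[T_y,T_e]=T_x$, $[T_x,T_f]=-T_y$; for instance, $T_xP_1(\alpha)=-T_eT_y-\alpha T_x$ while $Q_1(1-\alpha)T_x=-T_yT_e+(1-\alpha)T_x$, and the two agree via $[T_y,T_e]=T_x$.

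The four mixed identities in the last two lines are obtained by first establishing, via (2.3) applied to $(a,b)=(T_e,T_y)$ or $(T_f,T_x)$, the expansions $T_e^\gamma T_y = T_yT_e^\gamma - \gamma T_xT_e^{\gamma-1}$ and $T_f^\gamma T_x = T_xT_f^\gamma + \gamma T_yT_f^{\gamma-1}$; substituting $T_x^2=-T_e$ (resp.\ $T_y^2=-T_f$) rewrites each right-hand side as $-Q_1(\gamma)T_xT_e^{\gamma-1}$ (resp.\ $-P_1(\gamma)T_yT_f^{\gamma-1}$), and left-multiplying by $T_x$ or $T_y$ then yields $T_xT_e^\gamma T_y=P_1(-\gamma)T_e^\gamma$ and $T_yT_f^\gamma T_x=Q_1(-\gamma)T_f^\gamma$. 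The main obstacle is purely bookkeeping: no step is conceptually deep, but the relations of $L_0$ differ from those of $\mathfrak{g}$ by several signs (notably $P_1(\alpha)+Q_1(\alpha)=T_h$ rather than $P(\alpha)+Q(-\alpha)=h$), so each $\alpha$-argument must be double-checked against the $\mathfrak{g}$-case of Proposition \ref{proppai1} to avoid a stray sign.
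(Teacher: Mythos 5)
Your proposal is correct: the paper states Proposition \ref{propl0} without proof (it is simply the $U(L_0)$ analogue of Proposition \ref{proppai1}), and your direct verification from the relations $T_x^2=-T_e$, $T_y^2=-T_f$, $T_xT_y+T_yT_x=T_h$ together with the truncating binomial expansion (2.3) is exactly the intended argument; I checked each identity and the computations go through. One tiny quibble in your closing remark: in fact $P(\alpha)+Q(\alpha)=h$ in $U(\mathfrak{g})$ just as $P_1(\alpha)+Q_1(\alpha)=T_h$ in $U(L_0)$ (the stated contrast with $P(\alpha)+Q(-\alpha)$ is off), but this side comment does not affect any step of the proof.
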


Similar to the Lemma 4.2 in \cite{IK}, we have the following proposition by induction.

\begin{prop}\label{propl01}
The following projection formulas hold:
\begin{equation*}
\begin{aligned}
&\pi_1(F_1(m,s))=\theta_{(m,s)}\prod_{j=1}^s\prod_{i=1}^m\{\prod_{i+j\in2\mathbb{Z}}P_1(\frac{i}{2}+jl)\prod_{i+j\in2\mathbb{Z}+1}Q_1(-\frac{i-1}{2}-jl)\}{T_y}^m,\\
&\pi_2(F_2(m,s))=\theta_{(p-m,q-s)}\prod_{j=1}^{q-s-1}\prod_{i=1}^{p-m}\{\prod_{i+j\in2\mathbb{Z}}Q_1(\frac{i}{2}+jl)\prod_{i+j\in2\mathbb{Z}+1}P_1(-\frac{i-1}{2}-jl)\}{T_x}^{p-m},
\end{aligned}
\end{equation*}
where $\theta_{(m,s)}=(-1)^{\frac{m(1+(-1)^m)+s(1+(-1)^s)}{4}}$.
\end{prop}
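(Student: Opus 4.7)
The plan is to mimic the inductive proof of Proposition~\ref{proppai} (i.e., Lemma~4.2 in \cite{IK}), replacing each application of a commutation relation in $U(\mathfrak{g})$ by the corresponding relation in $U(L_0)$ given in Proposition~\ref{propl0}. Since $\pi_1(e(-1))=T_e$, $\pi_1(y(0))=T_y$, and $\pi_1(y(0)^2)=T_y^2=-T_f$, the image $\pi_1(F_1(m,s))$ is an alternating product in $T_y,T_f,T_e$ of the same shape as $F_1(m,s)$, up to an overall sign coming from rewriting $(T_y^2)^\gamma$ as $(-T_f)^\gamma$. I would induct on $s$: at each outer layer I peel off a factor of the form $T_e^\delta T_y(T_y^2)^\gamma$ and move it across using the identities ${T_e}^\gamma T_y=-Q_1(\gamma)T_x{T_e}^{\gamma-1}$ and $T_x T_e^\gamma T_y=P_1(-\gamma)T_e^\gamma$ from Proposition~\ref{propl0}, which play exactly the same role as $e^\gamma y=Q(\gamma)xe^{\gamma-1}$ and $xe^\gamma y=P(-\gamma)e^\gamma$ in the proof of Proposition~\ref{proppai}.

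Inductively, the inner product over $i$ at a fixed level $j$ produces the factor $\prod_{i=1}^m\{\prod_{i+j\in 2\mathbb{Z}}P_1(\frac{i}{2}+jl)\prod_{i+j\in 2\mathbb{Z}+1}Q_1(-\frac{i-1}{2}-jl)\}$, and the outer induction on $j$ from $1$ up to $s$ assembles the full double product followed by $T_y^m$, mirroring the structure of Proposition~\ref{proppai}. The formula for $\pi_1(F_2(m,s))$ is proven symmetrically, with $T_x,T_e$ and $T_y,T_f$ swapping roles (together with $P_1\leftrightarrow Q_1$), and the dual identities ${T_f}^\gamma T_x=-P_1(\gamma)T_y{T_f}^{\gamma-1}$ and $T_y T_f^\gamma T_x=Q_1(-\gamma)T_f^\gamma$ driving the recursion, with an induction on $q-s-1$.

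The main obstacle, and the reason for the sign factor $\theta_{(m,s)}$ (which is absent from Proposition~\ref{proppai}), is careful bookkeeping of the extra minus signs that appear in $U(L_0)$ relative to $U(\mathfrak{g})$: the defining flip $T_x^2=-T_e$ (equivalently $\{T_x,T_x\}=-2T_e$), the powers $(T_y^2)^\gamma=(-T_f)^\gamma$, and the additional minus signs in ${T_e}^\gamma T_y=-Q_1(\gamma)T_x{T_e}^{\gamma-1}$ and ${T_f}^\gamma T_x=-P_1(\gamma)T_y{T_f}^{\gamma-1}$. At each of the $sm$ recursive steps (respectively $(q-s-1)(p-m)$ for the $F_2$ identity), one such minus sign may be introduced, and their total product must equal $\theta_{(m,s)}=(-1)^{\frac{m(1+(-1)^m)+s(1+(-1)^s)}{4}}$ (respectively $\theta_{(p-m,q-s)}$). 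Verifying this parity count against the stated closed form is the main combinatorial check; once the base case ($s=0$ for $F_1$ or $s=q-1$ for $F_2$) is handled and the sign tally is confirmed, the remaining recursion proceeds exactly as in \cite{IK}.
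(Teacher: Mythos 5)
Your proposal takes essentially the same route as the paper, which proves this proposition only by asserting that it follows ``by induction, similar to Lemma 4.2 in \cite{IK}'' --- that is, by rerunning the MFF-style recursion of Proposition \ref{proppai} with the $U(L_0)$ relations of Proposition \ref{propl0} in place of those of Proposition \ref{proppai1} and keeping track of the resulting signs, exactly as you outline. One small correction to your sign accounting: $y^2=-f$ already holds in $U(\mathfrak{g})$ (since $\{y,y\}=-2f$), so $(T_y^2)^\gamma=(-T_f)^\gamma$ is not an ``extra'' sign relative to the even case; the genuine sources of $\theta_{(m,s)}$ are $T_x^2=-T_e$ versus $x^2=e$, the minus in ${T_e}^\gamma T_y=-Q_1(\gamma)T_x{T_e}^{\gamma-1}$ versus $e^\gamma y=Q(\gamma)xe^{\gamma-1}$, and the opposite signs of the constants in $P_1(\alpha)=T_xT_y-\alpha$, $Q_1(\alpha)=T_yT_x+\alpha$ compared with $P(\alpha)=xy+\alpha$, $Q(\alpha)=yx-\alpha$.
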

\begin{thm}\label{prop510}
Let $\mathcal{j}=\frac{m-1}{2}-ls$ be an admissible weight.
Then the $A(L_{\widehat{\mathfrak{g}}}(\mathcal{l},0))$-bimodule $A(L(\mathcal{l},\mathcal{j}))$ is isomorphic to the quotient space of $\mathbb{C}[t_1,t_2]$ modulo the subspace
\begin{equation}
\mathbb{C}[t_1,t_2]t_2^m+\mathbb{C}[t_1]f_{\mathcal{j},0}(t_1,t_2)+\mathbb{C}[t_1]f_{\mathcal{j},1}(t_1,t_2)+\cdots+\mathbb{C}[t_1]f_{\mathcal{j},m-1}(t_1,t_2)
\end{equation}
where $$f_{\mathcal{j},n}(t_1,t_2)= (\prod_{i+n+j\in2\mathbb{Z},0\leq j\leq q-s-1, 0\leq i\leq p-m-1}(t_1-\frac{i+n}{2}+jl))t_2^{n}.$$ The left and right actions of $A(L_{\widehat{\mathfrak{g}}}(\mathcal{l},0))$ on $A(L(\mathcal{l},\mathcal{j}))$ are given by Proposition \ref{propavam}.
\end{thm}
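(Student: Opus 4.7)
The plan is to invoke Lemma \ref{lemfz}(c) with $V=V(\mathcal{l},\mathbb{C})$, $M=M(\mathcal{l},\mathcal{j})$, $W=J$ the maximal proper submodule of $M(\mathcal{l},\mathcal{j})$, and $I$ the maximal ideal of $V(\mathcal{l},\mathbb{C})$ cutting out $L_{\widehat{\mathfrak{g}}}(\mathcal{l},0)$ (the condition $I\cdot M\subseteq J$ holds because $M/J=L(\mathcal{l},\mathcal{j})$ is a weak $L_{\widehat{\mathfrak{g}}}(\mathcal{l},0)$-module by Corollary \ref{coro1}). This identifies $A(L(\mathcal{l},\mathcal{j}))$ with $A(M(\mathcal{l},\mathcal{j}))/\bar{J}$ as an $A(L_{\widehat{\mathfrak{g}}}(\mathcal{l},0))$-bimodule, where $\bar{J}=(J+O(M(\mathcal{l},\mathcal{j})))/O(M(\mathcal{l},\mathcal{j}))$ sits inside $A(M(\mathcal{l},\mathcal{j}))\cong\mathbb{C}[t_1,t_2]$ (Proposition \ref{propavam}). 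By Proposition \ref{propsv}, $J=U(N_-)v_{\mathcal{j},1}+U(N_-)v_{\mathcal{j},2}$ with $v_{\mathcal{j},i}=F_i(m,s)v$, so $\bar{J}$ is spanned by the images $[bv_{\mathcal{j},i}]$ for $b\in U(N_-)$ and $i=1,2$.

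To compute these images I would use the projection $\pi_1:U(N_-)\to U(L_0)$ and the explicit formulas of Proposition \ref{propl01}, combined with the commutation rules of Proposition \ref{propl0}. The key observation is that in $A(M(\mathcal{l},\mathcal{j}))$ the generators $T_e=e(-1)+B_0$ and $T_x=x(-1)+B_0$ annihilate $v$ (because $e(-1),x(-1)\in C$), while $T_hv$ and $T_yv$ correspond to $t_1$ and $t_2$; consequently $P_1(\alpha)v$ reduces to $t_1-\alpha$ and $Q_1(\alpha)v$ reduces to the scalar $\alpha$. The formula $\pi_1(F_1(m,s))=\theta_{(m,s)}\prod_{i,j}(P_1\text{ or }Q_1)\cdot T_y^m$ places $T_y^m$ at the right; applying the identities $T_yP_1(\alpha)=Q_1(-\alpha)T_y$ and $T_yQ_1(\alpha)=P_1(1-\alpha)T_y$ to commute the $T_y$'s through the shift operators and then evaluating on $v$, one obtains $[v_{\mathcal{j},1}]$ as a nonzero scalar multiple of $g(t_1)t_2^m$ for an explicit polynomial $g(t_1)$. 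Analogously, evaluating $T_y^n\cdot\pi_1(F_2(m,s))=T_y^n\cdot\theta_{(p-m,q-s)}\prod_{i,j}(Q_1\text{ or }P_1)\cdot T_x^{p-m}$ on $v$ produces $[y(0)^nv_{\mathcal{j},2}]$ as a scalar multiple of $f_{\mathcal{j},n}(t_1,t_2)$.

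For the spanning claim, applying $h(-1)^a$ and $y(0)^b$ to $v_{\mathcal{j},1}$ (these operations stay inside $J$) and taking images, together with the right action of $t=h(-1)\mathbf{1}+O(V(\mathcal{l},\mathbb{C}))$ on $A(M(\mathcal{l},\mathcal{j}))$ (which is multiplication by $t_1$ per Proposition \ref{propavam}), generates all of $\mathbb{C}[t_1,t_2]t_2^m$. On the $v_{\mathcal{j},2}$ side, the elements $h(-1)^ay(0)^nv_{\mathcal{j},2}$ with $0\le n\le m-1$ produce, modulo $\mathbb{C}[t_1,t_2]t_2^m$, the full subspace $\mathbb{C}[t_1]f_{\mathcal{j},n}(t_1,t_2)$; for $n\ge m$ the $T_y^n T_x^{p-m}$ factor reduces (after commuting through and using $T_x^2v=0$ in $A(M(\mathcal{l},\mathcal{j}))$) to an expression of $T_y$-degree at least $m$, whose image already lies in $\mathbb{C}[t_1,t_2]t_2^m$ and so adds nothing new.

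The main technical obstacle will be the explicit polynomial identity: one must verify that the factors emerging from moving the $T_y^m$ (respectively $T_y^n$) past the ordered product of $P_1,Q_1$ operators and then acting on $v$ assemble into exactly the stated products defining $f_{\mathcal{j},n}$. This requires careful bookkeeping of how the arguments of $P_1$ and $Q_1$ shift by $\pm 1$ (or flip type) each time a $T_y$ crosses them, and of how the outer index ranges $1\le i\le p-m$, $1\le j\le q-s-1$ translate, after this shift, into the ranges $0\le i\le p-m-1$, $0\le j\le q-s-1$ appearing in the definition of $f_{\mathcal{j},n}$. The admissibility constraint $m+s\equiv 1\pmod 2$ from Proposition \ref{propadm}, combined with the parity dichotomy used in Proposition \ref{propl01}, should then yield the parity condition $i+n+j\in 2\mathbb{Z}$ and the offset $(i+n)/2-jl$ in the linear factors precisely as claimed.
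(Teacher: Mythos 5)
Your overall strategy---reduce via Lemma \ref{lemfz} and Proposition \ref{propavam} to computing the image of $U(N_-)F_1(m,s)+U(N_-)F_2(m,s)$ inside $U(L_0)/T_xU(L_0)\cong\mathbb{C}[t_1,t_2]$, then commute powers of $T_y$ through the $P_1,Q_1$ factors---is exactly the paper's, but two of your concrete steps fail. First, your ``key observation'' that $P_1(\alpha)$ reduces to $t_1-\alpha$ and $Q_1(\alpha)$ to the scalar $\alpha$ is backwards. The relation available in $A(M(\mathcal{l},\mathcal{j}))$ is not merely $T_xv=0$ but $x(-1)M(\mathcal{l},\mathcal{j})\subseteq O(M(\mathcal{l},\mathcal{j}))$; that is, one reduces modulo the \emph{left} ideal $T_xU(L_0)$. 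Hence $T_xT_y\equiv 0$ while $T_yT_x=T_h-T_xT_y\equiv T_h$, so $P_1(\alpha)=T_xT_y-\alpha\equiv-\alpha$ is the scalar and $Q_1(\alpha)=T_yT_x+\alpha\equiv t_1+\alpha$ is the linear factor. With your convention the surviving linear factors would come from the $P_1$'s instead of the $Q_1$'s, and you would not recover the stated $f_{\mathcal{j},n}$. Relatedly, your indexing on the $F_2$ side is off: for $p>m$ the class of $y(0)^n v_{\mathcal{j},2}$ in $A(M(\mathcal{l},\mathcal{j}))$ is zero for $n<p-m$, because the trailing $T_x^{p-m}$ in $\pi_1(F_2(m,s))$ can be pushed entirely to the left; it is $y(0)^{n+p-m}v_{\mathcal{j},2}$ that produces $f_{\mathcal{j},n}$.

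Second, and more seriously, your spanning argument for $\mathbb{C}[t_1,t_2]t_2^m$ has a gap. The $F_1$-contributions $T_h^bT_y^d\pi_1(F_1(m,s))$ give, modulo $T_xU(L_0)$, only $\mathbb{C}[t_1]\,g_d(t_1)\,t_2^{d+m}$ with $g_d(t_1)=\prod(t_1-\frac{i+d-1}{2}-jl)$ over the index set $i+d+j\in2\mathbb{Z}+1$, $1\leq j\leq s$, $1\leq i\leq m$; when $s\geq1$ this polynomial is nonconstant for infinitely many $d$, so the $F_1$ side alone does not generate $\mathbb{C}[t_1,t_2]t_2^m$ (it does only when $s=0$, where $F_1(m,0)=y(0)^m$). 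The paper closes this by observing that $g_d$ and the $F_2$-polynomial $\prod(t_1-\frac{i+d+m}{2}+jl)$ occurring in the same $t_2$-degree $d+m$ are relatively prime (the roots $\frac{i+d-1}{2}+jl$ and $\frac{i'+n}{2}-j'l$ never coincide), so the two families together exhaust $\mathbb{C}[t_1]t_2^{d+m}$ for every $d\geq0$. This coprimality argument is the crux of the reverse inclusion; without it you only obtain that the ideal is contained in the stated subspace, not the equality asserted in the theorem.
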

\begin{proof}
Note that $C=B_0\oplus \mathbb{C}e(-1)\oplus \mathbb{C}x(-1)$.
Then from Proposition \ref{propom}, we have
\begin{equation*}
O(M(\mathcal{l},\mathcal{j}))=CM(\mathcal{l},\mathcal{j})\cong B_0U(N_-)+e(-1)U(N_-)+x(-1)U(N_-).
\end{equation*}
Since $B_0$ is an ideal of $N_-$, $U(N_-)B_0=B_0U(N_-)$ is an ideal of $U(N_-)$.
From Proposition \ref{propsv}, the maximal proper submodule of $M(\mathcal{l},\mathcal{j})$ is generated by $v_{\mathcal{j},1}$ and $v_{\mathcal{j},2}$.
Then by Lemma \ref{lemfz}, we have
\begin{align*}
A(L(\mathcal{l},\mathcal{j}))&\cong M(\mathcal{l},\mathcal{j})/(CM(\mathcal{l},\mathcal{j})+U(N_-)v_{\mathcal{j},1}+U(N_-)v_{\mathcal{j},2})\\
&\begin{aligned}
\cong &U(N_-)/(B_0U(N_-)+e(-1)U(N_-)+x(-1)U(N_-)+\\&U(N_-)F_1(m,s)+U(N_-)F_2(m,s))
\end{aligned}
\end{align*}
as $A(L(\mathcal{l},0))$-bimodules. Note that $U(N_-)/B_0U(N_-)\cong U(L_0)$. Thus
\begin{equation*}
A(L(\mathcal{l},\mathcal{j}))\cong U(L_0)/(U(L_0)\pi_1(F_1(m,s))+U(L_0)\pi_1(F_2(m,s))+T_xU(L_0)).
\end{equation*}

For any $a,b,d\in \mathbb{Z}_+$, by Proposition \ref{propl0} and Proposition \ref{propl01}, we have
\begin{align*}
&T_x^aT_h^bT_y^d\pi_1(F_1(m,s))\\
&=\theta_{(m,s)}T_x^aT_h^bT_y^d\prod_{j=1}^s\prod_{i=1}^m\{\prod_{i+j\in2\mathbb{Z}}P_1(\frac{i}{2}+jl)\prod_{i+j\in2\mathbb{Z}+1}Q_1(-\frac{i-1}{2}-jl)\}{T_y}^m\\
&=\begin{cases}
\theta_{(m,s)}T_x^a\prod_{j=1}^s\prod_{i=1}^m\{\prod_{i+j\in2\mathbb{Z}}Q_1(\frac{1-i-d}{2}-jl)\prod_{i+j\in2\mathbb{Z}+1}P_1(\frac{i+d}{2}+jl)\}T_h^b{T_y}^{d+m},
~d\notin 2\mathbb{Z}\\
\theta_{(m,s)}T_x^a\prod_{j=1}^s\prod_{i=1}^m\{\prod_{i+j\in2\mathbb{Z}}P_1(\frac{i+d}{2}+jl)\prod_{i+j\in2\mathbb{Z}+1}Q_1(\frac{1-i-d}{2}-jl)\}T_h^b{T_y}^{d+m},
~d\in 2\mathbb{Z}
\end{cases}\\
&=\theta_{(m,s)}T_x^a\prod_{j=1}^s\prod_{i=d+1}^{m+d}\{\prod_{i+j\in2\mathbb{Z}+1}Q_1(-\frac{i-1}{2}-jl)\prod_{i+j\in2\mathbb{Z}}P_1(\frac{i}{2}+jl)\}T_h^b{T_y}^{d+m}\\
&\equiv
\theta_{(m,s)}T_x^a\prod_{j=1}^s\prod_{i=d+1}^{m+d}\{\prod_{i+j\in2\mathbb{Z}+1}(T_h-\frac{i-1}{2}-jl)\prod_{i+j\in2\mathbb{Z}}(-\frac{i}{2}-jl)\}T_h^b{T_y}^{d+m}
~~(\mbox{mod}~T_xU(L_0)).
\end{align*}
Note that $-\frac{i}{2}-jl\ne 0$ for $i+j\in2\mathbb{Z},1\leq j\leq s, d+1\leq i\leq m+d$, we have
\begin{align*}
&U(L_0)\pi_1(F_1(m,s))+T_xU(L_0)\\
=&T_xU(L_0)+\sum_{d=0}^{\infty}\mathbb{C}[T_h](\prod_{i+d+j\in2\mathbb{Z}+1,1\leq j\leq s, 1\leq i\leq m}(T_h-\frac{i+d-1}{2}-jl)){T_y}^{d+m}.
\end{align*}
Similarly, let $a,b,d\in \mathbb{Z}_+$, if $d<p-m$, we have
\begin{align*}
&T_x^aT_h^bT_y^d\pi_1(F_2(m,s))\\
&=\theta_{(p-m,q-s)}T_x^aT_h^bT_y^d\prod_{j=1}^{q-s-1}\prod_{i=1}^{p-m}\{\prod_{i+j\in2\mathbb{Z}}Q_1(\frac{i}{2}+jl)\prod_{i+j\in2\mathbb{Z}+1}P_1(-\frac{i-1}{2}-jl)\}{T_x}^{p-m}\\
&=\theta_{(p-m,q-s)}T_x^aT_h^bT_y^d{T_x}^{p-m}\prod_{j=1}^{q-s-1}\prod_{i=0}^{p-m-1}\{\prod_{i+j\in2\mathbb{Z}}Q_1(-\frac{i}{2}+jl)\prod_{i+j\in2\mathbb{Z}+1}P_1(\frac{i+1}{2}-jl)\}\\
&\begin{aligned}=&\theta_{(p-m,q-s)}T_x^aT_h^b\prod_{i=1}^{d}\{\prod_{i\in 2\mathbb{Z}}P_1(\frac{i}{2})\prod_{i\in 2\mathbb{Z}+1}Q_1(-\frac{i-1}{2})\}
{T_x}^{p-m-d}\prod_{j=1}^{q-s-1}\prod_{i=0}^{p-m-1}\\&\{\prod_{i+j\in2\mathbb{Z}}Q_1(-\frac{i}{2}+jl)\prod_{i+j\in2\mathbb{Z}+1}P_1(\frac{i+1}{2}-jl)\}\end{aligned}\\
&\begin{aligned}=&\theta_{(p-m,q-s)}T_x^aT_h^b{T_x}^{p-m-d}\prod_{i=1+p-m-d}^{p-m}\{\prod_{i\in 2\mathbb{Z}}P_1(\frac{i}{2})\prod_{i\in 2\mathbb{Z}+1}Q_1(-\frac{i-1}{2})\}\\&
\prod_{j=1}^{q-s-1}\prod_{i=0}^{p-m-1}\{\prod_{i+j\in2\mathbb{Z}}Q_1(-\frac{i}{2}+jl)\prod_{i+j\in2\mathbb{Z}+1}P_1(\frac{i+1}{2}-jl)\}\end{aligned}\\
&\begin{aligned}=&\theta_{(p-m,q-s)}T_x^{a+p-m-d}(T_h-p+m+d)^b\prod_{i=1+p-m-d}^{p-m}\{\prod_{i\in 2\mathbb{Z}}P_1(\frac{i}{2})\prod_{i\in 2\mathbb{Z}+1}Q_1(-\frac{i-1}{2})\}\\&
\prod_{j=1}^{q-s-1}\prod_{i=0}^{p-m-1}\{\prod_{i+j\in2\mathbb{Z}}Q_1(-\frac{i}{2}+jl)\prod_{i+j\in2\mathbb{Z}+1}P_1(\frac{i+1}{2}-jl)\}\end{aligned}\\
&\equiv 0~~(\mbox{mod}~T_xU(L_0)).
\end{align*}
If $d=n+p-m$ for some $n\in\mathbb{Z}_+$, we have
\begin{align*}
&T_x^aT_h^bT_y^d\pi_1(F_2(m,s))\\
&\begin{aligned}=&\theta_{(p-m,q-s)}T_x^aT_h^b{T_y}^{n}\prod_{i=1}^{p-m}\{\prod_{i\in 2\mathbb{Z}}P_1(\frac{i}{2})\prod_{i\in 2\mathbb{Z}+1}Q_1(-\frac{i-1}{2})\}
\prod_{j=1}^{q-s-1}\prod_{i=0}^{p-m-1}\\&\{\prod_{i+j\in2\mathbb{Z}}Q_1(-\frac{i}{2}+jl)\prod_{i+j\in2\mathbb{Z}+1}P_1(\frac{i+1}{2}-jl)\}\end{aligned}\\
&=\theta_{(p-m,q-s)}T_x^aT_h^b{T_y}^{n}\prod_{j=0}^{q-s-1}\prod_{i=0}^{p-m-1}\{\prod_{i+j\in2\mathbb{Z}}Q_1(-\frac{i}{2}+jl)\prod_{i+j\in2\mathbb{Z}+1}P_1(\frac{i+1}{2}-jl)\}\\
&=\theta_{(p-m,q-s)}T_x^a\prod_{j=0}^{q-s-1}\prod_{i=n}^{d-1}\{\prod_{i+j\in2\mathbb{Z}}Q_1(-\frac{i}{2}+jl)\prod_{i+j\in2\mathbb{Z}+1}P_1(\frac{i+1}{2}-jl)\}T_h^b{T_y}^{n}\\
&\equiv \theta_{(p-m,q-s)}T_x^a\prod_{j=0}^{q-s-1}\prod_{i=n}^{d-1}\{\prod_{i+j\in2\mathbb{Z}}(T_h-\frac{i}{2}+jl)\prod_{i+j\in2\mathbb{Z}+1}(-\frac{i+1}{2}+jl)\}T_h^b{T_y}^{n}
~~(\mbox{mod}~T_xU(L_0)).
\end{align*}
Note that $-\frac{i+1}{2}+jl\ne 0$ for $i+j\in2\mathbb{Z}+1,0\leq j\leq q-s-1, n\leq i\leq d-1$, we have
\begin{equation*}
\begin{aligned}
&U(L_0)\pi_1(F_2(m,s))+T_xU(L_0)\\
=&T_xU(L_0)+\sum_{n=0}^{\infty}\mathbb{C}[T_h](\prod_{i+n+j\in2\mathbb{Z},0\leq j\leq q-s-1, 0\leq i\leq p-m-1}(T_h-\frac{i+n}{2}+jl)){T_y}^{n}.
\end{aligned}
\end{equation*}
Thus
\begin{equation*}
 \begin{aligned}
 &U(L_0)\pi_1(F_1(m,s))+U(L_0)\pi_1(F_2(m,s))+T_xU(L_0)\\
 \subset &T_xU(L_0)+U(L_0)T_y^m+\sum_{n=0}^{m-1}\mathbb{C}[T_h](\prod_{i+n+j\in2\mathbb{Z},0\leq j\leq q-s-1, 0\leq i\leq p-m-1}(T_h-\frac{i+n}{2}+jl)){T_y}^{n}.
 \end{aligned}
\end{equation*}
On the other hand,
since $\frac{i+d-1}{2}+jl\ne \frac{i^\prime+n}{2}-j^\prime l$ for any $i+j\in2\mathbb{Z}+1,1\leq j\leq s, 1\leq i\leq m,d\in\mathbb{Z}_+$ and
$i^\prime+j^\prime\in2\mathbb{Z},0\leq j^\prime\leq q-s-1, 0\leq i^\prime\leq p-m-1,n\in\mathbb{Z}_+$,
$\prod_{i+d+j\in2\mathbb{Z}+1,1\leq j\leq s, 1\leq i\leq m}(t-\frac{i+d-1}{2}-jl)$
and $\prod_{i+n+j\in2\mathbb{Z},0\leq j\leq q-s-1, 0\leq i\leq p-m-1}(t-\frac{i+n}{2}+jl)$
are relatively prime.
Then we have
\begin{equation*}
\mathbb{C}[T_h]T_y^{m+i}\subseteq U(L_0)\pi_1(F_1(m,s))+U(L_0)\pi_1(F_2(m,s))+T_xU(L_0)
\end{equation*}
for any $i\in\mathbb{Z}_+$. Thus
\begin{equation*}
 \begin{aligned}
 &U(L_0)\pi_1(F_1(m,s))+U(L_0)\pi_1(F_2(m,s))+T_xU(L_0)\\
\supset &T_xU(L_0)+U(L_0)T_y^m+\sum_{n=0}^{m-1}\mathbb{C}[T_h](\prod_{i+n+j\in2\mathbb{Z},0\leq j\leq q-s-1, 0\leq i\leq p-m-1}(T_h-\frac{i+n}{2}+jl)){T_y}^{n}.
 \end{aligned}
\end{equation*}
Set $t_1=T_h, t_2=T_y$.
Then by Proposition \ref{propavam} and Lemma \ref{lemfz},
we complete the proof.
\end{proof}

Now we apply the Zhu's algebras $A(L_{\widehat{\mathfrak{g}}}(\mathcal{l},0))$
and their bimodules $A(L(\mathcal{l},\mathcal{j}))$ which we obtain in Theorem \ref{thmav} and Theorem \ref{prop510} to calculate the fusion rules
among the irreducible ordinary modules of $(L_{\widehat{\mathfrak{g}}}(\mathcal{l},0),\omega_\xi)$ by Theorem \ref{thmam}.

\begin{prop}\label{thm47}
For admissible weights $\mathcal{j}_i=\frac{m_i-1}{2}-ls_i~(i=1,2)$,
the fusion rules are given as follows:
\begin{equation}
L(\mathcal{l},\mathcal{j}_1)\boxtimes L(\mathcal{l},\mathcal{j}_2)=
\sum_{n={\rm max}\{0,m_1+m_2-p\}}^{{\rm min}\{m_1-1,m_2-1\}}L(\mathcal{l},\mathcal{j}_1+\mathcal{j}_2-n)
\end{equation}
if $0\leq s_2\leq q-s_1-1$, and $L(\mathcal{l},\mathcal{j}_1)\boxtimes L(\mathcal{l},\mathcal{j}_2)=0$ otherwise.
\end{prop}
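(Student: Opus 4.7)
\medskip

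\noindent\textbf{Proof proposal.} The plan is to apply Theorem \ref{thmam}(b), so I would compute the fusion rule $N_{L(\mathcal{l},\mathcal{j}_1),L(\mathcal{l},\mathcal{j}_2)}^{L(\mathcal{l},\mathcal{j}_3)}$ as the dimension of $\mathrm{Hom}_{A(V)}\!\bigl(A(L(\mathcal{l},\mathcal{j}_1))\otimes_{A(V)}L(\mathcal{l},\mathcal{j}_2)(0),\,L(\mathcal{l},\mathcal{j}_3)(0)\bigr)$, where $V=L_{\widehat{\mathfrak{g}}}(\mathcal{l},0)$. By Theorem \ref{thmav}, $A(V)\cong \mathbb{C}[t]/\langle f(t)\rangle$ is semisimple, and its simple modules are the one-dimensional spaces $L(\mathcal{l},\mathcal{j})(0)=\mathbb{C}\mathbf{1}_{\mathcal{j}}$ indexed by admissible $\mathcal{j}$, with $t$ acting as $\mathcal{j}$. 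Consequently the fusion rule equals the multiplicity of the character $\mathcal{j}_3$ in the left $A(V)$-module structure of the tensor product.

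First I would compute the tensor product. By Theorem \ref{prop510}, $A(L(\mathcal{l},\mathcal{j}_1))$ is a quotient of $\mathbb{C}[t_1,t_2]$ with right $t$-action by multiplication by $t_1$ (from Proposition \ref{propavam}). Tensoring over $A(V)$ with the one-dimensional $A(V)$-module $\mathbb{C}\mathbf{1}_{\mathcal{j}_2}$ therefore amounts to specializing $t_1\mapsto \mathcal{j}_2$. Since $t_2^{m_1}$ is already a relation, I obtain a quotient of $\mathrm{span}\{1,t_2,\ldots,t_2^{m_1-1}\}$, and the extra relations become
\[
g_n(\mathcal{j}_2)\,t_2^{\,n}=0,\qquad g_n(t_1):=\!\!\prod_{\substack{i+n+j\in 2\mathbb{Z}\\ 0\le j\le q-s_1-1\\ 0\le i\le p-m_1-1}}\!\!\bigl(t_1-\tfrac{i+n}{2}+jl\bigr),\quad 0\le n\le m_1-1.
\]
Thus $t_2^{\,n}$ survives as a nonzero vector precisely when the numerical factor $g_n(\mathcal{j}_2)$ vanishes. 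The left $A(V)$-action, via Proposition \ref{propavam}, acts on such a surviving $t_2^{\,n}$ by the scalar $\mathcal{j}_2+\mathcal{j}_1-n$, so each surviving index $n$ contributes one copy of the character $\mathcal{j}_1+\mathcal{j}_2-n$.

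The key combinatorial step is to identify precisely the set of $n$ for which $g_n(\mathcal{j}_2)=0$. Writing $\mathcal{j}_2=\frac{m_2-1}{2}-ls_2$, a factor vanishes iff $(j-s_2)l=\frac{i+n-m_2+1}{2}$ with $(i,j)$ in the index set. The obvious solution is $j=s_2$, $i=m_2-1-n$, whose parity condition $i+n+j\equiv m_2+s_2-1\equiv 0\pmod 2$ is automatic; it lies in the index set iff $0\le s_2\le q-s_1-1$ and $m_1+m_2-p\le n\le m_2-1$. The technical point, and probably the main obstacle, is to rule out the existence of any other solutions with $j\ne s_2$; here I would use the admissibility arithmetic underlying Remark \ref{remj}, splitting on whether $p,q$ are both odd or both even, and showing in the even case that the extra candidate solutions $(j-s_2)\in\{\pm q/2\}$ fail the parity constraint $i+n+j\in 2\mathbb{Z}$ because $(p-q)/2$ is odd. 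This uniqueness reduces the analysis to the single solution above.

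Combining these steps: if $s_1+s_2\ge q$ no $t_2^{\,n}$ survives and the tensor product is zero, giving the vanishing of the fusion rule. Otherwise the surviving indices are exactly $\max\{0,m_1+m_2-p\}\le n\le \min\{m_1-1,m_2-1\}$, each yielding an eigenvalue $\mathcal{j}_1+\mathcal{j}_2-n$. A final check, by substituting $m=m_1+m_2-2n-1$ and $s=s_1+s_2$ and verifying $1\le m\le p-1$, $0\le s\le q-1$, $m+s\equiv 1\pmod 2$, shows each such eigenvalue is an admissible weight with the indicated presentation. Therefore each surviving $n$ contributes a one-dimensional $\mathrm{Hom}$-space to $L(\mathcal{l},\mathcal{j}_1+\mathcal{j}_2-n)(0)$, and no other admissible weight occurs, yielding the claimed fusion rules.
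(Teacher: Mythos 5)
Your proposal is correct and follows essentially the same route as the paper: reduce via Theorem \ref{thmam} to computing $A(L(\mathcal{l},\mathcal{j}_1))\otimes_{A(V)}\mathbb{C}v_{\mathcal{j}_2}$ using Theorems \ref{thmav} and \ref{prop510}, specialize $t_1\mapsto\mathcal{j}_2$, and determine which $t_2^n$ survive by locating the vanishing factors of $f_{\mathcal{j}_1,n}(\mathcal{j}_2,1)$, with the uniqueness of the solution $(i+n+1,j)=(m_2,s_2)$ supplied exactly by the parity/coprimality argument of Remark \ref{remj}. The only cosmetic difference is your explicit final check that each eigenvalue $\mathcal{j}_1+\mathcal{j}_2-n$ is admissible, which the paper leaves implicit.
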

\begin{proof}
For any admissible weight $\mathcal{j}$, let $\mathbb{C}v_{\mathcal{j}}$
be the one-dimensional module for the Lie algebra $\mathbb{C}h$ such that $hv_{\mathcal{j}}=\mathcal{j}v_{\mathcal{j}}$.
From Theorem \ref{thmam}, we need to calculate the $A(L_{\widehat{\mathfrak{g}}}(\mathcal{l},0))$-module
$A(L(\mathcal{l},\mathcal{j}_1))\otimes_{A(L_{\widehat{\mathfrak{g}}}(\mathcal{l},0))}\mathbb{C}v_{\mathcal{j}_2}$.
From Theorem \ref{prop510}, we have
\begin{equation*}
A(L(\mathcal{l},\mathcal{j}_1))\otimes_{A(L_{\widehat{\mathfrak{g}}}(\mathcal{l},0))}\mathbb{C}v_{\mathcal{j}_2}\cong \mathbb{C}[t_1,t_2]/J,
\end{equation*}
where $J$ is the subspace of $\mathbb{C}[t_1,t_2]$ spanned by
\begin{equation*}
  \{\mathbb{C}[t_1,t_2](t_1-\mathcal{j}_2),\mathbb{C}[t_1,t_2]t_2^{m_1}+f_{\mathcal{j}_1,n}(\mathcal{j}_2,1)\mathbb{C}[t_1]t_2^n,n=0,1,\cdots,m_1-1\}.
\end{equation*}
If $\mathcal{j}_2$ does not satisfy the relation $0\leq s_2\leq q-s_1-1$, then
$$f_{\mathcal{j}_1,n}(\mathcal{j}_2,1)= (\prod_{i+n+j\in2\mathbb{Z},0\leq j\leq q-s_1-1, 0\leq i\leq p-m_1-1}(\mathcal{j}_2-\frac{i+n}{2}+jl))\ne0 $$
for $0\leq n\leq m_1-1$.
Thus $A(L(\mathcal{l},\mathcal{j}_1))\otimes_{A(L_{\widehat{\mathfrak{g}}}(\mathcal{l},0))}\mathbb{C}v_{\mathcal{j}_2}=0$,
so that all the corresponding fusion rules are zero.

Suppose $0\leq s_2\leq q-s_1-1$.
Note that $\mathbb{C}[t_1]t_2^n=0$ in $\mathbb{C}[t_1,t_2]/J$ if and only if $f_{\mathcal{j}_1,n}(\mathcal{j}_2,1)\ne0$.
Since $f_{\mathcal{j}_1,n}(\mathcal{j}_2,1)= (\prod_{i+n+j\in2\mathbb{Z},0\leq j\leq q-s_1-1, 0\leq i\leq p-m_1-1}(\mathcal{j}_2-\frac{i+n}{2}+jl))=0$
if and only if $\mathcal{j}_2-\frac{i+n}{2}+jl=0$ for some $i+n+j\in2\mathbb{Z},0\leq j\leq q-s_1-1, 0\leq i\leq p-m_1-1$.
This implies that $1\leq i+n+1\leq p-1$, by Remark \ref{remj} we have $i+n+1=m_2,j=s_2$,
that is $m_1+m_2-p\leq n\leq m_2-1$.
Therefore
\begin{equation}\label{eq000}
\mbox{max}\{0,m_1+m_2-p\}\leq n\leq \mbox{min}\{m_1-1,m_2-1\}.
\end{equation}
Then $\mathbb{C}[t_1]t_2^n\ne0$ in $\mathbb{C}[t_1,t_2]/J$ if and only if $n$ satisfies (\ref{eq000}).
Thus
\begin{equation*}
\mathbb{C}[t_1,t_2]/J\cong \bigoplus_{n={\rm max}\{0,m_1+m_2-p\}}^{{\rm min}\{m_1-1,m_2-1\}}\mathbb{C}t_2^n.
\end{equation*}
From Proposition \ref{propavam},
we have $t\ast t_2^n=(\mathcal{j}_1+\mathcal{j}_2-n)t_2^n$,
then we complete the proof.
\end{proof}

If $\mathcal{l}\in\mathbb{Z}_+$,
from Proposition \ref{propadm}, $q=1, p=2\mathcal{l}+3$, then for any admissible weights $\mathcal{j}_i=\frac{m_i-1}{2}-ls_i~(i=1,2)$,
we have $s_i=0, \mathcal{j}_i=\frac{m_i-1}{2}$, hence $0\leq s_2\leq q-s_1-1$ holds automatically.
Since $m_1+m_2-p=2\mathcal{j}_1+2\mathcal{j}_2-2\mathcal{l}-1$, then by Proposition \ref{thm47} we have the following corollary.

\begin{cor}\label{thm48}
Suppose $\mathcal{l}\in\mathbb{Z}_+$.
For any admissible weight $\mathcal{j}_1$ and $\mathcal{j}_2$, we have
\begin{equation}
L(\mathcal{l},\mathcal{j}_1)\boxtimes L(\mathcal{l},\mathcal{j}_2)
=\sum_{ \mathcal{j}=|\mathcal{j}_1-\mathcal{j}_2|,\mathcal{j}+\mathcal{j}_1+\mathcal{j}_2\leq 2\mathcal{l}+1}^{\mathcal{j}_1+\mathcal{j}_2}L(\mathcal{l},\mathcal{j}).
\end{equation}
\end{cor}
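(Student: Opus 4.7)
The plan is to derive this corollary as a direct specialization of Proposition \ref{thm47} to the case $\mathcal{l}\in\mathbb{Z}_+$, which amounts to a bookkeeping argument once the admissibility data is translated into the language of the $\mathcal{j}_i$'s.

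First I would unpack the admissibility parameters. By Proposition \ref{propadm}(1), writing $\mathcal{l}+\tfrac{3}{2}=\tfrac{p}{2q}$ with the stated coprimality conditions forces $q=1$ and $p=2\mathcal{l}+3$ when $\mathcal{l}\in\mathbb{Z}_+$. Proposition \ref{propadm}(2) then shows that every admissible weight takes the form $\mathcal{j}_i=\frac{m_i-1}{2}-l s_i$ with $s_i=0$ (since $0\leq s_i\leq q-1=0$) and $m_i$ odd with $1\leq m_i\leq 2\mathcal{l}+2$; in particular $\mathcal{j}_i=\frac{m_i-1}{2}\in\{0,1,\ldots,\mathcal{l}\}$ and $m_i=2\mathcal{j}_i+1$. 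The vanishing hypothesis $0\leq s_2\leq q-s_1-1$ of Proposition \ref{thm47} reads $0\leq s_2\leq 0$, so it is satisfied automatically and the fusion product is never forced to vanish.

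Next I would substitute into the formula of Proposition \ref{thm47}. The summation index $n$ runs from $\max\{0,m_1+m_2-p\}$ to $\min\{m_1-1,m_2-1\}$, and the summand is $L(\mathcal{l},\mathcal{j}_1+\mathcal{j}_2-n)$. Using $m_i=2\mathcal{j}_i+1$ and $p=2\mathcal{l}+3$ one computes
\begin{equation*}
m_1+m_2-p=2\mathcal{j}_1+2\mathcal{j}_2-2\mathcal{l}-1,\qquad \min\{m_1-1,m_2-1\}=2\min\{\mathcal{j}_1,\mathcal{j}_2\}.
\end{equation*}
Then I would perform the change of variable $\mathcal{j}=\mathcal{j}_1+\mathcal{j}_2-n$. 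The inequality $n\leq 2\min\{\mathcal{j}_1,\mathcal{j}_2\}$ becomes $\mathcal{j}\geq \mathcal{j}_1+\mathcal{j}_2-2\min\{\mathcal{j}_1,\mathcal{j}_2\}=|\mathcal{j}_1-\mathcal{j}_2|$; the inequality $n\geq 0$ becomes $\mathcal{j}\leq \mathcal{j}_1+\mathcal{j}_2$; and the inequality $n\geq 2\mathcal{j}_1+2\mathcal{j}_2-2\mathcal{l}-1$ becomes $\mathcal{j}+\mathcal{j}_1+\mathcal{j}_2\leq 2\mathcal{l}+1$. Since $n$ is an integer (both endpoints are integers and $\mathcal{j}_1+\mathcal{j}_2\in\mathbb{Z}$ in this range), $\mathcal{j}$ runs through the claimed set, which yields the displayed formula.

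There is no serious obstacle; the only potential confusion is making sure the integrality conditions line up (both $m_1+m_2-p$ and $\mathcal{j}_1+\mathcal{j}_2$ are integers of the same parity since $m_1,m_2$ are odd and $p$ is odd, so $n$ and $\mathcal{j}$ are genuine integers throughout the range), and checking that when $2\mathcal{j}_1+2\mathcal{j}_2-2\mathcal{l}-1\leq 0$ the additional upper bound $\mathcal{j}+\mathcal{j}_1+\mathcal{j}_2\leq 2\mathcal{l}+1$ is automatic, while in the opposite case it is precisely what truncates the classical tensor-product range $|\mathcal{j}_1-\mathcal{j}_2|\leq \mathcal{j}\leq \mathcal{j}_1+\mathcal{j}_2$.
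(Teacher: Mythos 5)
Your proposal is correct and follows exactly the route the paper takes: specialize Proposition \ref{thm47} using $q=1$, $p=2\mathcal{l}+3$, $s_i=0$, $m_i=2\mathcal{j}_i+1$, and reindex the sum via $\mathcal{j}=\mathcal{j}_1+\mathcal{j}_2-n$. Your write-up is in fact slightly more careful than the paper's one-line derivation (e.g.\ checking that $q=1$ is forced by the coprimality condition and that the bound $0\leq s_2\leq q-s_1-1$ is automatic).
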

\begin{rem}
{\rm (i) Since $L^\prime(-1)=L(-1)$ the fusion rules among the weak modules
with respect two different vertex operator superalgebra structure of $L_{\widehat{\mathfrak{g}}}(\mathcal{l},0)$
are same. Thus the fusion rules obtained in the Proposition \ref{thm47} and Corollary \ref{thm48} are also those with respect to
the old vertex operator superalgebra structure.\\
(ii) The fusion rules among the irreducible relaxed highest-weight modules over $L_{\widehat{\mathfrak{g}}}(\mathcal{l},0)$
with admissible level
have been calculated in \cite{CKLR}
by viewing $L_{\widehat{\mathfrak{g}}}(\mathcal{l},0)$ as extensions of the tensor product of certain
simple Virasoro vertex operator algebra and simple affine vertex operator algebra associated to $sl_2$.}
\end{rem}


\begin{thebibliography}{99}
\bibitem{A}
T. Arakawa,
Rationality of admissible affine vertex algebras in the category $\mathcal{O}$,
{\em Duke Math. J.} {\bf 165} (2016), no. 1, 67-93.

\bibitem{AV}
T. Arakawa, J. Van Ekeren,
Modularity of relatively rational vertex algebras and fusion rules of principal affine $W$-algebras,
{\em Comm. Math. Phys.} {\bf 370} (2019), no. 1, 205-247.


\bibitem{AM}
D. Adamovi\'{c}, A. Milas,
Vertex operator algebras associated to modular invariant representations for $A_1^{(1)}$,
{\em Math. Res. Lett.} {\bf 2} (1995), no. 5, 563-575.




\bibitem{CGL}
T. Creutzig, N. Genra, A. Linshaw,
Category $\mathcal{O}$ for vertex algebras of $\mathfrak{osp}_{1|2n}$, 2022,
arXiv:math.RT/2203.08188.

\bibitem{CKLR}
T. Creutzig, S. Kanade, T. Liu, D. Ridout,
Cosets, characters and fusion for admissible-level $\mathfrak{osp}(1|2)$ minimal models,
{\em Nuclear Phys. B} {\bf 938} (2019), 22-55.


\bibitem{DLM1}
C. Dong, H. Li, G. Mason,
Regularity of rational vertex operator algebras,
{\em Adv. Math.} {\bf 132} (1997), no. 1, 148-166.


\bibitem{DLM2}
C. Dong, H. Li, G. Mason,
Vertex operator algebras associated to admissible representations of $\hat{sl}_2$,
{\em Comm. Math. Phys.} {\bf 184} (1997), no. 1, 65-93.


\bibitem{DZ2}
C. Dong, Z. Zhao,
Modularity of trace functions in orbifold theory for $\mathbb{Z}$-graded vertex operator superalgebras,
Moonshine: the first quarter century and beyond, 128-143,
London Math. Soc. Lecture Note Ser., 372, Cambridge Univ. Press, Cambridge, 2010.


\bibitem{DK}
A. De Sole, V. Kac,
Finite vs affine $W$-algebras,
{\em Jpn. J. Math.} {\bf 1} (2006), no. 1, 137-261.


\bibitem{V}
J. Van Ekeren,
Modular invariance for twisted modules over a vertex operator superalgebra,
{\em Comm. Math. Phys.} {\bf 322} (2013), no. 2, 333-371.

\bibitem{FHL}
I. Frenkel, Y. Huang, J. Lepowsky,
On axiomatic approaches to vertex operator algebras and modules,
{\em Mem. Am. Math. Soc.} {\bf 104} (1993).

\bibitem{FZ}
I. Frenkel, Y. Zhu,
Vertex operator algebras associated to representations of affine and Virasoro algebras,
{\em Duke Math. J.} {\bf 66} (1992), no. 1, 123-168.



\bibitem{GS}
M. Gorelik, V. Serganova,
Snowflake modules and Enright functor for Kac-Moody superalgebras,
{\em Algebra Number Theory} {\bf 16} (2022), no. 4, 839-879.

\bibitem{IK}
K. Iohara, Y. Koga,
Fusion algebras for $N=1$ superconformal field theories through coinvariants $\textrm{2}$: $\widehat{\mathfrak{osp}}(1|2)$-symmetry,
{\em J. reine angew. Math.} {\bf 531}(2001), 1-34.

\bibitem{K}
V. Kac, Lie superalgebras,
{\em Adv. Math.} {\bf 26} (1977), 8-96.

\bibitem{KK}
V. Kac, D. Kazhdan,
Highest weight representations for affine Lie algebras,
{\em Adv. Math.} {\bf 34} (1979), 97-108.

\bibitem{KW}
V. Kac, M. Wakimoto,
Modular invariant representations of infinite-dimensional Lie algebras and superalgebras,
{\em Proc. Nat. Acad. Sci. U.S.A.} {\bf 85} (1988), no. 14, 4956-4960.


\bibitem{KW2}
V. Kac, M. Wakimoto,
Classification of modular invariant representations of affine algebras,
Infinite-dimensional Lie algebras and groups (Luminy-Marseille, 1988), 138-177,
Adv. Ser. Math. Phys., 7, World Sci. Publ., Teaneck, NJ, 1989.

\bibitem{KWa}
V. Kac, W. Wang,
Vertex operator superalgebras and their representations,
Mathematical aspects of conformal and topological field theories and quantum groups (South Hadley, MA, 1992), 161-191,
Contemp. Math., 175, Amer. Math. Soc., Providence, RI, 1994.

\bibitem{L}
H. Li,
Local systems of vertex operators, vertex superalgebras and modules,
{\em J. Pure Appl. Algebra} {\bf 109} (1996), no.2, 143-195.

\bibitem{Li2}
H. Li,
Determining fusion rules by $A(V)$-modules and bimodules,
{\em J. Algebra} {\bf 212} (1999), no. 2, 515-556.

\bibitem{LL}
J. Lepowsky, H. Li,
Introduction to vertex operator algebras and their representations,
Progress in Math., Vol. 227, Birkh\"auser, Boston, 2004.


\bibitem{Lin}
X. Lin,
Rationality of affine vertex operator superalgebras with rational conformal weights,
{\em Comm. Math. Phys.} {\bf 402} (2023), no. 3, 2765-2790.

\bibitem{MFF}
F. Malikov, B. Feigin, D. Fuks,
Singular vectors in Verma modules over Kac-Moody algebras,
{\em J. Funct. Anal. Appl.} {\bf 20} (1986), no. 2, 25-37.




\bibitem{W}
S. Wood,
Admissible level $\mathfrak{osp}(1|2)$ minimal models and their relaxed highest weight modules,
{\em Transform. Groups} {\bf 25} (2020), no. 3, 887-943.

\end{thebibliography}
\end{document}